\documentclass{article}

\newcommand{\bA}{{\mathbf{A}}}

\newcommand{\bB}{{\mathbf{B}}}

\newcommand{\bC}{{\mathbf{C}}}

\newcommand{\cD}{{\mathcal{D}}}

\newcommand{\bfe}{{\mathbf{e}}}

\newcommand{\cE}{{\mathcal{E}}}

\newcommand{\bF}{{\mathbf{F}}}
\newcommand{\cF}{{\mathcal{F}}}

\newcommand{\bG}{{\mathbf{G}}}
\newcommand{\cG}{{\mathcal{G}}}

\newcommand{\bH}{{\mathbf{H}}}

\newcommand{\bI}{{\mathbf{I}}}
\newcommand{\cI}{{\mathcal{I}}}

\newcommand{\bJ}{{\mathbf{J}}}
\newcommand{\cJ}{{\mathcal{J}}}

\newcommand{\cK}{{\mathcal{K}}}

\newcommand{\cL}{{\mathcal{L}}}

\newcommand{\bM}{{\mathbf{M}}}

\newcommand{\cN}{{\mathcal{N}}}

\newcommand{\cO}{{\mathcal{O}}}

\newcommand{\bP}{{\mathbf{P}}}

\newcommand{\bQ}{{\mathbf{Q}}}

\newcommand{\bR}{{\mathbf{R}}}

\newcommand{\bS}{{\mathbf{S}}}
\newcommand{\cS}{{\mathcal{S}}}

\newcommand{\cT}{{\mathcal{T}}}

\newcommand{\bU}{{\mathbf{U}}}

\newcommand{\cU}{{\mathcal{U}}}

\newcommand{\bW}{{\mathbf{W}}}

\newcommand{\bX}{{\mathbf{X}}}

\newcommand{\by}{{\mathbf{y}}}

\newcommand{\bY}{{\mathbf{Y}}}

\newcommand{\bz}{{\mathbf{z}}}

\newcommand{\bpi}{{\boldsymbol{\pi}}}

\newcommand{\s}{\sigma}

\newcommand{\bTh}{{\boldsymbol{\Theta}}}

\newcommand{\bPi}{{\boldsymbol{\Pi}}}

\newcommand{\expect}{\mathbb{E}}
\newcommand{\reals}{\mathbb{R}}
\newcommand{\ints}{\mathbb{Z}}
\newcommand{\compls}{\mathbb{C}}
\newcommand{\nats}{\mathbb{N}}

\newcommand{\ip}[2]{\left\langle{#1},{#2}\right\rangle}

\newcommand{\ceil}[1]{\lceil{#1}\rceil}
\newcommand{\floor}[1]{\lfloor{#1}\rfloor}

\newcommand{\avg}{\text{avg}}

\usepackage{arxiv}
\usepackage[applemac]{inputenc} 		
\usepackage[T1]{fontenc}    		
\usepackage[colorlinks]{hyperref}      
\usepackage{url}            			
\usepackage{amsthm}
\usepackage{booktabs}       		
\usepackage{amsfonts}       		
\usepackage{amsmath}
\usepackage{nicefrac}       		
\usepackage{microtype}      		
\usepackage{lipsum}				
\usepackage[square,numbers]{natbib}
\usepackage{mathtools}
\usepackage{algpseudocode}
\usepackage{graphicx}
\usepackage{indentfirst,latexsym,bm}
\usepackage{amsmath}
\usepackage{amssymb}
\usepackage{xcolor}
\usepackage{comment}
\usepackage{enumitem}
\usepackage{dsfont}
\usepackage{amssymb}

\usepackage{algorithm}
\usepackage{algorithmicx}
\usepackage{float}
\usepackage{placeins}
\usepackage{subfig}
\usepackage{caption}
\usepackage{enumitem}
\usepackage{todonotes}
\usepackage{makecell}
\usepackage[flushleft]{threeparttable} 

\usepackage{tikz}
\usetikzlibrary{arrows.meta,positioning}
\usepackage{pgfplots}
\pgfplotsset{compat=newest}
\usepgfplotslibrary{fillbetween}
\usetikzlibrary{shapes,decorations}
\usetikzlibrary{fit}

\colorlet{color1}{blue}
\colorlet{color2}{red!50!black}

\definecolor{ivory}{RGB}{218,215,203}

\definecolor{cuhkp}{RGB}{98,56,105} 	
\definecolor{cuhkpl}{RGB}{152,24,147} 	
\definecolor{cuhkb}{RGB}{219,160,1} 	
\definecolor{cuhkbd}{RGB}{178,129,0} 	
\definecolor{cuhkr}{RGB}{88,35,155}  	

\usepackage{hyperref}[6.83]
\hypersetup{
  colorlinks=true,
  frenchlinks=false,
  pdfborder={0 0 0},
  naturalnames=false,
  hypertexnames=false,
  breaklinks,
  allcolors = cuhkbd,
  urlcolor = color2,
}

\RequirePackage[capitalize,nameinlink]{cleveref}[0.19]

\crefname{section}{section}{sections}
\crefname{subsection}{subsection}{subsections}

\Crefname{figure}{Figure}{Figures}

\crefformat{equation}{\textup{#2(#1)#3}}
\crefrangeformat{equation}{\textup{#3(#1)#4--#5(#2)#6}}
\crefmultiformat{equation}{\textup{#2(#1)#3}}{ and \textup{#2(#1)#3}}
{, \textup{#2(#1)#3}}{, and \textup{#2(#1)#3}}
\crefrangemultiformat{equation}{\textup{#3(#1)#4--#5(#2)#6}}%
{ and \textup{#3(#1)#4--#5(#2)#6}}{, \textup{#3(#1)#4--#5(#2)#6}}{, and \textup{#3(#1)#4--#5(#2)#6}}

\Crefformat{equation}{#2Equation~\textup{(#1)}#3}
\Crefrangeformat{equation}{Equations~\textup{#3(#1)#4--#5(#2)#6}}
\Crefmultiformat{equation}{Equations~\textup{#2(#1)#3}}{ and \textup{#2(#1)#3}}
{, \textup{#2(#1)#3}}{, and \textup{#2(#1)#3}}
\Crefrangemultiformat{equation}{Equations~\textup{#3(#1)#4--#5(#2)#6}}%
{ and \textup{#3(#1)#4--#5(#2)#6}}{, \textup{#3(#1)#4--#5(#2)#6}}{, and \textup{#3(#1)#4--#5(#2)#6}}

\crefdefaultlabelformat{#2\textup{#1}#3}

\theoremstyle{plain}
\newtheorem{theorem}{Theorem}[section]
\newtheorem{thm}{Theorem}[section]
\newtheorem{lemma}[thm]{Lemma}
\newtheorem{corollary}[thm]{Corollary}

\newtheorem{remark}{Remark}[section]
\newtheorem{assumption}[thm]{Assumption}
\theoremstyle{plain}

\newcommand{\prt}[1]{\left(#1\right)}
\newcommand{\brk}[1]{\left[#1\right]}
\newcommand{\crk}[1]{\left\{#1\right\}}

\newcommand{\norm}[1]{\left\|#1\right\|}

\usepackage{bbm}
\newcommand{\mone}{\mathbbm{1}}

\newcommand{\T}{\top}

\title{Optimal Network Dependence in Distributed Stochastic Optimization via Tree Routing}

\author{%
  Runze You\\
  School of Data Science\\
  The Chinese University of Hong Kong, Shenzhen \\
  \texttt{runzeyou@link.cuhk.edu.cn} \\
  \And
  Shi Pu \\
  School of Data Science \\
  The Chinese University of Hong Kong, Shenzhen \\
  \texttt{pushi@cuhk.edu.cn} \\
}
\date{}

\begin{document}
\maketitle

\begin{abstract}
Communication is a central bottleneck in distributed optimization, but its
effect is often summarized by the spectral gap of a chosen mixing matrix.
Since this gap depends on link weights as well as topology, it can obscure the
intrinsic effect of the network. We clarify this distinction by relating graph
diameter to the inverse spectral gap. We establish a universal one-sided bound
that is tight up to constant factors, while constructions across several graph
families show that no topology-only converse or universal two-sided scaling
law exists. This motivates a class-based view of network optimality:
diameter-dependent guarantees can be assessed uniformly over graph classes,
whereas spectral-gap optimality requires both the graph--matrix pair class and
the admissible algorithm class to be specified. Based on the resulting
diameter-based minimax benchmark, we introduce Tree-Routed Gradient Tracking
(Tree-RGT), a distributed method for stochastic nonconvex optimization. The
method pipelines model dissemination and gradient aggregation over a rooted
shortest-path spanning tree. Each iteration uses one round of one-hop
communication and one stochastic-gradient evaluation per agent, without inner
consensus or multi-gossip. Under smoothness and unbiased bounded-variance
oracles, Tree-RGT achieves uniformly optimal network dependence over connected
graphs of prescribed size and diameter. It recovers centralized stochastic
scaling after $\cO(nD_{\cG}^2)$ transient iterations, where $n$ is the number
of agents and $D_{\cG}$ is the graph diameter. This network-dependent
transient-iteration bound improves upon or matches those reported for
representative distributed methods. Thus, topology-aware routing attains
diameter-optimal guarantees without relying on a prescribed mixing matrix.
\end{abstract}

\section{Introduction}
\label{sec:introduction}
Consider $n$ agents connected by a fixed, connected, undirected communication
graph $\cG=(\cN,\cE)$. Agent $i$ has access only to a local stochastic
objective $f_i:\reals^p\to\reals$ and can communicate directly only with its
one-hop neighbors. The agents seek to solve
\begin{equation}
    \label{eq:obj}
    \min_{x\in\reals^p}
    \quad
    f(x):=\frac{1}{n}\sum_{i=1}^n f_i(x),
\end{equation}
where
$f_i(x):=\expect_{\xi_i\sim\cD_i}[F_i(x;\xi_i)]$ and $\cD_i$ is the local
data distribution at agent $i$. Problem~\eqref{eq:obj} encompasses
heterogeneous stochastic optimization and distributed empirical risk
minimization, since the local distributions need not be identical. Problems of
this form arise in distributed estimation and large-scale learning, where
distributed training can alleviate the communication and robustness
bottlenecks of a central coordinator
\cite{nedic2018network,lian2017can,yuan2022decentralized}. The stochastic
first-order oracle used by each agent is specified formally in
Assumption~\ref{a.var}.

Given the graph $\cG$, distributed methods such as decentralized stochastic
gradient descent (DSGD)
\cite{lian2017can,pu2021sharp,yuan2023removing}, decentralized stochastic
gradient tracking (DSGT)
\cite{pu2021distributed,koloskova2021improved,alghunaim2022unified}, and
Push--Pull \cite{pu2020push,you2026stochastic,liang2025linear} update local
states by exchanging messages with neighboring agents. The agents assign
weights to the states they retain, transmit, or receive. When these
coefficients are collected across the network, they form one or more
mixing matrices whose off-diagonal supports are constrained by $\cG$. Much of
the literature takes these matrices as part of the problem specification. For
methods over undirected graphs that repeatedly apply a symmetric doubly
stochastic matrix $\bW$, network-dependent terms are commonly expressed
through the inverse spectral gap $(1-\lambda_{\bW})^{-1}$, where
$\lambda_{\bW}$ is the second-largest eigenvalue modulus. This quantity also
appears in transient-iteration bounds that identify when a decentralized
stochastic method begins to match the iteration and agent scaling of
centralized mini-batch SGD \cite{pu2020asymptotic,pu2021sharp,
koloskova2021improved,
alghunaim2022unified,yuan2023removing}.

Considerable effort has therefore been devoted to improving the dependence on
the inverse spectral gap. In deterministic smooth strongly convex optimization,
Optimal Gradient Tracking uses loopless Chebyshev acceleration within a
single-loop recursion \cite{song2024optimal}. Distributed Stochastic Momentum
Tracking combines this mechanism with momentum tracking for stochastic
nonconvex optimization and uses one neighbor exchange per iteration, without
an inner communication loop or the accumulation of a large stochastic-gradient
batch between successive updates \cite{huang2026accelerated}. A different strategy
uses several consensus rounds for each effective optimization update, through
accelerated multi-gossip \cite{scaman2017optimal,yuan2022revisiting} or
factorized finite-time averaging \cite{lu2021optimal}. Such schemes can reduce
disagreement per update, but they separate the communication-round count from
the number of completed updates.

The spectral gap is not the only network descriptor used in decentralized
optimization. For directed communication, analyses of Push-DIGing-type
methods combine a generalized spectral gap with the equilibrium skewness of a
column-stochastic matrix \cite{liang2025understanding}; analogous effective
metrics have recently been developed for row-stochastic matrices
\cite{liang2025row}. Routing-based methods expose graph distances more
directly: STPP propagates information over two directed spanning trees and
expresses its network dependence through their maximum and average root
distances \cite{you2025distributed}. For time-varying directed networks,
one-step contraction parameters have been derived, under strong connectivity
of every instantaneous graph, explicitly in terms of graph diameter, maximal
edge-utility (a shortest-path congestion measure), and the mixing weights
\cite{nedic2025ab}. These examples show that the appropriate network quantity
depends on both the communication primitive and the network model.

Several works further ask whether an algorithm has an optimal dependence on
such quantities. The answer is necessarily relative to a comparison class.
Spectral lower bounds and accelerated algorithms have been developed for
matrix-constrained classes with prescribed spectral connectivity
\cite{scaman2017optimal,yuan2022revisiting,liang2025understanding}, whereas a
tight stochastic nonconvex minimax complexity has been established over graph
classes with prescribed size and diameter \cite{lu2021optimal}. The latter is
a statement about physical graphs and one-hop information propagation; the
former statements additionally restrict the weighted communication operator
available to an algorithm. Calling either dependence ``network optimal''
without identifying the network class and admissible algorithm class can
therefore conflate distinct minimax problems.

This distinction leads to the central question of this paper:
\begin{center}
    \emph{Can a distributed stochastic method use one round of one-hop
    communication per iteration and still achieve uniformly optimal network
    dependence over a prescribed graph class?}
\end{center}

We answer this question in three steps. First, we separate graph invariants
from matrix-dependent quantities
by establishing a universal one-sided diameter--inverse-gap relation and
demonstrating the range of possible scalings. Second, we revisit what uniform
network optimality means over graph classes and over graph--matrix pair
classes. Third, we develop Tree-Routed Gradient Tracking (Tree-RGT), which
pipelines model dissemination and gradient aggregation on a rooted
shortest-path spanning tree. Tree-RGT uses one communication round and one
stochastic-gradient evaluation per agent at each iteration, requires no inner
consensus or multi-gossip phase, and achieves uniformly optimal network
dependence over connected undirected graph classes of prescribed size and
diameter. With problem-dependent quantities fixed, it reaches centralized
stochastic scaling after $\cO(nD_{\cG}^2)$ transient iterations.

\subsection{Related Work}
\label{sec:related-work}

The most relevant literature falls into three lines: network descriptors used
in convergence analysis, methods that couple one communication round to each
update, and complexity-optimal methods based on multi-round or phase-separated
communication.

\textbf{Network descriptors and graph--matrix relations.}
Classical consensus and decentralized optimization analyses quantify
communication through products of stochastic matrices
\cite{nedic2009distributed,nedic2018network}. On a fixed undirected graph, the
choice of admissible weights directly controls the mixing time, and optimizing
these weights is itself a well-studied problem
\cite{xiao2004fast,boyd2004fastest}.

Directed networks require additional descriptors because the stationary
distribution need not be uniform. Push-sum and Push--Pull methods use
column-stochastic or coupled row- and column-stochastic matrices
\cite{xi2017add,pu2020push,xin2020general}. A recent stochastic Push--Pull
analysis uses infinite sums of matrix-norm terms to encompass several existing
algorithms, although the meaning of these quantities remains unclear
\cite{you2026stochastic}. Other recent
complexity analyses identify equilibrium skewness, together with generalized
spectral gaps, as an effective measure of the resulting imbalance
\cite{liang2025understanding,liang2025row}. For time-varying networks, a common
model instead assumes connectivity of the union graph over every fixed-length
window \cite{nedic2015distributed,nedic2017achieving}. A more explicit
contraction analysis relates time-varying Push--Pull matrices to graph diameter
and maximal edge-utility \cite{nedic2025ab}. These quantities address distinct
directed or time-varying network models; analyses of static undirected networks
more commonly begin with a prescribed symmetric mixing matrix.

\textbf{One-communication-per-update and routing-based methods.}
DSGD \cite{lian2017can}, gradient-tracking methods such as DIGing
\cite{nedic2017achieving} and DSGT \cite{pu2021distributed}, and
correction-based methods such as EXTRA \cite{shi2015extra}, $D^2$
\cite{tang2018d}, and EDAS \cite{huang2022improving} interleave local
computation with one neighbor exchange per update. Their network-dependent
transient-iteration bounds have been progressively sharpened
\cite{pu2020asymptotic,pu2021sharp,koloskova2021improved,
alghunaim2022unified,yuan2023removing}. OGT \cite{song2024optimal} and DSMT
\cite{huang2026accelerated} further improve spectral-gap dependence through
loopless Chebyshev acceleration without an inner gossip loop.

Routing offers another single-round mechanism. RelaySum propagates delayed
information on a spanning tree \cite{vogels2021relaysum}, whereas BTPP 
\cite{you2024b} and STPP \cite{you2025distributed} route models and
gradient trackers through designed trees. Finite-time methods instead rely on
structured communication-matrix sequences. DSGD-CECA is built around the
prescribed CECA sequence and a fully controllable communication topology
\cite{ding2023dsgd}. Finite-time-consensus gradient tracking gives explicit
sequences only for selected topology families; no general construction within
an arbitrary prescribed graph is known, and its convergence guarantee depends
explicitly on the sequence length, which is likewise unavailable without such
a construction \cite{nguyen2025graphs}.

\textbf{Multi-round communication and complexity-optimal methods.}
Under prescribed symmetric-matrix models, accelerated gossip yields optimal
spectral-gap, communication, and gradient complexities for deterministic
smooth strongly convex problems
\cite{scaman2017optimal,kovalev2020optimal}. In stochastic smooth nonconvex
optimization, MG-DSGD \cite{yuan2022revisiting} and DeTAG \cite{lu2021optimal} 
both use multi-step Chebyshev-accelerated
gossip based on a prescribed matrix between effective updates. Both combine it
with gradient accumulation, while DeTAG additionally maintains a gradient
tracker. Their corresponding bounds match the prescribed-matrix lower bounds
up to logarithmic factors. Directed extensions include MG-Push-DIGing, based
on multi-step gossip with a prescribed column-stochastic matrix
\cite{liang2025understanding}, and MG-PULL-DIAG-GT, its row-stochastic
counterpart \cite{liang2025row}. For stochastic convex optimization, DDA-SGD
combines minibatching with accelerated gossip \cite{kluger2026nearoptimal},
whereas MG-ADSGD couples minibatching with multi-round accelerated gossip
\cite{sun2026accelerated}.

DeFacto \cite{lu2021optimal} instead attains the tight diameter-minimax scaling
by factorizing exact averaging into graph-dependent communication matrices,
while treating this factorization as a black-box preprocessing step and
separating computation from communication.

\subsection{Main Contributions}
We make three main contributions:

\begin{itemize}
    \item \textbf{Graph invariants versus matrix-dependent spectral
    quantities.}
    For graph--matrix pairs $(\cG,\bW)$, we distinguish the graph invariants
    $n$ and $D_{\cG}$ from the matrix-dependent spectral gap
    $1-\lambda_{\bW}$ and, when $\lambda_{\bW}<1$, its inverse
    $\tau_{\bW}:=(1-\lambda_{\bW})^{-1}$. To the best of our knowledge, we
    establish the first universal one-sided bound
    \(
        \tau_{\bW}\ge\prt{D_{\cG}-1}/\log n,
    \)
    which lower-bounds the inverse spectral gap using only graph invariants and
    is tight up to constant factors on constant-degree expander families.
    Exact analyses of representative graph--matrix pairs from the ring,
    undirected exponential, and balanced double-star families exhibit
    fundamentally different asymptotic relations between $D_{\cG}$ and
    $\tau_{\bW}$; the two double-star weightings further quantify the effect of
    topology-specific reweighting. Moreover, for every fixed $p>0$ and every
    $D_0\ge1$, we construct a graph--matrix pair $(\cG^*,\bW^*)$ satisfying
    $D_{\cG^*}\ge D_0$ and $\tau_{\bW^*}=D_{\cG^*}^{p}$, thereby ruling out a
    universal two-sided scaling law determined by topology alone.

    \item \textbf{Uniform network optimality over the appropriate network
    class.}
    We clarify that network optimality is relative to the class over which
    uniformity is required. Over the graph class $\mathsf{G}_{n,\hat D}$ of
    connected undirected graphs with $n$ nodes and diameter $\hat D$, we define
    a method to have \emph{uniformly optimal network dependence} if it admits
    a round-complexity upper bound that holds uniformly over the class and matches
    the diameter-based minimax powers of $n$ and $\hat D$. Claims of uniformly
    optimal inverse-gap dependence instead pose a different problem that is not
    fully specified until both a graph--matrix pair class and the admissible
    algorithms' access to its designated matrix are fixed. Weighted-path
    specializations show that the inverse-gap exponent appearing in the
    resulting lower bound can change with the pair class, while our
    graph--matrix relations show why choosing a nonempty and informative
    parameter regime for $(n,\hat D,\hat\tau)$ requires care. We formulate one
    possible fixed-mixing minimax model and leave its general characterization
    open.

    \item \textbf{Tree-RGT with uniformly optimal network dependence.}
    We develop Tree-Routed Gradient Tracking (Tree-RGT), based on a one-time
    distributed construction of a rooted shortest-path spanning tree and a
    root-to-agent-to-root pipeline. Each iteration uses one round of one-hop
    tree communication, one stochastic-gradient evaluation per agent, and one
    local update; only the root performs a descent update, and no inner
    consensus or multi-gossip phase is required. To the best of our knowledge,
    Tree-RGT is the first distributed stochastic method to achieve
    \emph{uniformly optimal network dependence} while using one communication
    round per update. Specifically, for smooth nonconvex objectives and unbiased
    bounded-variance stochastic oracles, this guarantee holds over
    $\mathsf{G}_{n,\hat D}$, with problem-dependent quantities fixed. Tree-RGT reaches
    the centralized stochastic scaling after $\cO(nD_{\cG}^2)$ transient
    iterations, improving upon or matching the network-dependent
    transient-iteration bounds reported for the representative methods in
    Tables~\ref{tab:one-communication-comparison}
    and~\ref{tab:multi-communication-comparison}.
\end{itemize}

\begin{table}[ht]
    \centering
    \small
    \begin{threeparttable}
        \setlength{\tabcolsep}{5pt}
        \renewcommand{\arraystretch}{1.30}
        \begin{tabular*}{\textwidth}{@{\extracolsep{\fill}}lcccc@{}}
            \toprule
            Method
            & \makecell{Transient\\iterations\tnote{(a)}}
            & \makecell{Ring graph\tnote{(b)}\\($\bW_{\mathrm{r}}$)}
            & \makecell{Exponential graph\\($\bW_{\mathrm{ex}}$)}
            & \makecell{Double-star graph\\($\bW_{\mathrm{ds}}$)} \\
            \midrule
            DSGD \cite{lian2017can}
            & $n^3\tau_{\bW}^4$
            & $n^{11}$
            & $n^3\log^4 n$
            & $n^{11}$ \\
            DSGT \cite{alghunaim2022unified}
            & $\max\crk{n^3\tau_{\bW}^2,n\tau_{\bW}^{8/3}}$
            & $n^{7}$
            & $n^3\log^2 n$
            & $n^{7}$ \\
            DSMT \cite{huang2026accelerated}
            & $n^{5/3}\tau_{\bW}$
            & $n^{11/3}$
            & $n^{5/3}\log n$
            & $n^{11/3}$ \\
            RelaySum \cite{vogels2021relaysum}
            & $n^3D_\cG^4$
            & $n^7$
            & $n^3\log^4 n$
            & $n^3$ \\
            STPP\tnote{(c)} \cite{you2025distributed}
            & $nD_\cG^6$
            & $n^7$
            & $n\log^6 n$
            & $n$ \\
            \midrule
            \makecell[l]{\textbf{Tree-RGT}\\\textbf{(this paper)}}
            & $nD_{\cG}^{2}$
            & $n^3$
            & $n\log^2 n$
            & $n$ \\
            \bottomrule
        \end{tabular*}
        \begin{tablenotes}
            \footnotesize
            \item[(a)] Each displayed expression is the network-dependent term
            in an asymptotic upper bound on the number of transient iterations.
            The common $\cO(\cdot)$ notation is omitted; problem- and
            initialization-dependent quantities, treated as constants with
            respect to the network parameters, and method-specific assumptions
            are also suppressed. Here
            $\tau_{\bW}=(1-\lambda_{\bW})^{-1}$.
            \item[(b)] This and the next two columns specialize the
            transient-iteration bounds to the indicated physical graph families.
            Matrix-based methods use the displayed matrices. DSMT uses
            $(\bI+\bW)/2$ when a displayed matrix is not positive semidefinite,
            which preserves the reported asymptotic orders. RelaySum, STPP,
            and Tree-RGT use only the corresponding physical graphs.
            \item[(c)] The STPP bound in \cite{you2025distributed} depends on
            maximum and average root distances in its two routing trees.
            Choosing the two trees as opposite orientations of a rooted
            shortest-path tree bounds these quantities by $D_{\cG}$ and yields
            the conservative network-dependent term $nD_{\cG}^{6}$ reported
            here.
        \end{tablenotes}
        \vspace{4pt}
        \caption{Comparison of the convergence speed of Tree-RGT and
        representative distributed stochastic methods using one communication
        round per update, measured through the network-dependent terms in their
        reported transient-iteration upper bounds for smooth nonconvex
        objectives.}
        \label{tab:one-communication-comparison}
    \end{threeparttable}
\end{table}

\begin{table}[ht]
    \centering
    \small
    \begin{threeparttable}
        \setlength{\tabcolsep}{2pt}
        \renewcommand{\arraystretch}{1.30}
        \begin{tabular*}{\textwidth}{@{\extracolsep{\fill}}lcccccc@{}}
            \toprule
            Method
            & \makecell{Transient\\iterations\tnote{(a)}}
            & \makecell{Ring graph\\($\bW_{\mathrm{r}}$)}
            & \makecell{Exponential graph\\($\bW_{\mathrm{ex}}$)}
            & \makecell{Double-star graph\\($\bW_{\mathrm{ds}}$)}
            & \makecell{UOND\tnote{(b)}}
            & \makecell{One comm. round\\per update\tnote{(c)}} \\
            \midrule
            DeFacto \cite{lu2021optimal}
            & $nD_{\cG}^{2}$
            & $n^3$
            & $n\log^2 n$
            & $n$
            & $\checkmark$
            & $\times$ \\
            DeTAG \cite{lu2021optimal}
            & $n\tau_{\bW}\log n$
            & $n^3\log n$
            & $n\log^2 n$
            & $n^3\log n$
            & $\times$
            & $\times$ \\
            MG-DSGD \cite{yuan2022revisiting}
            & $n\tau_{\bW}\log n$
            & $n^3\log n$
            & $n\log^2 n$
            & $n^3\log n$
            & $\times$
            & $\times$ \\
            MG-Push-DIGing \cite{liang2025understanding}
            & $n\tau_{\bW}^2$
            & $n^5$
            & $n\log^2 n$
            & $n^5$
            & $\times$
            & $\times$ \\
            MG-PULL-DIAG-GT \cite{liang2025row}
            & $n\tau_{\bW}^2$
            & $n^5$
            & $n\log^2 n$
            & $n^5$
            & $\times$
            & $\times$ \\
            \midrule
            \makecell[l]{\textbf{Tree-RGT}\\\textbf{(this paper)}}
            & $nD_{\cG}^{2}$
            & $n^3$
            & $n\log^2 n$
            & $n$
            & $\checkmark$
            & $\checkmark$ \\
            \bottomrule
        \end{tabular*}
        \begin{tablenotes}
            \footnotesize
            \item[(a)] Transient iterations count all synchronous rounds, including both
            computation/update rounds and communication-only rounds. Each displayed
            expression estimates the network-dependent number of such rounds required
            before the reported rate matches the scaling of centralized mini-batch SGD
            with one sample per agent per update ($B=1$). The asymptotic notation and
            parameter-suppression conventions follow
            Table~\ref{tab:one-communication-comparison}.
            \item[(b)] UOND abbreviates \emph{uniformly optimal network
            dependence}. A checkmark means that, with problem-dependent
            quantities fixed, the cited round-complexity bound holds uniformly
            over the corresponding graph class or graph--matrix pair class and
            matches the associated minimax powers of its network parameters.
            A cross means only that the cited guarantee does not establish UOND
            on such a class. The Tree-RGT checkmark refers to
            $\mathsf{G}_{n,\hat D}$.
            \item[(c)] A checkmark means that each stochastic-update iteration
            uses one synchronous neighbor-exchange round and inserts no inner
            consensus or multi-gossip phase between successive updates.
            Multiple state variables may be transmitted concurrently within
            that round.
        \end{tablenotes}
        \vspace{4pt}
        \caption{Comparison of the convergence speed of Tree-RGT and
        representative distributed stochastic methods that insert multi-round
        or phase-separated communication between effective updates, measured
        through their network-dependent transient-iteration estimates for smooth
        nonconvex objectives. The graph-family specializations follow
        Table~\ref{tab:one-communication-comparison}.}
        \label{tab:multi-communication-comparison}
    \end{threeparttable}
\end{table}

\subsection{Notation and Assumptions}
\label{sec:notations}
We collect here the notations and assumptions used throughout the paper.

\textbf{Basic notation.}
For an integer $m\ge1$, let $[m]:=\{1,\ldots,m\}$. Throughout the paper, all
vectors are column vectors. For vectors $a$ and $b$, $\ip{a}{b}=a^\T b$ denotes their
Euclidean inner product and $\norm{a}$ denotes the Euclidean norm. For a vector
$x$, $\brk{x}_q$ denotes its $q$-th entry. For a matrix $\bA$,
$\norm{\bA}_2$ and $\norm{\bA}_F$ denote the spectral and Frobenius norms,
respectively; $\brk{\bA}_{ij}$ and
$\brk{\bA}_{i:}$ denote its $(i,j)$-th entry and its $i$-th row. We use
$\bI_m$, $\mone_m$, and $\mathbf{0}$ for the identity matrix, all-ones vector,
and zero matrix or vector of the indicated dimensions, and omit the subscript
when the dimension is clear.

For two positive sequences $\{a(n)\}_{n\ge1}$ and $\{b(n)\}_{n\ge1}$, we
write $a(n)=\cO(b(n))$ if there exist constants $C>0$ and $n_0\ge1$ such that
$a(n)\le Cb(n)$ for all $n\ge n_0$. Similarly, $a(n)=\Omega(b(n))$ if there
exist constants $c>0$ and $n_0\ge1$ such that $a(n)\ge cb(n)$ for all
$n\ge n_0$. Finally, $a(n)=\Theta(b(n))$ if both $a(n)=\cO(b(n))$ and
$a(n)=\Omega(b(n))$ hold.

\textbf{Graph and communication notation.}
As a standing convention, unless explicitly stated otherwise, every physical
communication graph
$\cG=(\cN,\cE)$ considered in our problem formulation and theoretical results
is static, connected, and undirected, with $\cN=[n]$. The directed-graph
notation below is used only to encode matrix support and to discuss explicitly
identified extensions.

A directed graph $\cG=(\cN,\cE)$ consists of the node set
$\cN=[n]$ and an arc set
$\cE\subseteq\{(j,i):i,j\in\cN,\ i\ne j\}$. The arc $(j,i)\in\cE$ means
that node $j$ can send information directly to node $i$. An undirected
physical communication graph is represented by a symmetric arc set: for every
pair of distinct nodes $i,j\in\cN$,
\[
    (j,i)\in\cE
    \quad\Longleftrightarrow\quad
    (i,j)\in\cE.
\]
Each pair of opposite arcs is identified with one undirected physical link
when discussing paths, distances, or other undirected graph properties.

For a nonnegative matrix $\bA\in\reals^{n\times n}$, define its directed
off-diagonal support graph $\cG_{\bA}=(\cN,\cE_{\bA})$ by
\[
    \cE_{\bA}
    :=
    \bigl\{(j,i):i,j\in\cN,\ i\ne j,\ \brk{\bA}_{ij}>0\bigr\}.
\]
The matrix $\bA$ is \emph{compatible} with $\cG$ if
$\cE_{\bA}\subseteq\cE$, or equivalently if
$\brk{\bA}_{ij}=0$ whenever $i\ne j$ and $(j,i)\notin\cE$. Diagonal entries
represent local state retention and require no physical communication. If
$\bA$ is symmetric, its symmetric arc support is identified with the
corresponding simple undirected support graph.

When $\cG$ is a connected undirected physical communication graph, a matrix
$\bW\in\reals^{n\times n}$ is called a \emph{mixing matrix on $\cG$} if it
is nonnegative, symmetric, and doubly stochastic, and its off-diagonal support
agrees exactly with $\cG$:
\[
    \brk{\bW}_{ij}>0
    \quad\Longleftrightarrow\quad
    (j,i)\in\cE,
    \qquad i\ne j.
\]
Equivalently, $\cE_{\bW}=\cE$, or $\cG=\cG_{\bW}$ after identifying each
pair of opposite arcs with one undirected link. We call $(\cG,\bW)$ a
\emph{graph--matrix pair}. Thus, every mixing matrix on $\cG$ is compatible
with $\cG$, but it additionally assigns positive weight to every physical
link.

\textbf{Algorithmic notation.}
For local variables $x_i^{(t)},y_i^{(t)}\in\reals^p$, we use
$\bX^{(t)},\bY^{(t)}\in\reals^{n\times p}$ as their respective row-wise
stacked variables, with
$\brk{\bX^{(t)}}_{i:}=(x_i^{(t)})^\T$ and
$\brk{\bY^{(t)}}_{i:}=(y_i^{(t)})^\T$. The stochastic-gradient stack
$\bG^{(t)}$ is defined by
\[
    \brk{\bG^{(t)}}_{i:}
    :=g_i(x_i^{(t)},\xi_i^{(t)})^\T.
\]
Here, $g_i(x,\xi_i)$ denotes the stochastic first-order oracle available to
agent $i$, whose properties are specified in Assumption~\ref{a.var}.
For any $\bU\in\reals^{n\times p}$ whose $i$-th row is $u_i^\T$, define the
row-wise gradient mapping $\nabla\bF(\bU)$ by
\[
    \brk{\nabla\bF(\bU)}_{i:}:=\nabla f_i(u_i)^\T.
\]

\textbf{Standing assumptions.}
The local objectives may be nonconvex and satisfy the following regularity
condition.
\begin{assumption}[Smooth objectives]
    \label{a.smooth}
    Each $f_i:\reals^p\to\reals$ is continuously differentiable and has an
    $L$-Lipschitz gradient: for all $x,y\in\reals^p$,
    \[
        \norm{\nabla f_i(x)-\nabla f_i(y)}
        \le L\norm{x-y}.
    \]
    Moreover, the global objective is bounded below:
    $f^\star:=\inf_{x\in\reals^p}f(x)>-\infty$.
\end{assumption}
In particular, the average objective $f=n^{-1}\sum_{i=1}^n f_i$ is also
$L$-smooth.

Each agent accesses its local objective through a stochastic first-order
oracle $g_i(x,\xi_i)$. We use fresh samples at different agents and oracle
calls.
\begin{assumption}[Stochastic first-order oracles]
    \label{a.var}
    For every $i\in\cN$ and $t\ge0$, the sample
    $\xi_i^{(t)}\sim\cD_i$, and the family
    $\{\xi_i^{(t)}:i\in\cN,\ t\ge0\}$ is mutually independent. For every
    $i\in\cN$ and $x\in\reals^p$,
    \[
        \expect_{\xi_i\sim\cD_i}\brk{g_i(x,\xi_i)}
        =\nabla f_i(x),
        \qquad
        \expect_{\xi_i\sim\cD_i}
        \brk{\norm{g_i(x,\xi_i)-\nabla f_i(x)}^2}
        \le\sigma^2
    \]
    for some $\sigma^2>0$.
\end{assumption}
    Unless particular sources of randomness are specified, every expectation
in this paper is taken with respect to all sources of randomness.

\subsection{Organization of the Paper}

The remainder of the paper is organized as follows.
Section~\ref{sec:spec} distinguishes graph invariants from matrix-dependent
spectral quantities and motivates diameter as an intrinsic measure of network
dependence. Section~\ref{sec:lower-bounds} formalizes minimax optimality through
convergence guarantees that hold uniformly over graph classes and recalls the
diameter-dependent complexity bounds. Section~\ref{sec:tree-rgt} constructs the
spanning-tree routing mechanism, introduces Tree-RGT, and states its main
convergence guarantee. Section~\ref{sec:convergence-analysis} develops the
analysis underlying this guarantee. Section~\ref{sec:numerical-experiments}
presents numerical experiments, and Section~\ref{sec:conclusion} concludes the
paper. Supporting proofs are collected in the appendices.

\section{Graph Diameter as an Intrinsic Measure of Network Dependence}
\label{sec:spec}

Convergence rates of decentralized algorithms based on repeated mixing are
often characterized by the spectral gap of a chosen mixing matrix. This
quantity is not a graph invariant: the graph determines which communication
links are available, whereas the mixing matrix assigns weights to those links.
Consequently, different mixing matrices on the same graph can have
substantially different spectral gaps.

To formalize this distinction, consider a graph--matrix pair
$(\cG,\bW)$ as defined in Section~\ref{sec:notations}. The graph component
$\cG=(\cN,\cE)$ has $n=|\cN|$ nodes and diameter
\[
    D_{\cG}
    :=
    \max_{i,j\in\cN}\operatorname{dist}_{\cG}(i,j),
\]
where $\operatorname{dist}_{\cG}(i,j)$ is the length of a shortest path
between $i$ and $j$. The matrix component $\bW$ specifies the mixing weights
on the links of $\cG$. Define
\[
    \lambda_{\bW}:=\norm{\bW-\bJ}_2,
    \qquad
    \tau_{\bW}:=\frac{1}{1-\lambda_{\bW}}
    \quad\text{when }\lambda_{\bW}<1,
\]
where $\bJ=\mone\mone^\T/n$. Since $\bW$ is symmetric and doubly stochastic,
$\lambda_{\bW}$ is its second-largest eigenvalue modulus.
Accordingly, $1-\lambda_{\bW}$ is the absolute spectral gap and
$\tau_{\bW}$ is its inverse. For brevity, we refer to them below as the
spectral gap and the inverse spectral gap, respectively.

The pair therefore carries two different kinds of network descriptors:
$n$ and $D_{\cG}$ depend only on the graph component and are graph invariants,
whereas $\lambda_{\bW}$ and $\tau_{\bW}$ depend on the selected matrix
component. We first show that $\tau_{\bW}$ can change by an arbitrary factor
while $\cG$ remains fixed. We then derive a one-sided constraint in terms of
$n$ and $D_{\cG}$ and illustrate the possible relationships between diameter
and inverse spectral gap through several graph--matrix pairs.

\subsection{The Inverse Spectral Gap Is Not a Graph Invariant}

We begin with the strongest possible fixed-topology separation: even after the
entire support graph has been fixed, the inverse spectral gap can still be
changed by an arbitrary $n$-dependent factor.

\begin{theorem}[Arbitrary inverse-gap separation on a fixed graph]
    \label{thm:matrix-dependent-gap}
    Let $\cG=(\cN,\cE)$ be a connected undirected graph with $n\ge2$ nodes,
    and let $\bW$ be a positive semidefinite mixing matrix on $\cG$ with
    $\lambda_{\bW}<1$. For any $\delta\in(0,1]$, define
    \[
        \bW^{(\delta)}
        :=
        (1-\delta)\bI+\delta\bW.
    \]
    Then $\bW^{(\delta)}$ is a positive semidefinite mixing matrix on $\cG$.
    Moreover,
    \[
        1-\lambda_{\bW^{(\delta)}}
        =
        \delta\prt{1-\lambda_{\bW}},
        \qquad
        \tau_{\bW^{(\delta)}}
        =
        \frac{\tau_{\bW}}{\delta}.
    \]
\end{theorem}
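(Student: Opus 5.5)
The proof splits into two parts: first that $\bW^{(\delta)}$ is again a positive semidefinite mixing matrix on $\cG$, and then an exact spectral computation using the common eigenbasis of $\bW$ and $\bW^{(\delta)}$.

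\textbf{Mixing-matrix properties.} Symmetry is immediate, since $\bW^{(\delta)}$ is a combination of the symmetric matrices $\bI$ and $\bW$. Double stochasticity also follows at once:
\[
    \bW^{(\delta)}\mone=(1-\delta)\mone+\delta\mone=\mone,
\]
and the same holds for the transpose. For nonnegativity, the off-diagonal entries are $\delta\brk{\bW}_{ij}\ge0$. The diagonal entries are $(1-\delta)+\delta\brk{\bW}_{ii}\ge0$ because $\delta\le1$.

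For the support, take $i\ne j$. Since $\delta>0$, we have $\brk{\bW^{(\delta)}}_{ij}=\delta\brk{\bW}_{ij}$, which is positive exactly when $\brk{\bW}_{ij}>0$, that is, exactly when $(j,i)\in\cE$. So the off-diagonal support agrees with $\cG$.

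Positive semidefiniteness holds because $\bW^{(\delta)}$ is a nonnegative combination of the positive semidefinite matrices $\bI$ and $\bW$.

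\textbf{Spectral identity.} Let $\bW=\sum_k\mu_k v_kv_k^\T$ be an orthonormal eigendecomposition with $v_1=\mone/\sqrt n$ and $\mu_1=1$. The matrix $\bW^{(\delta)}$ has the same eigenvectors, with eigenvalues $1-\delta+\delta\mu_k$.

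Because $\bW$ is positive semidefinite, every $\mu_k$ lies in $[0,1]$, and therefore every $1-\delta+\delta\mu_k$ lies in $[1-\delta,1]\subseteq[0,1]$. The same eigenvectors diagonalize $\bJ=v_1v_1^\T$. Hence $\bW-\bJ$ has eigenvalues $0$ and $\mu_k$ for $k\ge2$, so
\[
    \lambda_{\bW}=\max_{k\ge2}\mu_k .
\]
By the same reasoning,
\[
    \lambda_{\bW^{(\delta)}}=\max_{k\ge2}\bigl(1-\delta+\delta\mu_k\bigr)=1-\delta+\delta\lambda_{\bW},
\]
where the last equality uses that the map $\mu\mapsto1-\delta+\delta\mu$ is increasing.

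It follows that $1-\lambda_{\bW^{(\delta)}}=\delta(1-\lambda_{\bW})$. This quantity is positive because $\delta>0$ and $\lambda_{\bW}<1$, so $\tau_{\bW^{(\delta)}}$ is well defined. Inverting the identity gives $\tau_{\bW^{(\delta)}}=\tau_{\bW}/\delta$.

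\textbf{Main subtlety.} The only delicate step is identifying the second-largest eigenvalue modulus with the largest eigenvalue among $k\ge2$. This requires nonnegativity of the eigenvalues, and positive semidefiniteness of $\bW$, together with that of $\bW^{(\delta)}$, is exactly what supplies it. Without this hypothesis, a negative eigenvalue of $\bW$ close to $-1$ could determine $\lambda_{\bW}$, and the clean scaling identity would fail.

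If $n=2$ or some $\mu_k$ is repeated, the maxima above are still taken over the nonempty index set $k\ge2$, so the argument is unaffected.
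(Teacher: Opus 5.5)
Your proof is correct and follows essentially the same route as the paper. Both arguments verify the mixing-matrix properties from the convex combination of $\bI$ and $\bW$, with off-diagonal entries $\delta\brk{\bW}_{ij}$ preserving the support, and then use the shared eigenbasis and nonnegativity of the eigenvalues to obtain $\lambda_{\bW^{(\delta)}}=1-\delta+\delta\lambda_{\bW}$. Your identification of $\lambda_{\bW}$ through the spectrum of $\bW-\bJ$ in a basis containing $\mone/\sqrt{n}$ is a slightly more explicit version of the paper's step $\lambda_{\bW}=\mu_2$.
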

\begin{proof}
    See Appendix \ref{pf:thm:matrix-dependent-gap}.
\end{proof}

The transformation in Theorem \ref{thm:matrix-dependent-gap} changes only the
amount of weight assigned to communication relative to the diagonal self-loop.
It preserves not only connectivity but every off-diagonal zero and nonzero
entry.  
Taking $\delta=n^{-p}$ for any constant $p>0$ in Theorem
\ref{thm:matrix-dependent-gap} gives a matrix with the same support graph as
$\bW$ but with
\[
    \tau_{\bW^{(n^{-p})}}=n^p\tau_{\bW}.
\]
Thus, fixing the network topology does not determine the inverse spectral gap.  
A convergence guarantee expressed through
$\tau_{\bW}$ therefore describes the behavior of the selected weighted mixing
protocol, rather than a property of the support graph alone.  Nevertheless,
the topology still imposes constraints shared by every mixing matrix on
$\cG$. We next show that graph diameter provides such a constraint.

\subsection{A Graph-Invariant Constraint: Diameter}

Theorem \ref{thm:matrix-dependent-gap} naturally raises a second question: if
the inverse spectral gap is not itself a graph invariant, what aspects of it
are forced by the topology?  

The most basic obstruction is finite-speed
information propagation. There exist two nodes at distance $D_{\cG}$, and no
sequence of fewer than $D_{\cG}$ one-hop communications can carry information
from one to the other. This graph-invariant obstruction imposes the following
universal, although one-sided, restriction on every mixing matrix on $\cG$.

\begin{theorem}[Diameter lower bound for the inverse spectral gap]
    \label{l:spectral}
    Let $\cG=(\cN,\cE)$ be a connected undirected graph with $n\ge2$ nodes,
    and let $\bW$ be a mixing matrix on $\cG$ with $\lambda_{\bW}<1$. Then
    \[
    \tau_{\bW}=\frac{1}{1-\lambda_{\bW}} \ge \frac{D_{\cG}-1}{\log n}.
    \]
\end{theorem}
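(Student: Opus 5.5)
Since $\bW$ is supported on $\cG$, powers of $\bW$ cannot carry information faster than one hop per multiplication. Meanwhile $\bW^k$ converges to $\bJ$ at rate $\lambda_{\bW}^k$. The plan is to compare these two facts at a pair of nodes $i,j$ with $\operatorname{dist}_{\cG}(i,j)=D_{\cG}$, and at the time $k=D_{\cG}-1$.

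The argument proceeds in four steps.

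\textbf{Support.} Because $\bW$ is nonnegative with off-diagonal support $\cE$, an induction on $k$ gives $[\bW^k]_{ij}=0$ whenever $\operatorname{dist}_{\cG}(i,j)>k$. The induction step uses $[\bW^{k}]_{ij}=\sum_\ell[\bW^{k-1}]_{i\ell}[\bW]_{\ell j}$. A nonzero term requires $\ell=j$ or $\ell$ adjacent to $j$, and then $\operatorname{dist}(i,\ell)\ge \operatorname{dist}(i,j)-1>k-1$.

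\textbf{Spectrum.} Since $\bW$ is symmetric and doubly stochastic, $\bW\bJ=\bJ\bW=\bJ$. Hence $\bW^k-\bJ=(\bW-\bJ)^k$, so
\[
|[\bW^k]_{ij}-1/n|\le\norm{(\bW-\bJ)^k}_2=\lambda_{\bW}^k .
\]

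\textbf{Combining.} Take $k=D_{\cG}-1$ and $i,j$ at distance $D_{\cG}$. The support step gives $[\bW^k]_{ij}=0$, so $1/n\le\lambda_{\bW}^{D_{\cG}-1}$. If $D_{\cG}=1$ the claim is trivial, so assume $D_{\cG}\ge2$. Taking logarithms gives $\log n\ge (D_{\cG}-1)\log(1/\lambda_{\bW})$. If $\lambda_{\bW}=0$ this already contradicts $n\ge1$ finite, so that case cannot occur when $D_{\cG}\ge2$. Otherwise $\lambda_{\bW}>0$ is fine.

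\textbf{Conversion to the gap.} Use $\log(1/\lambda)\ge 1-\lambda$ for $\lambda\in(0,1)$, which is $-\log\lambda\ge1-\lambda$. Then $\log n\ge (D_{\cG}-1)(1-\lambda_{\bW})$, which rearranges to $\tau_{\bW}\ge (D_{\cG}-1)/\log n$.

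No step looks difficult. The points needing care are that the spectral-norm bound controls the entrywise deviation, and that $1/n$ is exactly the target entry. Both hold because $\bW-\bJ$ is symmetric with norm $\lambda_{\bW}$. The off-by-one choice $k=D_{\cG}-1$ rather than $D_{\cG}$ is what produces $D_{\cG}-1$ in the numerator.
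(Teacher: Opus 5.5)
Your proposal is correct and follows the paper's proof essentially step for step. Both arguments use the vanishing entry $[\bW^{D_{\cG}-1}]_{ij}=0$ for a diametral pair, the identity $\bW^k-\bJ=(\bW-\bJ)^k$ to get $1/n\le\lambda_{\bW}^{D_{\cG}-1}$, and the conversion $1-\lambda\le-\log\lambda$; the only difference is that you prove the support fact by an explicit induction, whereas the paper simply notes that no walk of length $D_{\cG}-1$ joins the two nodes.
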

\begin{proof}
    See Appendix \ref{pf:l:spectral}.
\end{proof}

The four graph families used in the remainder of this section are illustrated
in Figure~\ref{fig:section2-graphs}. Each displayed undirected link represents
the two opposite ordered pairs in the symmetric off-diagonal support of a
mixing matrix; diagonal self-loops are omitted.
\begin{figure}[t]
    \centering
    \subfloat[]{
        \includegraphics[width=0.22\textwidth]{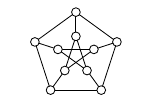}
    }
    \hfill
    \subfloat[]{
        \includegraphics[width=0.22\textwidth]{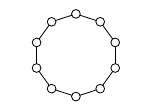}
    }
    \hfill
    \subfloat[]{
        \includegraphics[width=0.22\textwidth]{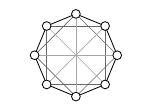}
    }
    \hfill
    \subfloat[]{
        \includegraphics[width=0.22\textwidth]{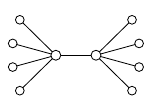}
    }
    \caption{Representative support graphs used in Section~\ref{sec:spec}:
    (a) constant-degree expander graphs, represented here by the 3-regular
    Petersen graph with $d=3$ (Lemma~\ref{l:expander});
    (b) the ring graph (Lemma~\ref{l:ring});
    (c) the undirected exponential graph (Lemma~\ref{l:exponential}); and
    (d) the balanced double-star graph 
    (Lemma~\ref{l:double_star}).}
    \label{fig:section2-graphs}
\end{figure}

We first use constant-degree expander graphs, as reviewed in
\cite{hoory2006expander}, to show that the diameter lower bound in Theorem
\ref{l:spectral} is tight up to constant factors.  For the spectral statements
below, we write
$\mu_1(\bM)\ge\mu_2(\bM)\ge\cdots\ge\mu_n(\bM)$
for the eigenvalues of a real symmetric matrix
$\bM\in\reals^{n\times n}$ in nonincreasing algebraic order.  Thus, for a
mixing matrix $\bW$, $\mu_1(\bW)=1$ and
$\lambda_{\bW}=\max\{|\mu_2(\bW)|,|\mu_n(\bW)|\}$; when $\bW$ is positive
semidefinite, this reduces to $\lambda_{\bW}=\mu_2(\bW)$.

\begin{lemma}[Constant-degree expander graphs]
    \label{l:expander}
    Let $\cG_{\mathrm{cde}}$ denote an $n$-node member of a family of connected
    $d$-regular expander graphs in the sense of Definition 2.2 in
    \cite{hoory2006expander}, where $d\ge3$ is fixed.  Let
    $\bA_{\mathrm{cde}}$ be the adjacency matrix of $\cG_{\mathrm{cde}}$ and
    define the symmetric doubly stochastic weight matrix
    $\bW_{\mathrm{cde}}:=\bA_{\mathrm{cde}}/d$.  Then there exists a constant
    $c>0$, independent of $n$, such that
    \[
        \mu_2(\bW_{\mathrm{cde}})\le1-c.
    \]
    Moreover, the diameter of its support graph satisfies
    \[
        D_{\cG_{\mathrm{cde}}}=\Theta(\log n).
    \]
\end{lemma}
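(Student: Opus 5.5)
Definition 2.2 in \cite{hoory2006expander} says that a family of $d$-regular graphs $\{\cG_m\}$ is an expander family if there is $\epsilon>0$ with $\mu_2(\bA_{\cG_m})\le(1-\epsilon)d$ for all $m$; equivalently, the edge expansion is uniformly bounded below. The plan has three steps. First read off the spectral bound directly from this definition. Then obtain the diameter upper bound from expansion. Finally obtain the matching diameter lower bound from bounded degree, either by a counting argument or by invoking Theorem~\ref{l:spectral}.

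\textbf{Spectral bound.} Because $\cG_{\mathrm{cde}}$ is $d$-regular, $\bW_{\mathrm{cde}}=\bA_{\mathrm{cde}}/d$ is nonnegative, symmetric, and has row and column sums equal to $1$. Its off-diagonal support is exactly $\cE$, so it is a mixing matrix on $\cG_{\mathrm{cde}}$. Its eigenvalues are those of $\bA_{\mathrm{cde}}$ divided by $d$, so $\mu_2(\bW_{\mathrm{cde}})=\mu_2(\bA_{\mathrm{cde}})/d\le 1-\epsilon$, and we take $c:=\epsilon$. If Definition 2.2 is stated through edge expansion $h(\cG)\ge\epsilon$ instead, I would invoke the discrete Cheeger inequality $d-\mu_2(\bA)\ge h(\cG)^2/(2d)$ from the same survey. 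This gives $c=\epsilon^2/(2d^2)$, which again does not depend on $n$.

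\textbf{Diameter upper bound.} This is the main step. I would use the standard ball-growth argument. Fix a vertex $v$ and let $B_r(v)$ be the ball of radius $r$ around it. While $|B_r|\le n/2$, edge expansion gives at least $\epsilon|B_r|$ edges leaving $B_r$. Each vertex of $B_{r+1}\setminus B_r$ absorbs at most $d$ of these edges, so $|B_{r+1}|\ge(1+\epsilon/d)|B_r|$. After $r_0=\lceil\log(n/2)/\log(1+\epsilon/d)\rceil=\cO(\log n)$ steps, the ball has more than $n/2$ vertices. The balls around any two vertices $u,v$ then intersect, so $\operatorname{dist}(u,v)\le 2r_0+2$, which gives $D_{\cG_{\mathrm{cde}}}=\cO(\log n)$. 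If only the spectral definition is available, I would first pass to edge expansion using the easy direction of Cheeger, $h(\cG)\ge(d-\mu_2)/2$. An alternative that avoids expansion altogether is the spectral diameter bound: since $\lambda_{\bW}<1$ is not guaranteed (bipartiteness), apply it to the lazy matrix $(\bI+\bW)/2$ instead of $\bW$. Every entry of $((\bI+\bW)/2)^k$ is at least $1/n-\lambda^k>0$ once $k>\log n/\log(1/\lambda)$, where $\lambda\le 1-c/2$. This matrix has the same support graph, so all pairwise distances are at most $\cO(\log n)$.

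\textbf{Diameter lower bound.} A $d$-regular graph has at most $1+d+d(d-1)+\dots\le d^{D+1}$ vertices within distance $D$ of any vertex. Taking $D=D_{\cG_{\mathrm{cde}}}$, the whole graph lies in such a ball, so $n\le d^{D+1}$ and $D\ge\log n/\log d-1=\Omega(\log n)$ for fixed $d$. Combining the two bounds gives $D_{\cG_{\mathrm{cde}}}=\Theta(\log n)$.
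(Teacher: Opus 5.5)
Your proposal is correct and follows essentially the paper's route: the paper takes Definition~2.2 of \cite{hoory2006expander} to be uniform edge expansion and combines it with Cheeger's inequality (Theorem~2.4 there), which is your fallback case with $c=\epsilon^2/(2d^2)$. Like you, it uses the ball-growth argument for $D_{\cG_{\mathrm{cde}}}=\cO(\log n)$ and a Moore-type $\Omega(\log_{d-1} n)$ bound for the lower bound, which it cites from the survey where you write out the count. One small slip is in your overview: Theorem~\ref{l:spectral}, applied to the lazy matrix as in your alternative argument, gives the upper bound $D_{\cG_{\mathrm{cde}}}=\cO(\log n)$ rather than the lower bound, so the lower bound must come from your counting argument.
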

\begin{proof}
    See Appendix \ref{pf:l:expander}.
\end{proof}

\begin{remark}[Tightness on constant-degree expander graphs]
    \label{r:expander}
    For the expander family in Lemma \ref{l:expander}, consider the canonical
    lazy random-walk matrix
    \[
        \widehat{\bW}_{\mathrm{cde}}
        :=\frac12\prt{\bI+\bW_{\mathrm{cde}}}.
    \]
    This matrix is symmetric, positive semidefinite, and doubly stochastic,
    and its support graph is $\cG_{\mathrm{cde}}$; hence
    $D_{\cG_{\mathrm{cde}}}=\Theta(\log n)$.
    Its nontrivial eigenvalues are
    $\prt{1+\mu_i(\bW_{\mathrm{cde}})}/2$, $i=2,\ldots,n$, so
    \[
        1-\lambda_{\widehat{\bW}_{\mathrm{cde}}}
        =\frac{1-\mu_2(\bW_{\mathrm{cde}})}{2}
        \ge\frac{c}{2},
        \qquad
        \tau_{\widehat{\bW}_{\mathrm{cde}}}
        \le\frac{2}{c}=\cO(1).
    \]
    Since $\tau_{\widehat{\bW}_{\mathrm{cde}}}\ge1$, we have
    $\tau_{\widehat{\bW}_{\mathrm{cde}}}=\Theta(1)$.  On the other hand,
    Lemma \ref{l:expander} gives
    \[
        \frac{D_{\cG_{\mathrm{cde}}}-1}{\log n}=\Theta(1).
    \]
    Hence the lower bound in Theorem \ref{l:spectral} is tight up to constant
    factors on constant-degree expander families.
\end{remark}

The bound in Theorem \ref{l:spectral} depends only on the graph invariants $n$
and $D_{\cG}$ and holds
regardless of how the admissible edge weights are selected.  Its direction is
important: a large diameter forces the inverse gap to be large, up to the
$\log n$ factor, but a small diameter does not force the inverse gap to be
small.  Indeed, Theorem \ref{thm:matrix-dependent-gap} keeps $n$ and
$D_{\cG}$ fixed while making $\tau_{\bW}$ arbitrarily large.  Diameter is
therefore an intrinsic propagation barrier, whereas the inverse spectral gap
may contain an additional and potentially much larger penalty caused by the
chosen weights.

\subsection{Diameter and the Inverse Spectral Gap: Insights from Three Graph Families}

\label{sec:examples}

We now use the ring, undirected exponential graph, and balanced double-star graph
families to make the distinction between diameter and inverse spectral gap
concrete.  For each family, the diameter is determined solely by the support
graph and can be computed without selecting communication weights, whereas
$\tau_{\bW}$ requires the spectral analysis of a particular weight matrix and
can change substantially under reweighting.  The three examples exhibit
distinct asymptotic relations between $D_{\cG}$ and $\tau_{\bW}$.  Thus,
although diameter does not determine the inverse spectral gap, its invariance
under reweighting makes it a more appropriate measure for isolating the effect
of graph topology on decentralized optimization.

\begin{lemma}[Ring graph]
    \label{l:ring}
    For an $n$-node ring graph $\cG_{\mathrm{r}}$ with $n\ge 3$, let
    $\bW_{\mathrm{r}}$ be the lazy ring matrix defined by
    \[
    \brk{\bW_{\mathrm{r}}}_{j,j}=\frac{1}{2},
    \qquad
    \brk{\bW_{\mathrm{r}}}_{j,j-1}
    =
    \brk{\bW_{\mathrm{r}}}_{j,j+1}
    =
    \frac{1}{4},
    \]
    with indices taken modulo $n$. Then $\bW_{\mathrm{r}}$ is nonnegative,
    symmetric, doubly stochastic, and positive semidefinite,
    $\cG_{\bW_{\mathrm{r}}}=\cG_{\mathrm{r}}$, the graph diameter is
    $D_{\cG_{\mathrm{r}}}=\floor{n/2}$, and
    \[
    \lambda_{\bW_{\mathrm{r}}}=\cos^2\prt{\frac{\pi}{n}},
    \qquad
    \tau_{\bW_{\mathrm{r}}}
    =
    \frac{1}{\sin^2\prt{\pi/n}}
    =\Theta(n^2)
    =\Theta\prt{D_{\cG_{\mathrm{r}}}^2}.
    \]
\end{lemma}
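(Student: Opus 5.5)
The plan is to write $\bW_{\mathrm{r}}=\frac12\bI+\frac14(\bP+\bP^\T)$, where $\bP$ is the cyclic shift permutation matrix with $\brk{\bP}_{j,j+1}=1$ (indices mod $n$). Then $\bW_{\mathrm{r}}$ is circulant, and all spectral claims follow from the explicit Fourier diagonalization of circulant matrices.

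First I dispose of the structural claims. The entries are nonnegative, and since $\bP^\T$ is also a permutation matrix, $\bW_{\mathrm{r}}$ is symmetric with every row and column summing to $\frac12+\frac14+\frac14=1$. For $n\ge3$ the indices $j-1,j,j+1$ are distinct mod $n$, so the weights $\frac14$ do not collide with the diagonal or with each other. The off-diagonal positive entries are therefore exactly the pairs $\{j,j\pm1\}$, which gives $\cG_{\bW_{\mathrm{r}}}=\cG_{\mathrm{r}}$. For the diameter: the distance between $i$ and $j$ on the cycle is $\min\{|i-j|,n-|i-j|\}$, which is at most $\floor{n/2}$, and this value is attained by taking $|i-j|=\floor{n/2}$.

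Next, the spectrum. The vectors $v_k=(\omega^{kj})_{j}$ with $\omega=e^{2\pi\mathrm{i}/n}$ are eigenvectors of $\bP$ with eigenvalue $\omega^{k}$, so $\bW_{\mathrm{r}}v_k=\prt{\frac12+\frac12\cos(2\pi k/n)}v_k=\cos^2(\pi k/n)\,v_k$ for $k=0,\dots,n-1$. All these eigenvalues are nonnegative, which gives positive semidefiniteness. The value $k=0$ gives the eigenvalue $1$ with eigenvector $\mone$, which is the component removed by $\bJ$. Hence $\lambda_{\bW_{\mathrm{r}}}=\max_{1\le k\le n-1}\cos^2(\pi k/n)$. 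Since $\cos^2(\pi k/n)$ is symmetric under $k\mapsto n-k$ and decreasing on $k\in[0,n/2]$, the maximum is attained at $k=1$ and $k=n-1$ and equals $\cos^2(\pi/n)<1$. Then $1-\lambda_{\bW_{\mathrm{r}}}=\sin^2(\pi/n)$, which gives the stated formula for $\tau_{\bW_{\mathrm{r}}}$.

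Finally, the asymptotics. For $x\in(0,\pi/3]$, the bound $\frac{2}{\pi}x\le\sin x\le x$ (Jordan's inequality) gives $\frac{n^2}{\pi^2}\le\tau_{\bW_{\mathrm{r}}}\le\frac{n^2}{4}$, so $\tau_{\bW_{\mathrm{r}}}=\Theta(n^2)$. Since $\floor{n/2}\ge n/3$ for $n\ge3$, we have $D_{\cG_{\mathrm{r}}}=\Theta(n)$, and hence $\tau_{\bW_{\mathrm{r}}}=\Theta(D_{\cG_{\mathrm{r}}}^2)$. The only step requiring care is identifying $\lambda_{\bW_{\mathrm{r}}}$ as the second-largest eigenvalue rather than a negative one; positive semidefiniteness makes this immediate, so I expect no real obstacle.
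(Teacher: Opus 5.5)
Your proposal is correct and follows the paper's proof essentially step by step: the Fourier vectors $v_k$ diagonalize the circulant $\bW_{\mathrm{r}}$ with eigenvalues $\cos^2(\pi k/n)$. This gives positive semidefiniteness, $\lambda_{\bW_{\mathrm{r}}}=\cos^2(\pi/n)$, and $\tau_{\bW_{\mathrm{r}}}=1/\sin^2(\pi/n)$. Your explicit check of the support graph and of $D_{\cG_{\mathrm{r}}}=\floor{n/2}$, which the paper's proof leaves implicit, and your use of Jordan's inequality to obtain the concrete bounds $n^2/\pi^2\le\tau_{\bW_{\mathrm{r}}}\le n^2/4$ in place of the paper's Taylor-expansion remark, are refinements rather than a different route.
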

\begin{proof}
    See Appendix \ref{pf:l:ring}.
\end{proof}

\begin{lemma}[Undirected exponential graph]
    \label{l:exponential}
    Let $n=2^m$ with $m\ge 2$, let $\bfe_0,\ldots,\bfe_{n-1}$ denote the standard basis vectors, and define the cyclic-shift permutation matrix $\bP$ by
    \[
        \bP\bfe_j=\bfe_{(j+1)\bmod n},
        \qquad j=0,\ldots,n-1.
    \]
    Since $\bP$ is a permutation matrix, $\bP^{-1}=\bP^\T$; hence
    $\bP^q\bfe_j=\bfe_{(j+q)\bmod n}$ for every $q\in\mathbb{Z}$.
    Motivated by the static exponential graph studied in \cite{ying2021exponential}, define its undirected symmetric mixing matrix by
    \[
        \bW_{\mathrm{ex}}
        :=
        \frac{1}{2m}\sum_{r=0}^{m-1}
        \prt{\bP^{2^r}+\bP^{-2^r}}.
    \]
    Then $\bW_{\mathrm{ex}}$ is symmetric and doubly stochastic. Let
    $\cG_{\mathrm{ex}}:=\cG_{\bW_{\mathrm{ex}}}$ denote its support graph,
    which has edges connecting nodes at cyclic distances
    $2^0,2^1,\ldots,2^{m-1}$, and
    \[
        D_{\cG_{\mathrm{ex}}}
        =
        \ceil{\frac{m}{2}}
        =
        \Theta(\log n).
    \]
    Moreover,
    \[
        1-\frac{2}{m}
        \le
        \lambda_{\bW_{\mathrm{ex}}}
        \le
        1-\frac{2-\sqrt{2}}{m}.
    \]
    Consequently,
    \[
        \tau_{\bW_{\mathrm{ex}}}
        =
        \Theta(m)
        =
        \Theta(\log n)
        =
        \Theta(D_{\cG_{\mathrm{ex}}}).
    \]
\end{lemma}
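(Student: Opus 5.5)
The matrix $\bW_{\mathrm{ex}}$ is circulant, so I would diagonalize it with the discrete Fourier basis and reduce every claim to an explicit trigonometric expression.

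\textbf{Basic properties.} Symmetry holds because $(\bP^{q})^\T=\bP^{-q}$, so each summand $\bP^{2^r}+\bP^{-2^r}$ is symmetric. Each summand is a sum of two permutation matrices, hence has row and column sums $2$, and the normalization $1/(2m)$ gives double stochasticity. For $r=m-1$ we have $\bP^{2^{m-1}}=\bP^{-2^{m-1}}$, since $2^{m-1}=n/2$. That term contributes weight $1/m$ to the antipodal neighbor, but the support is still the set of pairs at cyclic distance $2^r$, $r=0,\dots,m-1$.

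\textbf{Diameter.} By vertex transitivity, $D_{\cG_{\mathrm{ex}}}=\max_k d(k)$. Here $d(k)$ is the minimal number of terms $\pm2^r$ ($r<m$) summing to $k \bmod 2^m$. For the upper bound I would use signed-digit (non-adjacent form) representations: every integer has a signed binary representation with no two adjacent nonzero digits. Reducing modulo $2^m$ so that $|k|\le 2^{m-1}$, and using that the digit $2^{m-1}$ has $\pm$ equivalent, gives at most $\lceil m/2\rceil$ nonzero digits. For the lower bound I would exhibit a specific $k$, namely an alternating pattern $k=\sum_{j}4^j$ of about $m/2$ ones, possibly adjusted by parity of $m$. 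I would then show that no representation uses fewer terms, by invoking minimality of the NAF weight among signed-digit representations and handling the wrap-around modulo $2^m$. The wrap-around and parity case analysis is the main obstacle here. If it gets messy, only $\Theta(\log n)$ is strictly needed for the consequence, and the trivial bound $d(k)\ge1$ together with counting $\#\{\text{sums of }\le s\text{ terms}\}\le(2m+1)^s$ gives $D\ge \log n/\log(2m+1)$. I would still aim to prove the exact value $\lceil m/2\rceil$.

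\textbf{Spectrum.} The eigenvalues are
\[
\theta_k=\frac1m\sum_{r=0}^{m-1}\cos\prt{2\pi k2^r/n},\qquad k=0,\dots,n-1,
\]
with $\theta_0=1$. For the lower bound on $\lambda$, take $k=1$. Then $\cos(2\pi2^r/n)$ equals $-1$ for $r=m-1$ and is $\ge 0$ for $r\le m-2$, using $2^r/n\le1/4$. The terms with $r\le m-3$ satisfy $\cos\ge1-2\pi^2 4^{r}/n^2$, and summing this geometric series gives
\[
\theta_1\ge \frac{(m-2)+0-1}{m}+\cdots,
\]
which is not quite enough. A better choice is $k=n/2^{m-1}\cdot\frac12$, or rather $k=2^{m-1}$ itself: then $\theta=\frac1m\big(\sum_{r\ge1}1-1\big)=\frac{m-2}{m}=1-2/m$. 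This gives the lower bound cleanly.

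For the upper bound on $|\theta_k|$ with $k\neq0$, write $k=2^s u$ with $u$ odd and $s<m$. Then $2^r k\equiv0 \pmod n$ for $r\ge m-s$, so those terms contribute $1$. The term $r=m-s-1$ gives angle $\pi u$, contributing $-1$. The remaining $m-s-1$ terms are cosines of $2\pi u/2^{j}$, $j=2,\dots,m-s$. The key step is a doubling inequality: for a consecutive pair of angles $\phi,2\phi$,
\[
\cos\phi+\cos2\phi\le \max\cdot\text{ bound},
\]
so that the terms with $j\ge2$ sum to at most $(m-s-1)-(1-\sqrt2/2)$ or so. Combined with the $-1$ term, this should give $\theta_k\le 1-(2-\sqrt2)/m$. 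The relevant extreme is $j=2$, where the angle is $\pm\pi/2$ and the cosine is $0$; the neighboring $j=3$ gives $\cos(\pi/4)=\sqrt2/2$. I would verify that the deficit from the last three terms is always at least $2-\sqrt2$.

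For negative eigenvalues, $\theta_k\ge -1+\text{const}$ holds because at most one term equals $-1$ and the rest contribute $1$'s (via the $r\ge m-s$ terms) or bounded amounts, so $|\theta_k|$ is dominated by the positive side for $m\ge2$. Checking the small cases separately, $\tau=\Theta(m)=\Theta(D)$ then follows.
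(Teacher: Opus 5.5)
Your proposal has a genuine gap in the bound on negative eigenvalues. The diameter lower bound is also left unproved, and your fallback for it fails.

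\textbf{Negative eigenvalues.} Write $k=2^s u$ as you do. Exactly one term equals $-1$ and every other term is at most $1$, so $\theta_k\le 1-2/m$ holds for every $k\neq0$ at once. The doubling inequality is therefore unnecessary on the positive side; the real work is bounding $-\theta_k$. Your justification there (one term equals $-1$, the rest are $1$'s from $r\ge m-s$ or ``bounded amounts'') gives nothing for odd $k$. In that case $s=0$, there are no terms with $r\ge m-s$, and knowing the other $m-1$ cosines lie in $[-1,1]$ only yields $\theta_k\ge-1$, since a term can be close to $-1$ without equaling it.

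Your claim that $|\theta_k|$ is dominated by the positive side for $m\ge2$ is false in small cases:
\begin{itemize}
\item for $m=2$, $\theta_1=\frac12(\cos\frac{\pi}{2}+\cos\pi)=-\frac12$, while $\max_{k\neq0}\theta_k=0$;
\item for $m=3$, $\theta_3=\frac13(\cos\frac{3\pi}{4}+0-1)\approx-0.569$, versus $\frac13$;
\item for $m=4$, $\theta_5\approx-0.522$, versus $\frac12$.
\end{itemize}
So $\lambda_{\bW_{\mathrm{ex}}}$ is attained by a negative eigenvalue exactly in these cases.

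The missing observation is the one you noted only in passing. For odd $k$, the $r=m-2$ term is exactly $\cos(\pi u/2)=0$, so $m\theta_k\ge -1+0-(m-2)$, giving $-\theta_k\le 1-\frac1m\le 1-\frac{2-\sqrt2}{m}$. For $s\ge1$, the $s$ terms equal to $1$ give $-\theta_k\le 1-\frac{2s}{m}$. The paper avoids splitting by sign. It bounds $|\theta_k|$ by the modulus of $\eta_k=\frac1m\sum_r\omega^{k2^r}$ and uses partial cancellation of the pair $\{-1,1\}$ when $s\ge1$, or $\{-1,\pm i\}$ when $s=0$. That pair is the source of the constant $2-\sqrt2$.

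\textbf{Diameter.} Your non-adjacent-form (NAF) upper bound $\ceil{m/2}$ is fine. The lower bound, however, is only named. Your fallback is also wrong: $(2m+1)^s\ge 2^m$ gives only $s\ge m/\log_2(2m+1)$, i.e.\ $\Omega(\log n/\log\log n)$. That yields neither $D_{\cG_{\mathrm{ex}}}=\Theta(\log n)$ nor $\tau_{\bW_{\mathrm{ex}}}=\Theta(D_{\cG_{\mathrm{ex}}})$.

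The NAF route can be completed in three steps:
\begin{itemize}
\item A representation $k\equiv\sum_q\sigma_q2^{r_q}\pmod{2^m}$ with repeated exponents consolidates, via $2^r+2^r=2^{r+1}$ and cancellation, into a $\{-1,0,1\}$-digit representation of some integer $N\equiv k\pmod{2^m}$ with no more nonzero digits.
\item NAF weight is minimal among such representations.
\item The NAF digits in positions $0,\dots,m-2$ depend only on $N \bmod 2^m$.
\end{itemize}
For even $m$ and $k=\sum_{j<m/2}4^j$, this forces $m/2$ nonzero digits. For odd $m$ and $k=\sum_{j\le(m-1)/2}4^j$, it forces $(m-1)/2$ nonzero digits in positions $\le m-2$. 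The remaining quotient $(N-\sum_{i\le m-2}d_i2^i)/2^{m-1}$ is then odd, hence nonzero, which forces one more. The paper instead proves both directions with an elementary recurrence for $\ell_h$, obtained by pairing the exponent-zero terms, and uses explicit witnesses $a_h$.
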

\begin{proof}
    See Appendix \ref{pf:l:exponential}.
\end{proof}

\begin{lemma}[Two weightings of a balanced double-star graph]
    \label{l:double_star}
    Let $n=2m+2$ with $m\ge 1$. Consider the balanced double-star graph
    $\cG_{\mathrm{ds}}$ with two center nodes $c_1,c_2$, two disjoint leaf sets
    $\cL_1$ and $\cL_2$ of size $m$, edges between $c_1$ and every node in
    $\cL_1$, edges between $c_2$ and every node in $\cL_2$, and one edge
    between $c_1$ and $c_2$. Set
    \[
        a:=\frac{1}{m+2},
        \qquad
        b:=\frac{m+1}{m+2},
        \qquad
        \alpha:=\frac{1}{2m},
        \qquad
        \beta:=1-\frac{1}{2m}.
    \]
    After ordering the nodes as $(\cL_1,c_1,c_2,\cL_2)$, define the two
    weight matrices
    \[
    \begin{aligned}
        \bW_{\mathrm{ds}}
        &:=
        \begin{bmatrix}
            b\bI_m & a\mone_m & 0 & 0 \\
            a\mone_m^\T & a & a & 0 \\
            0 & a & a & a\mone_m^\T \\
            0 & 0 & a\mone_m & b\bI_m
        \end{bmatrix},\quad
        \widetilde{\bW}_{\mathrm{ds}}
        &:=
        \begin{bmatrix}
            \beta\bI_m & \alpha\mone_m & 0 & 0 \\
            \alpha\mone_m^\T & \frac14 & \frac14 & 0 \\
            0 & \frac14 & \frac14 & \alpha\mone_m^\T \\
            0 & 0 & \alpha\mone_m & \beta\bI_m
        \end{bmatrix}.
    \end{aligned}
    \]
    Both $\bW_{\mathrm{ds}}$ and $\widetilde{\bW}_{\mathrm{ds}}$ are mixing
    matrices on $\cG_{\mathrm{ds}}$, whose diameter is
    $D_{\cG_{\mathrm{ds}}}=3$. Their spectral quantities satisfy
    \[
    \begin{aligned}
        \lambda_{\bW_{\mathrm{ds}}}
        &=
        \frac{m+1+\sqrt{m^2+6m+1}}{2(m+2)},
        &\qquad
        \tau_{\bW_{\mathrm{ds}}}
        &=
        \frac{(m+2)\prt{m+3+\sqrt{m^2+6m+1}}}{4}
        =
        \Theta(n^2),\\
        \lambda_{\widetilde{\bW}_{\mathrm{ds}}}
        &=
        \frac{1-\frac{1}{2m}+\sqrt{1+\frac{1}{4m^2}}}{2},
        &\qquad
        \tau_{\widetilde{\bW}_{\mathrm{ds}}}
        &=
        2m+1+\sqrt{4m^2+1}
        =
        \Theta(n).
    \end{aligned}
    \]
\end{lemma}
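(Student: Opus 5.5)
The plan is to handle both matrices by a single symmetry reduction, exploiting the fact that each matrix is invariant under permutations within $\cL_1$, within $\cL_2$, and under the swap $\cL_1\leftrightarrow\cL_2$, $c_1\leftrightarrow c_2$.

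First, the mixing-matrix properties are direct checks. Both matrices are symmetric and entrywise nonnegative. Each row sums to one: for $\bW_{\mathrm{ds}}$, leaf rows give $b+a=1$ and center rows give $ma+a+a=(m+2)a=1$. For $\widetilde{\bW}_{\mathrm{ds}}$, leaf rows give $\beta+\alpha=1$ and center rows give $m\alpha+\tfrac12=1$. The off-diagonal positive entries sit exactly on the leaf--center edges and the $c_1c_2$ edge. The diameter is $3$: any leaf in $\cL_1$ reaches any leaf in $\cL_2$ only via $c_1,c_2$, and all other pairs are at distance at most $3$.

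For the spectrum, I will use a generic matrix
\[
\begin{bmatrix} \beta'\bI_m & \alpha'\mone & 0&0\\ \alpha'\mone^\T & \gamma & \gamma & 0\\ 0&\gamma&\gamma&\alpha'\mone^\T\\ 0&0&\alpha'\mone&\beta'\bI_m\end{bmatrix},
\]
which covers $(\beta',\alpha',\gamma)=(b,a,a)$ and $(\beta,\alpha,\tfrac14)$. I then decompose $\reals^n$ into invariant subspaces.

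\emph{Step 1.} Vectors supported on $\cL_1$ (or $\cL_2$) with zero sum, and vanishing at the centers, are eigenvectors with eigenvalue $\beta'$. This gives $2(m-1)$ eigenvalues.

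\emph{Step 2.} The remaining four dimensions are spanned by $\mone_{\cL_1},e_{c_1},e_{c_2},\mone_{\cL_2}$. Using the swap symmetry, I split these into symmetric vectors $(u\mone,v,v,u\mone)$ and antisymmetric vectors $(u\mone,v,-v,-u\mone)$. Each gives a $2\times2$ system in $(u,v)$:
\[
\begin{bmatrix}\beta' & \alpha'\\ m\alpha' & 2\gamma\end{bmatrix}\ (\text{symmetric}),\qquad \begin{bmatrix}\beta' & \alpha'\\ m\alpha' & 0\end{bmatrix}\ (\text{antisymmetric}).
\]
The symmetric block has eigenvalue $1$, since $(u,v)=(1,1)$ works. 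Its other eigenvalue is $\beta'+2\gamma-1$ by the trace.

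\emph{Step 3.} The antisymmetric block has eigenvalues $\tfrac{\beta'\pm\sqrt{\beta'^2+4m\alpha'^2}}{2}$. I then compare all nontrivial eigenvalues in absolute value.

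For $\bW_{\mathrm{ds}}$ the candidates are:
\begin{itemize}
\item $b=\tfrac{m+1}{m+2}$;
\item $b+2a-1=\tfrac{1}{m+2}$;
\item $\tfrac{(m+1)\pm\sqrt{(m+1)^2+4m}}{2(m+2)}$.
\end{itemize}
Here $(m+1)^2+4m=m^2+6m+1$. The plus root exceeds $b$, and the minus root is negative with smaller modulus than the plus root. Hence $\lambda_{\bW_{\mathrm{ds}}}$ is the plus root, matching the claim. Then $1-\lambda=\tfrac{m+3-\sqrt{m^2+6m+1}}{2(m+2)}$. Rationalizing with $(m+3)^2-(m^2+6m+1)=8$ gives the stated $\tau$, which is $\Theta(m^2)=\Theta(n^2)$.

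For $\widetilde{\bW}_{\mathrm{ds}}$ the candidates are:
\begin{itemize}
\item $\beta=1-\tfrac1{2m}$;
\item $\beta+\tfrac12-1=\tfrac12-\tfrac1{2m}$;
\item $\tfrac{\beta\pm\sqrt{\beta^2+1/m}}{2}$.
\end{itemize}
The plus root dominates the other candidates, including $\beta$, since $\sqrt{\beta^2+1/m}>\beta$. Simplifying, $\beta^2+\tfrac1m=1+\tfrac1{4m^2}$, which gives the stated $\lambda$. Then $1-\lambda=\tfrac{1+\frac1{2m}-\sqrt{1+\frac1{4m^2}}}{2}$. Multiplying by the conjugate gives numerator $(1+\tfrac1{2m})^2-1-\tfrac1{4m^2}=\tfrac1m$. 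Hence $\tau=2m\bigl(1+\tfrac1{2m}+\sqrt{1+\tfrac1{4m^2}}\bigr)=2m+1+\sqrt{4m^2+1}$, which is $\Theta(n)$.

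The main obstacle is bookkeeping rather than conceptual. One must verify that the four-plus-$2(m-1)$ eigenvectors exhaust the spectrum and correctly identify which candidate attains the maximum modulus, including the negative root. This is especially delicate for $m=1$, where Step 1 is empty and the inequalities should be checked directly.
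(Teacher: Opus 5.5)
Your proposal is correct and follows essentially the same route as the paper: the leaf-difference eigenspaces carry the eigenvalue $b$ (respectively $\beta$), and your symmetric/antisymmetric split is the paper's sum/difference change of variables, yielding the same two $2\times 2$ blocks, the same comparison of eigenvalue moduli, and the same rationalization of the gaps. The $m=1$ case needs no separate check. Removing $\beta'$ from the candidate list cannot change a maximum modulus already attained by the plus root of the antisymmetric block.
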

\begin{proof}
    See Appendix \ref{pf:l:double_star}.
\end{proof}

\begin{remark}[Topology-specific weight tuning]
    The matrix $\bW_{\mathrm{ds}}$ is generated by the Metropolis rule, a
    standard weight choice in decentralized optimization; see
    \cite{nedic2018network}.  Nevertheless, Lemma~\ref{l:double_star} shows
    that this generic rule need not be spectrally well tuned to a given
    topology: replacing $\bW_{\mathrm{ds}}$ with
    $\widetilde{\bW}_{\mathrm{ds}}$ reduces the inverse-gap factor from
    $\Theta(n^2)$ to $\Theta(n)$.  Thus topology-specific weight tuning can
    improve the network-dependent term of algorithms whose convergence rates
    depend on $\tau_{\bW}$, without changing the communication graph.
\end{remark}

\begin{remark}[No universal scaling relation between diameter and the inverse spectral gap]
    The following graph--matrix pairs exhibit fundamentally different
    asymptotic relations between diameter and inverse spectral gap:
    \begin{center}
        \small
        \begin{tabular}{@{}lcc@{}}
            \toprule
            Graph--matrix pair $(\cG,\bW)$ & Diameter $D_{\cG}$ & Inverse spectral gap $\tau_{\bW}$ \\
            \midrule
            Ring $(\cG_{\mathrm{r}},\bW_{\mathrm{r}})$
            & $\Theta(n)$ & $\Theta(n^2)$ \\
            Undirected exponential $(\cG_{\mathrm{ex}},\bW_{\mathrm{ex}})$
            & $\Theta(\log n)$ & $\Theta(\log n)$ \\
            Balanced double-star $(\cG_{\mathrm{ds}},\bW_{\mathrm{ds}})$
            & $3$ & $\Theta(n^2)$ \\
            \bottomrule
        \end{tabular}
    \end{center}
    Thus no universal scaling relation between $D_{\cG}$ and $\tau_{\bW}$
    holds across these representative pairs: the inverse gap is quadratic in
    the diameter on the ring, linear in the diameter on the undirected
    exponential graph, and grows quadratically with $n$ even though the
    double-star diameter remains equal to $3$. 
\end{remark}

The preceding examples exhibit several incompatible scalings between diameter
and inverse spectral gap.  The following theorem strengthens this observation:
even among graph--matrix pairs with arbitrarily large diameter, every
prescribed positive power-law relation between the two quantities can be
realized exactly.

\begin{theorem}[Arbitrary power-law relations between diameter and the inverse spectral gap]
    \label{thm:arbitrary-diameter-gap-scaling}
    For any fixed $p>0$ and any $D_0\ge1$, there exists a graph--matrix pair
    $(\cG^*,\bW^*)$ such that
    \[
        D_{\cG^*}\ge D_0,
        \qquad
        \tau_{\bW^*}=D_{\cG^*}^p.
    \]
\end{theorem}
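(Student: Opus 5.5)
Start from a base graph--matrix pair with large diameter and a known inverse gap, then use the lazification of Theorem~\ref{thm:matrix-dependent-gap} to tune $\tau$ to exactly $D^p$. Theorem~\ref{thm:matrix-dependent-gap} can only \emph{increase} $\tau$, by the factor $1/\delta\ge1$. So the base pair must satisfy $\tau_{\bW}\le D_{\cG}^p$, and tuning is then immediate with $\delta:=\tau_{\bW}/D_{\cG}^p\in(0,1]$. The whole difficulty is to find, for arbitrary $p>0$ (in particular small $p$), base pairs whose diameter is at least $D_0$ but whose inverse gap is at most $D^p$.

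For $p\ge2$ the ring of Lemma~\ref{l:ring} already works. It gives $\tau_{\bW_{\mathrm{r}}}=1/\sin^2(\pi/n)$ with $D=\floor{n/2}$, and $\tau\le D^2\le D^p$ holds once $n$ is large, since $1/\sin^2(\pi/n)\sim n^2/\pi^2\le\floor{n/2}^2$. For general $p>0$, however, Theorem~\ref{l:spectral} forces $\tau\ge(D-1)/\log n$. Small $p$ therefore requires $n$ to be exponentially large in $D$, and we need a family with $\tau$ bounded while $D$ grows. Constant-degree expanders do exactly this. By Lemma~\ref{l:expander} and Remark~\ref{r:expander}, the lazy matrix $\widehat{\bW}_{\mathrm{cde}}$ is a positive semidefinite mixing matrix on $\cG_{\mathrm{cde}}$ with $\tau\le2/c$, uniformly in $n$, while $D_{\cG_{\mathrm{cde}}}=\Theta(\log n)\to\infty$. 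I would also verify $\lambda<1$, which holds by connectivity, so that Theorem~\ref{thm:matrix-dependent-gap} applies.

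The construction is then as follows. Pick $n$ in the expander family large enough that $D:=D_{\cG_{\mathrm{cde}}}\ge\max\{D_0,(2/c)^{1/p},2\}$. This gives $D^p\ge2/c\ge\tau_{\widehat{\bW}_{\mathrm{cde}}}$. Set $\delta:=\tau_{\widehat{\bW}_{\mathrm{cde}}}/D^p\in(0,1]$, which is positive because $\tau\ge1$. Take $\cG^*:=\cG_{\mathrm{cde}}$ and $\bW^*:=(1-\delta)\bI+\delta\widehat{\bW}_{\mathrm{cde}}$. Theorem~\ref{thm:matrix-dependent-gap} says $\bW^*$ is a mixing matrix on $\cG^*$ with the same support, so the diameter is unchanged, and
\[
\tau_{\bW^*}=\frac{\tau_{\widehat{\bW}_{\mathrm{cde}}}}{\delta}=D^p .
\]

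The main obstacle is ensuring that arbitrarily large diameters are actually attained within the expander family. This needs the family to contain members of unboundedly many sizes, which Definition 2.2 of \cite{hoory2006expander} provides, together with the lower bound $D=\Omega(\log n)$. That lower bound follows from $d$-regularity alone, since a ball of radius $r$ contains at most $1+d(d-1)^{r-1}\cdot\frac{d}{d-2}$ nodes. Exactness of $\tau=D^p$ is free because $\delta$ is a continuous parameter, so no integrality issues arise.
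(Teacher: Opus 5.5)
Your proposal is correct and follows essentially the same route as the paper: take the lazy expander matrix $\widehat{\bW}_{\mathrm{cde}}$ with $\tau\le 2/c$ and $D=\Theta(\log n)\to\infty$, choose a member with $D\ge D_0$ and $D^p\ge 2/c$, and apply Theorem~\ref{thm:matrix-dependent-gap} with $\delta=\tau_{\widehat{\bW}_{\mathrm{cde}}}/D^p\in(0,1]$. The ring aside for $p\ge2$ and your Moore-bound justification of $D=\Omega(\log n)$ are harmless extras. The paper does not need them, since it takes the diameter order directly from Lemma~\ref{l:expander}.
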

\begin{proof}
    See Appendix \ref{pf:thm:arbitrary-diameter-gap-scaling}.
\end{proof}

Taken together, the results in this section separate the effect of network
topology from that of a selected weighted mixing protocol.  Theorem
\ref{thm:matrix-dependent-gap} shows that the inverse spectral gap is not a
graph invariant and can change by an arbitrary factor on a fixed support
graph.  Theorem \ref{l:spectral} identifies diameter as a graph-invariant,
one-sided constraint imposed by finite-speed information propagation.  The
constant-degree expander family shows that this constraint is tight, while the
ring, exponential, and double-star pairs show that no universal scaling
relation between $D_{\cG}$ and $\tau_{\bW}$ holds.  Theorem
\ref{thm:arbitrary-diameter-gap-scaling} strengthens these examples by showing
that every positive power-law relation between the two quantities can be
realized exactly by graph--matrix pairs with arbitrarily large diameter.

Consequently, a convergence rate expressed through $\tau_{\bW}$ characterizes
the repeated use of the selected matrix $\bW$, rather than the intrinsic
influence of network topology.  When the goal is to isolate this topological
influence, diameter is a more appropriate graph-level measure: it is invariant
under reweighting and equals the worst-case minimum number of one-hop rounds
needed for information to traverse the network.  This distinction motivates
the diameter-based tree communication mechanism developed in this paper.

\section{Network Dependence and Uniform Optimality}
\label{sec:lower-bounds}

This section specifies the sense in which a decentralized method can be
optimal with respect to the communication network.  The distinction is
primarily one of quantifiers.  A lower complexity bound over a class of
graphs may be certified by a single hard graph in that class, whereas an
algorithmic upper bound is uniform only if it holds for every graph in the
class.  Matching these two statements identifies the smallest network
dependence that any admissible method can guarantee in the worst case.

This formulation leaves open which network characteristics should be used to
specify the graph class. 
We first formulate this notion for graph classes described by the number of
nodes and the graph diameter.  We then discuss what changes when a spectral
constraint is imposed on a mixing matrix.  
As shown in
Section~\ref{sec:spec}, diameter is a graph invariant, while the spectral gap
also depends on the selected weighted communication protocol.  Consequently,
diameter-based and spectral-gap-based optimality refer, in general, to
different classes of objects and different algorithmic restrictions.

\subsection{Diameter-Based Optimality over Graph Classes}
\label{sec:lower-bounds-diameter}

Let $\mathsf{G}$ be a class of connected undirected communication graphs,
each with node set $\cN$ and $|\cN|=n$.  The local objectives and stochastic
first-order oracles satisfy Assumptions~\ref{a.smooth} and \ref{a.var},
respectively.  All agents start from a common point $x^{(0)}$, and
\[
    \Delta_f:=f(x^{(0)})-\inf_x f(x)\le\Delta.
\]
The framework in \cite{lu2021optimal} initializes all agents at the origin;
the formulation above is equivalent after translating the common initial
point to zero.  Let $\mathsf{I}_n(\Delta,L,\sigma^2)$ denote the class of
pairs $(\{f_i\}_{i=1}^n,O)$ satisfying Assumptions~\ref{a.smooth} and
\ref{a.var} together with $\Delta_f\le\Delta$.

We adopt the broad synchronous first-order algorithm class $\mathsf{A}_B$
introduced in \cite{lu2021optimal}; its zero-respecting condition follows
\cite{carmon2021lower}.  In every round, each agent makes at most $B$ queries
to its local stochastic oracle and communicates only with its one-hop
neighbors.  A method is zero-respecting if an oracle query can be nonzero only
in coordinates previously present in the agent's own or a neighboring state,
while its next local state may additionally activate coordinates revealed by
its current local oracle responses.  Thus, an agent cannot create an unseen
coordinate before it is revealed locally or received through communication.
As discussed in \cite{lu2021optimal}, this class covers standard first-order
methods including SGD \cite{ghadimi2013stochastic}, momentum SGD
\cite{nesterov1983method}, Adam \cite{kingma2014adam}, RMSProp
\cite{tieleman2012lecture}, AdaGrad \cite{ward2018adagrad}, and AdaDelta
\cite{zeiler2012adadelta}.
The final output may be any linear combination of the local model states.  In
particular, $\mathsf{A}_B$ is not restricted to gossip algorithms that
repeatedly apply a prescribed mixing matrix; subject to the same one-hop
communication constraint, it permits algorithm-specific weighting and
auxiliary-state recursions, thereby covering, for example, DSMT \cite{huang2026accelerated}, DeTAG \cite{lu2021optimal}, and our Tree-RGT (see Algorithm \ref{alg:ostl}).

For a method $A\in\mathsf{A}_B$, an instance
$(\{f_i\},O)\in\mathsf{I}_n(\Delta,L,\sigma^2)$, and a graph
$\cG\in\mathsf{G}$, let $\hat{x}_A^{(t)}$ be the output after $t$
synchronous one-hop rounds.  Its $\varepsilon$-round complexity is
\begin{equation}
    \label{eq:epsilon-complexity}
    T_\varepsilon(A,f,O,\cG)
    :=
    \min\crk{
        t\in\nats:
        \expect\norm{\nabla f(\hat{x}_{A,f,O,\cG}^{(t)})}\le\varepsilon
    }.
\end{equation}
Here, one unit of communication complexity corresponds to one synchronous
neighbor-to-neighbor round, irrespective of how many edges are active or how
the communication load is distributed across the nodes. Thus, the model
captures communication latency, but not aggregate bandwidth consumption or
per-node congestion.

The optimal worst-case round complexity over $\mathsf{G}$ is
\begin{equation}
    \label{eq:minimax-round-complexity-general}
    \mathsf{T}_\varepsilon(\mathsf{G},B)
    :=
    \inf_{A\in\mathsf{A}_B}
    \sup_{\substack{
        \cG\in\mathsf{G},\\
        (\{f_i\},O)\in\mathsf{I}_n(\Delta,L,\sigma^2)
    }}
    T_\varepsilon(A,f,O,\cG).
\end{equation}
The joint supremum requires one algorithmic guarantee to hold over every graph
and every admissible optimization instance in the prescribed classes.  This
is the full minimax criterion.  We also use the term
\emph{uniform over the graph class} in a more focused sense: for each fixed
admissible optimization instance, the same bound, with the same numerical
constants and the same dependence on the class parameters, holds for every
graph in the class.  Instance-dependent quantities must then be displayed
explicitly and cannot be hidden in graph-dependent constants.

We now specialize the graph class to
\begin{equation}
    \label{eq:diameter-graph-class}
    \mathsf{G}_{n,\hat D}
    :=
    \crk{
        \cG=(\cN,\cE):
        \cG\text{ is connected and undirected},\ |\cN|=n,\ D_{\cG}=\hat D
    }.
\end{equation}
Both $n$ and $D_{\cG}$ are graph invariants, so membership in
$\mathsf{G}_{n,\hat D}$ is independent of any communication weights subsequently
chosen on the graph.

The following benchmark combines the results of
\cite[Theorems~1 and~2]{lu2021optimal}.

\begin{theorem}[Diameter-based minimax benchmark]
    \label{thm:lu-de-sa-network-benchmark}
    Fix $\Delta,L,\sigma>0$, integers $n\ge2$ and
    $\hat D\in\{1,\ldots,n-1\}$, and an oracle budget $B\ge1$.  Then, up to
    universal numerical constants,
    \begin{equation}
        \label{eq:diameter-minimax-optimal}
        \mathsf{T}_\varepsilon(\mathsf{G}_{n,\hat D},B)
        =
        \Theta\prt{
            \frac{\Delta L\sigma^2}{nB\varepsilon^4}
            +
            \frac{\Delta L\hat D}{\varepsilon^2}
        }.
    \end{equation}
\end{theorem}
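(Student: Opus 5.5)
The statement asserts a matching lower and upper bound, and the paper says it combines \cite[Theorems~1 and~2]{lu2021optimal}. I would therefore prove the two inequalities separately and check that the hypotheses of the cited results are met. The main adaptations are the common initial point $x^{(0)}$ in place of the origin, and the class $\mathsf{G}_{n,\hat D}$ being defined through graph invariants alone.

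\textbf{Lower bound.} I would invoke \cite[Theorem~1]{lu2021optimal}. The argument there uses the zero-respecting structure of $\mathsf{A}_B$ (following \cite{carmon2021lower}) together with a scaled chain-like hard function in two regimes.

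\emph{Sampling term.} Give every agent the same probabilistic zero-chain function with noisy coordinate reveals. The total number of stochastic queries after $t$ rounds is at most $nBt$. The stochastic chain argument then forces $t\gtrsim \Delta L\sigma^2/(nB\varepsilon^4)$, regardless of the graph.

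\emph{Communication term.} Pick a hard graph in $\mathsf{G}_{n,\hat D}$ that has two nodes $u,v$ at distance $\hat D$; such a graph exists for every $\hat D\le n-1$, for instance a path of length $\hat D$ with the remaining nodes attached near one end. Split the deterministic zero-chain so that odd-indexed links are revealed only at $u$ and even-indexed links only at $v$. Each new coordinate must then travel distance $\hat D$. With $\Theta(\Delta L/\varepsilon^2)$ coordinates to be revealed, this gives $\Omega(\hat D\Delta L/\varepsilon^2)$ rounds.

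A supremum over the class dominates each term separately, so the lower bound is at least the maximum of the two terms, which is within a factor of two of their sum. Translating the initial point to zero is harmless because the hard instances are translation-covariant.

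\textbf{Upper bound.} I would exhibit one method whose guarantee holds uniformly over $\mathsf{G}_{n,\hat D}$, namely DeFacto from \cite[Theorem~2]{lu2021optimal}. DeFacto achieves exact averaging in $\cO(\hat D)$ rounds via a factorization over the graph, for example gather and broadcast along a BFS spanning tree of depth at most $\hat D$. It alternates roughly $\hat D$ rounds of accumulating $B$ local stochastic gradients per round with one exact averaging. Each effective SGD step therefore uses a minibatch of size $nB\hat D$ at a cost of $\cO(\hat D)$ rounds. Standard minibatch SGD analysis for $L$-smooth nonconvex functions gives
\[
K\gtrsim \frac{\Delta L}{\varepsilon^2}+\frac{\Delta L\sigma^2}{nB\hat D\varepsilon^4}
\]
steps. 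Multiplying by $\hat D$ gives the claimed round count. The constants depend only on $\hat D$ and not on the particular graph, which is what makes the bound uniform.

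\textbf{Main obstacle.} I expect the hardest step to be the communication lower bound. It requires care that the zero-respecting propagation argument really charges $\hat D$ rounds per coordinate for every algorithm in $\mathsf{A}_B$, including algorithms with arbitrary auxiliary states. It also requires that the hard graph has exactly diameter $\hat D$ on $n$ nodes, not merely at most $\hat D$. I would handle both points by citing the construction in \cite{lu2021optimal} directly and checking that its graph lies in $\mathsf{G}_{n,\hat D}$.
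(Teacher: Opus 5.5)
Your overall plan matches the paper's, which proves the theorem only by combining \cite[Theorems~1 and~2]{lu2021optimal}. Your DeFacto upper bound is consistent with the paper's account: exact averaging in $R\in\{\hat D,\ldots,2\hat D\}$ rounds, effective mini-batches of size $nBR$, and hence $\cO(\hat D)$ rounds per update. Your sampling term is also fine. The self-contained sketch of the communication term, however, does not give the claimed order.

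If the odd links live only at $u$ and the even links only at $v$, then $f=\frac{1}{n}(f_u+f_v)$. Since $f_u$ and $f_v$ must each be $L$-smooth, you cannot rescale them to undo the factor $1/n$. Take the usual scaled chain $F=F_{\mathrm{odd}}+F_{\mathrm{even}}$, with each half $L$-smooth, $F(0)-\inf F\lesssim L\beta^2T$, and $\|\nabla F\|\gtrsim L\beta$ until the last coordinate is reached. The requirements $\|\nabla f\|\ge\varepsilon$ and $\Delta_f\le\Delta$ then force $\beta\asymp n\varepsilon/L$. The chain therefore has length only $T\asymp\Delta L/(n\varepsilon^2)$, and your argument yields $\Omega(\hat D\Delta L/(n\varepsilon^2))$, a factor $n$ short.

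More generally, if each half is replicated on $b$ agents, then $f=\frac{b}{n}F$ and $T\asymp\frac{b}{n}\cdot\frac{\Delta L}{\varepsilon^2}$. Your suggested graph, a path with all spare nodes attached near one end, has the same defect. Any group kept $\Omega(\hat D)$ away from that cluster contains only $\cO(\hat D)$ agents, which loses a factor $n/\hat D$ when $\hat D\ll n$.

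The missing ingredient is that both halves of the chain must be held by $\Theta(n)$ agents each, in two groups at mutual distance $\Theta(\hat D)$, inside a graph of diameter exactly $\hat D$. This is what the paper means by ``groups separated by a path-like bottleneck.'' One construction that works:
\begin{itemize}
\item Take a path $v_0,\ldots,v_{\hat D}$.
\item Attach the remaining $n-\hat D-1$ nodes as leaves split between $v_1$ and $v_{\hat D-1}$; for $\hat D\ge2$ the diameter stays $\hat D$.
\item Let the two groups be these leaf sets together with roughly the first and last thirds of the path.
\item Trim the groups to a common size $b=\Theta(n)$ and set $f_i=0$ elsewhere.
\end{itemize}
Then $b/n=\Theta(1)$ keeps the chain length at $\Theta(\Delta L/\varepsilon^2)$. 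Under the zero-respecting rule, each new coordinate must cross a distance of at least about $\hat D/3$. Bounded $\hat D$, including $\hat D=1$, is covered by the single-machine $\Omega(\Delta L/\varepsilon^2)$ bound.

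So, beyond the points you flagged (exact diameter and per-coordinate charging), the delicate issue is how many agents carry each half of the chain. If you instead rely on citing the construction of \cite{lu2021optimal} directly, as your closing sentence proposes, your proof coincides with the paper's.
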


For the lower bound, \cite[Theorem~1]{lu2021optimal} constructs, for every
admissible $n,\hat D$ and target accuracy, a graph
$\cG_{\mathrm{hard}}\in\mathsf{G}_{n,\hat D}$ and a hard
optimization-oracle instance such that every $A\in\mathsf{A}_B$ requires
\[
    \Omega\prt{
        \frac{\Delta L\sigma^2}{nB\varepsilon^4}
        +
        \frac{\Delta L\hat D}{\varepsilon^2}
    }
\]
rounds.  The first term follows from the total budget of at most $nB$
oracle queries per round.  The second term is obtained by distributing
a zero-chain construction across groups separated by a path-like bottleneck
of length proportional to $\hat D$.  Information cannot reveal the next relevant
coordinates before traversing this bottleneck.

The graph quantifier is essential: the lower bound identifies one worst-case
graph in $\mathsf{G}_{n,\hat D}$ and does not imply that every graph of diameter
$\hat D$ is equally hard. The matching uniform upper bound in
\eqref{eq:diameter-minimax-optimal} is attained by DeFacto
\cite[Theorem~2]{lu2021optimal}. For a graph of diameter $\hat D$, DeFacto
precomputes an integer $R\in\{\hat D,\ldots,2\hat D\}$ and a graph-dependent
sequence of support-compatible communication matrices whose product is the
exact averaging matrix
\cite[Lemma~1 and Algorithm~1]{lu2021optimal}. Each $2R$-round block comprises
$R$ computation-only rounds followed by $R$ communication-only rounds and
produces one model update; thus, $T$ rounds yield only
$\lfloor T/(2R)\rfloor$ completed updates. Accordingly, DeFacto relies on both
phase separation and a graph-specific factorization of the exact averaging
matrix. Section~\ref{sec:tree-rgt} develops Tree-RGT, which will be shown to
attain the same diameter-based network order without either requirement.

Our focus is the dependence on the network parameters.  We say that a method
$A\in\mathsf{A}_B$ has \emph{uniformly optimal network dependence} over
$\mathsf{G}_{n,\hat D}$ if it admits an upper bound on its
$\varepsilon$-round complexity that is uniform over this graph class and
matches the minimax powers of $n$ and $\hat D$ in
\eqref{eq:diameter-minimax-optimal}.  With $\Delta$, $L$, and $\sigma^2$ fixed,
the two benchmark terms scale as $n^{-1}B^{-1}\varepsilon^{-4}$ and
$\hat D\varepsilon^{-2}$, respectively.  This terminology concerns only
network-parameter dependence and does not assert optimality of every problem-
or instance-dependent coefficient in the underlying convergence bound.

\subsection{Optimality under Mixing-Matrix Constraints}

The notion of uniformly optimal network dependence above is topology based:
both the graph class
$\mathsf{G}_{n,\hat D}$ and the admissible algorithm class $\mathsf{A}_B$ are
specified without a mixing matrix.  A claim that the dependence on a spectral
gap is optimal poses a different and, without additional modeling choices,
not yet fully specified problem.

When network dependence is expressed through quantities derived from a mixing
matrix, the relevant network class consists of graph--matrix pairs rather than
graphs alone.  More specifically, combining the diameter lower bound in
\cite[Theorem~1]{lu2021optimal} with the weighted-path construction underlying
\cite[Corollary~1]{lu2021optimal} yields the following coupled specialization.

\begin{theorem}[Weighted-path inverse-gap lower bound]
    \label{thm:weighted-path-inverse-gap}
    Fix $\Delta,L,\sigma>0$, an integer $n\ge2$, and an oracle budget $B\ge1$.
    There exists a parameter $\tau>0$ satisfying
    $\tau=\Theta\prt{(n-1)^2}$.  For this choice, define
    \begin{equation}
        \label{eq:lu-path-pair-class}
        \widehat{\mathsf{P}}_n
        :=\crk{
            (\cG,\bW):
            \cG\in\mathsf{G}_{n,n-1},\
            \bW\text{ is a mixing matrix on }\cG,\
            \tau_{\bW}\le\tau
        }.
    \end{equation}
    Then
    \begin{equation}
        \label{eq:lu-path-class-lower-bound}
        \inf_{A\in\mathsf{A}_B}
        \sup_{\substack{
            (\cG,\bW)\in\widehat{\mathsf{P}}_n,\\
            (\{f_i\},O)\in\mathsf{I}_n(\Delta,L,\sigma^2)
        }}
        T_\varepsilon(A,f,O,\cG)
        =
        \Omega\prt{
            \frac{\Delta L\sigma^2}{nB\varepsilon^4}
            +
            \frac{\Delta L\sqrt{\tau}}{\varepsilon^2}
        }.
    \end{equation}
\end{theorem}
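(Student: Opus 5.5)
The plan is to choose $\tau$ as the inverse spectral gap of an explicit mixing matrix on the path, and then read the bound off the diameter lower bound of \cite[Theorem~1]{lu2021optimal}. The only graph in $\mathsf{G}_{n,n-1}$ is, up to relabeling, the path $\cG_{\mathrm{p}}$ on $n$ nodes, since a connected graph with $n$ nodes and diameter $n-1$ must be a path. So it suffices to exhibit one mixing matrix $\bW_{\mathrm{p}}$ on $\cG_{\mathrm{p}}$ and set $\tau:=\tau_{\bW_{\mathrm{p}}}$. This makes $(\cG_{\mathrm{p}},\bW_{\mathrm{p}})\in\widehat{\mathsf{P}}_n$, so $\widehat{\mathsf{P}}_n$ is nonempty.

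For $\bW_{\mathrm{p}}$ I will take the lazy path matrix used in the weighted-path construction of \cite[Corollary~1]{lu2021optimal}. It has off-diagonal weights $1/4$ on every path edge and diagonal entries chosen to make each row sum to one: $3/4$ at the two endpoints and $1/2$ in the interior. It is symmetric, doubly stochastic, nonnegative, and its support is exactly $\cG_{\mathrm{p}}$. Its spectrum is explicit: $\bW_{\mathrm{p}}=\bI-\tfrac14\bL_{\mathrm{p}}$, where $\bL_{\mathrm{p}}$ is the path Laplacian, with eigenvalues $2-2\cos(\pi k/n)$ for $k=0,\ldots,n-1$. Hence the eigenvalues of $\bW_{\mathrm{p}}$ are $\tfrac12(1+\cos(\pi k/n))=\cos^2(\pi k/(2n))\in[0,1]$. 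This gives
\[
\lambda_{\bW_{\mathrm{p}}}=\cos^2\prt{\frac{\pi}{2n}},\qquad \tau=\frac{1}{\sin^2(\pi/(2n))}=\Theta(n^2)=\Theta\prt{(n-1)^2},
\]
using $n\ge2$ and $2x/\pi\le\sin x\le x$ on $[0,\pi/2]$.

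For the lower bound, fix any $A\in\mathsf{A}_B$. Restricting the supremum in \eqref{eq:lu-path-class-lower-bound} to the single pair $(\cG_{\mathrm{p}},\bW_{\mathrm{p}})$ can only decrease it. Moreover, $T_\varepsilon(A,f,O,\cG)$ depends only on $\cG$ and not on $\bW$, so the supremum over $\widehat{\mathsf{P}}_n$ is at least the supremum over the graph class $\mathsf{G}_{n,n-1}=\{\cG_{\mathrm{p}}\}$. By \cite[Theorem~1]{lu2021optimal}, equivalently the lower-bound half of Theorem~\ref{thm:lu-de-sa-network-benchmark} with $\hat D=n-1$, that supremum is $\Omega\prt{\Delta L\sigma^2/(nB\varepsilon^4)+\Delta L(n-1)/\varepsilon^2}$. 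Since $\sqrt{\tau}=\Theta(n-1)$, we have $n-1=\Omega(\sqrt\tau)$, and taking the infimum over $A$ gives \eqref{eq:lu-path-class-lower-bound}.

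I expect the main obstacle to be bookkeeping rather than substance. First, the hard graph in \cite[Theorem~1]{lu2021optimal} must lie in $\mathsf{G}_{n,\hat D}$ for $\hat D=n-1$; this holds automatically, because that class contains only the path. Second, the spectral calculation must be made exact, including the $\mu_n$ side. The laziness of $\bW_{\mathrm{p}}$ makes it positive semidefinite, so $\lambda_{\bW_{\mathrm{p}}}=\mu_2$ and the smallest eigenvalue does not need a separate bound.
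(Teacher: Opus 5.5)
Your proposal is correct and follows the paper's proof. Both arguments fix an explicit mixing matrix on the path, set $\tau$ equal to its inverse gap, use that $\mathsf{G}_{n,n-1}$ consists only of relabeled paths and that neither $\mathsf{A}_B$ nor $T_\varepsilon$ depends on $\bW$, and then invoke \cite[Theorem~1]{lu2021optimal} with $\hat D=n-1$ together with $n-1=\Theta(\sqrt{\tau})$.

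The only difference is the matrix. The paper uses the reflecting random-walk matrix $\bW_{\mathrm{rw}}^{(n)}$, with eigenvalues $\cos(k\pi/n)$, and attributes that matrix to the construction behind \cite[Corollary~1]{lu2021optimal}, not the lazy one. Yours is $\frac12\prt{\bI+\bW_{\mathrm{rw}}^{(n)}}$, which has exactly twice the inverse gap; this is immaterial at the $\Theta$ level. You should also state explicitly that relabeling your matrix along the hard path from \cite[Theorem~1]{lu2021optimal} keeps the pair in $\widehat{\mathsf{P}}_n$, since the spectrum is permutation invariant.
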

\begin{proof}
    See Appendix~\ref{pf:thm:weighted-path-inverse-gap}.
\end{proof}

The proof of Theorem~\ref{thm:weighted-path-inverse-gap} identifies a pair
$(\widehat{\cG}_{\mathrm{hard}},\bW_{\mathrm{hard}})
\in\widehat{\mathsf{P}}_n$ and an optimization--oracle instance for which every
$A\in\mathsf{A}_B$ requires
\[
    \Omega\prt{
        \frac{\Delta L\sigma^2}{nB\varepsilon^4}
        +
        \frac{\Delta L}
        {\sqrt{1-\lambda_{\bW_{\mathrm{hard}}}}\,\varepsilon^2}
    }
\]
rounds.  In the minimax sense, this bound applies to methods required to work
uniformly over $\widehat{\mathsf{P}}_n$, although it is witnessed by a single
worst-case pair.  Unlike Theorem~\ref{thm:lu-de-sa-network-benchmark}, it covers
only the coupled weighted-path regime $\hat D=n-1$ and
$\tau_{\bW_{\mathrm{hard}}}=\tau=\Theta(\hat D^2)$, where
$(1-\lambda_{\bW_{\mathrm{hard}}})^{-1/2}=\Theta(\hat D)$.  Thus, the
inverse-gap term reparameterizes the path-diameter barrier rather than imposing
an independently prescribed constraint.  The next corollary shows how its
exponent changes with the graph--matrix pair class.

\begin{corollary}[Reweighted-path inverse-gap lower bound]
    \label{cor:reweighted-path-inverse-gap}
    Fix $\Delta,L,\sigma>0$, an integer $n\ge2$, and an oracle budget $B\ge1$.
    With $\tau$ chosen as in Theorem~\ref{thm:weighted-path-inverse-gap}, let
    $\bar\tau:=\Theta\prt{(n-1)^3}$ and define
    \begin{equation}
        \label{eq:lu-reweighted-path-pair-class}
        \overline{\mathsf{P}}_n
        :=\crk{
            (\cG,\bW):
            \cG\in\mathsf{G}_{n,n-1},\
            \bW\text{ is a mixing matrix on }\cG,\
            \tau_{\bW}\le\bar\tau
        }.
    \end{equation}
    Then
    \begin{equation}
        \label{eq:lu-reweighted-path-class-lower-bound}
        \inf_{A\in\mathsf{A}_B}
        \sup_{\substack{
            (\cG,\bW)\in\overline{\mathsf{P}}_n,\\
            (\{f_i\},O)\in\mathsf{I}_n(\Delta,L,\sigma^2)
        }}
        T_\varepsilon(A,f,O,\cG)
        =
        \Omega\prt{
            \frac{\Delta L\sigma^2}{nB\varepsilon^4}
            +
            \frac{\Delta L\bar\tau^{1/3}}{\varepsilon^2}
        }.
    \end{equation}
\end{corollary}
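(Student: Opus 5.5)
The plan is to reduce Corollary~\ref{cor:reweighted-path-inverse-gap} to Theorem~\ref{thm:weighted-path-inverse-gap} by a monotonicity-in-the-class argument combined with the inverse-gap rescaling of Theorem~\ref{thm:matrix-dependent-gap}. The key point is that the round-complexity functional $T_\varepsilon(A,f,O,\cG)$ depends on the pair $(\cG,\bW)$ only through the physical graph $\cG$. Hence enlarging the admissible pair class can only increase the inner supremum.

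First I will choose $\bar\tau$ to be $\Theta((n-1)^3)$ with $\bar\tau\ge\tau$; for instance $\bar\tau:=(n-1)\tau$ when $n\ge2$. Then $\widehat{\mathsf{P}}_n\subseteq\overline{\mathsf{P}}_n$, since the defining constraint $\tau_{\bW}\le\tau$ implies $\tau_{\bW}\le\bar\tau$. It follows that the supremum over $\overline{\mathsf{P}}_n$ dominates the supremum over $\widehat{\mathsf{P}}_n$ for each fixed $A$, and therefore the infimum over $A\in\mathsf{A}_B$ does too. Theorem~\ref{thm:weighted-path-inverse-gap} then gives the lower bound
\[
    \Omega\prt{\frac{\Delta L\sigma^2}{nB\varepsilon^4}+\frac{\Delta L\sqrt{\tau}}{\varepsilon^2}}.
\]

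Second, I will rewrite $\sqrt{\tau}$ in terms of $\bar\tau$. Because $\tau=\Theta((n-1)^2)$ and $\bar\tau=\Theta((n-1)^3)$, we have $\sqrt{\tau}=\Theta(n-1)=\Theta(\bar\tau^{1/3})$ with universal constants. Substituting this gives the claimed bound \eqref{eq:lu-reweighted-path-class-lower-bound}.

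To address the nonemptiness and informativeness the paper emphasises, I will also exhibit a member of $\overline{\mathsf{P}}_n$ whose inverse gap actually attains order $\bar\tau$. Apply Theorem~\ref{thm:matrix-dependent-gap} to $\bW_{\mathrm{hard}}$ (made positive semidefinite via $(\bI+\bW)/2$ if needed) with $\delta\asymp 1/(n-1)$. This yields a path mixing matrix with $\tau_{\bW}=\Theta((n-1)^3)\le\bar\tau$, showing that the hard graph is realized by pairs spanning the whole inverse-gap range.

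The main obstacle is bookkeeping rather than mathematics. The constants hidden in the $\Theta$ defining $\bar\tau$ must be chosen so that $\bar\tau\ge\tau$ holds exactly, not merely asymptotically, so that the inclusion argument goes through. Choosing $\bar\tau=(n-1)\tau$ explicitly avoids this difficulty.
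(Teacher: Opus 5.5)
Your proof is correct, but its core step differs from the paper's.

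\textbf{The paper's route.} The paper never uses the inclusion $\widehat{\mathsf{P}}_n\subseteq\overline{\mathsf{P}}_n$. Instead it builds an explicit witness. It replaces $\bW_{\mathrm{hard}}$ on the hard path by $\bW_{\mathrm{harder}}=(1-\frac{1}{n})\bI+\frac{1}{n}\bW_{\mathrm{hard}}$ and computes its spectrum $1-(1-\cos(k\pi/n))/n>0$ directly. This gives $\tau_{\bW_{\mathrm{harder}}}=n\tau$, which is what the paper takes as $\bar\tau$. Since neither $\mathsf{A}_B$ nor $T_\varepsilon$ sees the matrix, the hard graph and instance from Theorem~\ref{thm:weighted-path-inverse-gap} remain hard for the new pair. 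The relation $n-1=\Theta(\bar\tau^{1/3})$ then finishes.

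\textbf{Your route.} Your monotonicity argument is more elementary. It needs no eigenvalue computation, only $\bar\tau\ge\tau$, which your explicit normalization $\bar\tau=(n-1)\tau$ secures. It also does not need the matrix-independence of $T_\varepsilon$ that you flag as the key point: enlarging the index set of a supremum can only increase it, whatever the functional.

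\textbf{What each buys.} The paper's construction exhibits a pair sitting exactly at the top of the class, $\tau_{\bW_{\mathrm{harder}}}=\bar\tau$. The subsequent discussion of the exponent changing from $1/2$ to $1/3$ relies on this. Your inclusion argument, conversely, makes transparent that the $\bar\tau^{1/3}$ term is merely a reparametrization of the path-diameter barrier.

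Your optional witness remark recovers the paper's point. The detour through $(\bI+\bW)/2$ is harmless but unnecessary. For $\delta\le 1/2$, the combination $(1-\delta)\bI+\delta\bW_{\mathrm{hard}}$ already has nonnegative eigenvalues $1-\delta(1-\cos(k\pi/n))$. This is how the paper avoids the positive-semidefiniteness hypothesis of Theorem~\ref{thm:matrix-dependent-gap}.
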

\begin{proof}
    See Appendix~\ref{pf:cor:reweighted-path-inverse-gap}.
\end{proof}

Corollary~\ref{cor:reweighted-path-inverse-gap} shows that there exists a pair
$(\widehat{\cG}_{\mathrm{hard}},\bW_{\mathrm{harder}})
\in\overline{\mathsf{P}}_n$ and an optimization--oracle instance for which
every $A\in\mathsf{A}_B$ requires
\[
    \Omega\prt{
        \frac{\Delta L\sigma^2}{nB\varepsilon^4}
        +
        \frac{\Delta L}
        {(1-\lambda_{\bW_{\mathrm{harder}}})^{1/3}\varepsilon^2}
    }
\]
rounds.  Here $D_{\widehat{\cG}_{\mathrm{hard}}}=n-1$ and
$\tau_{\bW_{\mathrm{harder}}}=\bar\tau=\Theta\prt{(n-1)^3}$.  Together with
Theorem~\ref{thm:weighted-path-inverse-gap}, this shows that the inverse-gap
dependence of the minimax complexity is pair-class dependent: the exponent is
$1/2$ over $\widehat{\mathsf{P}}_n$ but $1/3$ over
$\overline{\mathsf{P}}_n$.  Thus, any claim of \emph{uniformly optimal
inverse-gap dependence} must first specify the graph--matrix pair class over
which uniformity is required.  This does not yet complete the minimax model:
one must additionally specify how admissible algorithms are allowed to use the
designated matrix.

\textbf{The graph--matrix pair class.}
A general class is easy to formulate.  For example, one may define
\begin{equation}
    \label{eq:graph-protocol-class}
    \mathsf{P}_{n,\hat D,\hat{\tau}}
    :=
    \crk{
        (\cG,\bW):
        \cG\in\mathsf{G}_{n,\hat D},\
        \bW\text{ is a mixing matrix on }\cG,\
        \tau_{\bW}\le\hat{\tau}
    }.
\end{equation}
The earlier path classes are the coupled specializations
$\widehat{\mathsf{P}}_n=\mathsf{P}_{n,n-1,\tau}$ and
$\overline{\mathsf{P}}_n=\mathsf{P}_{n,n-1,\bar\tau}$, but choosing informative
constraints for general $(n,\hat D,\hat{\tau})$ is more delicate.
Theorem~\ref{thm:matrix-dependent-gap} makes the upper-scale issue explicit:
for a positive-semidefinite mixing matrix $\bW$, the lazy reweighting
$\bW^{(\delta)}=(1-\delta)\bI+\delta\bW$ preserves the support graph but
satisfies $\tau_{\bW^{(\delta)}}=\tau_{\bW}/\delta$.  Thus, topology imposes no
finite upper scale on $\tau_{\bW}$.  If $\hat{\tau}$ is allowed to grow too
freely, the class admits increasingly slow weightings of the same graph, and a
worst-case dependence on $\hat{\tau}$ may reflect this artificial slowing
rather than greater topological difficulty.  At the other extreme,
Theorem~\ref{l:spectral} gives
$\tau_{\bW}\ge(D_{\cG}-1)/\log n$, so
\eqref{eq:graph-protocol-class} is empty whenever
$\hat{\tau}<(\hat D-1)/\log n$.  This necessary condition alone does not
establish nonemptiness, and the examples in Section~\ref{sec:examples} show that
admissible diameter--inverse-gap relations can vary substantially across graph
families and weight choices.  Selecting a nonempty and informative parameter
regime therefore requires additional modeling choices.

\textbf{The algorithm class.}
The broad class $\mathsf{A}_B$ does not make the designated $\bW$ part of its
communication primitive: a method may instead use any support-compatible
weights or routing rule.  Moreover, $T_\varepsilon(A,f,O,\cG)$ has no matrix
argument.  Hence, once the admissible support graphs are fixed, the numerical
spectral gap of the designated matrix can disappear from the minimax problem.
One possible fixed-mixing convention is to let
$\mathsf{A}^{\mathrm{mix}}_{B,\bW}\subseteq\mathsf{A}_B$ contain methods whose
cross-agent aggregations are restricted to applications of
$\by\mapsto\bW\by$, while retaining unrestricted local computation, memory,
auxiliary states, and at most $B$ local oracle queries per round.  Since $\bW$
varies across graph--matrix pairs, a uniform guarantee should not select an
unrelated algorithm separately for each matrix.  We therefore introduce the
single-template class
\[
    \mathfrak{A}_{B}^{\mathrm{mix}}
    :=
    \left\{
        \mathcal{A}:
        \begin{array}{l}
        \mathcal{A}\text{ is one uniformly specified algorithmic template, and}\\
        \mathcal{A}[\bW]\in\mathsf{A}^{\mathrm{mix}}_{B,\bW}
        \text{ for every admissible graph--matrix pair }(\cG,\bW)
        \end{array}
    \right\},
\]
where $\mathcal{A}$ is chosen before the pair and $\mathcal{A}[\bW]$ is its
instance under the designated matrix.  This interface includes fixed-matrix
auxiliary-state methods such as DeTAG \cite{lu2021optimal} and DSMT
\cite{huang2026accelerated}.  It does not include DeFacto \cite{lu2021optimal}, whose exact-consensus
phase uses a graph-dependent sequence of support-compatible matrices;
including such methods requires a richer protocol-valued model, whereas
allowing arbitrary support-compatible matrices would again make $\bW$
bypassable.

With both modeling choices fixed, the corresponding minimax complexity is
\begin{equation}
    \label{eq:fixed-mixing-minimax}
    \mathsf{T}^{\mathrm{mix}}_\varepsilon(\mathsf{P}_{n,\hat D,\hat{\tau}},B)
    :=
    \inf_{\mathcal{A}\in\mathfrak{A}_{B}^{\mathrm{mix}}}
    \sup_{\substack{
        (\cG,\bW)\in\mathsf{P}_{n,\hat D,\hat{\tau}},\\
        (\{f_i\},O)\in\mathsf{I}_n(\Delta,L,\sigma^2)
    }}
    T_\varepsilon(\mathcal{A}[\bW],f,O,\cG).
\end{equation}
The diameter benchmark in Theorem~\ref{thm:lu-de-sa-network-benchmark} and the
two coupled path specializations above do not provide a characterization of
\eqref{eq:fixed-mixing-minimax} for
$\mathsf{P}=\mathsf{P}_{n,\hat D,\hat{\tau}}$ with general, independently
prescribed $(n,\hat D,\hat{\tau})$.  The present paper does not pursue this
distinct spectral minimax problem.  Section~\ref{sec:main-convergence-guarantee}
instead establishes that Tree-RGT has \emph{uniformly optimal network
dependence} over $\mathsf{G}_{n,\hat D}$.

\section{Tree-Routed Gradient Tracking with Finite-Step Communication}
\label{sec:tree-rgt}

Motivated by the diameter-based benchmark in
Section~\ref{sec:lower-bounds-diameter}, we develop Tree-Routed Gradient
Tracking (Tree-RGT), a root-coordinated distributed method over a decentralized
communication network. Its communication mechanism is governed by finite graph
propagation rather than by the asymptotic contraction of a prescribed mixing
matrix.

Given the physical communication graph, we select a rooted shortest-path
spanning tree. Its outward orientation disseminates decision-variable iterates
from the root to the leaves, while its inward orientation aggregates
gradient-tracking information toward the root. If $D$ denotes the rooted tree
depth, then $D\le D_{\cG}$, and the corresponding propagation matrices reach
their rank-one forms after $D$ one-hop steps.

Tree-RGT pipelines these two routing directions. In every synchronous
iteration, each agent performs one round of one-hop tree communication, draws
one stochastic-gradient sample, and completes one local update; dissemination
and aggregation advance concurrently by one tree level. Thus, no multi-gossip
or inner consensus loop is inserted between successive optimization updates.

We first construct the rooted shortest-path tree, then present the local
Tree-RGT updates and their matrix representation, establish the exact
finite-step routing property, and finally show that Tree-RGT has
\emph{uniformly optimal network dependence} over $\mathsf{G}_{n,\hat D}$.

\subsection{Rooted Shortest-Path Communication Trees}

Let $\cG=(\cN,\cE)$ be a connected undirected communication graph, and suppose
that a root node has been prescribed. Without loss of generality, we relabel the
nodes so that the root has index~$1$. This relabeling changes neither the
physical graph nor any graph distance.

Algorithm~\ref{alg:gt} constructs the local routing sets by a
level-synchronous breadth-first search. When a non-root node $i$ is first
discovered, it selects the smallest-index sender from the preceding level as
its parent $p(i)$. Since all candidate parents lie at the same level, this
tie-breaking rule affects only the selected parent, not the assigned depth or
the shortest-path property. For notational uniformity, set $p(1):=1$ and define
\[
    \mathrm{P}(i):=\{p(i)\},
    \qquad
    \mathrm{Ch}(i):=\{j\in\cN:\ p(j)=i\}.
\]

\begin{algorithm}[htbp]
   \caption{Distributed construction of local routing sets for a rooted
   shortest-path spanning tree}
   \label{alg:gt}
    \begin{algorithmic}
    \State {\bfseries Local input:} Each node $i$ knows the network size
    $n\ge2$, its own index, its physical neighbors in $\cG$, and the prescribed
    root index~$1$.
    \State {\bfseries Initialization:} Node $1$ sets $p(1)=1$, $h_1=0$, and
    $\mathrm{P}(1)=\mathrm{Ch}(1)=\{1\}$ and marks itself as discovered;
    every node $i\ne1$ sets $h_i=\infty$ and
    $\mathrm{P}(i)=\mathrm{Ch}(i)=\emptyset$ and marks itself as undiscovered.
    \For{$\ell=0,1,\ldots,n-2$}
        \State Every node $i$ with $h_i=\ell$ sends a discovery message to all
        of its physical neighbors.
        \State Every undiscovered node $j$ that receives at least one message
        selects the sender with the smallest node index as $p(j)$, sets
        $\mathrm{P}(j)=\{p(j)\}$ and $h_j=\ell+1$, and marks itself as
        discovered.
        \State Every newly discovered node $j$ sends an acknowledgment to
        $p(j)$; upon receipt, node $p(j)$ adds $j$ to
        $\mathrm{Ch}(p(j))$.
    \EndFor
    \State {\bfseries Distributed output:} Each node $i$ stores its own local
    routing sets $\mathrm{P}(i)$ and $\mathrm{Ch}(i)$.
\end{algorithmic}
\end{algorithm}

Figure~\ref{fig:algorithm1-spanning-tree} illustrates how the local routing
sets select an active routing tree without modifying the physical topology.
\begin{figure}[ht]
    \centering
    \subfloat[]{
        \includegraphics[width=0.29\textwidth]{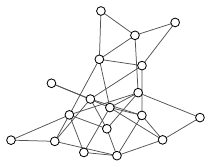}
    }
    \hspace{0.08\textwidth}
    \subfloat[]{
        \includegraphics[width=0.29\textwidth]{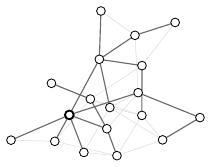}
    }
    \caption{Illustration of Algorithm~\ref{alg:gt}.
    (a) A connected realization of the Erd\H{o}s--R\'enyi random graph model
    \cite{erdos1959random}, with $n=18$ and edge probability $0.22$.
    (b) The rooted shortest-path spanning tree $\cT$ selected by
    Algorithm~\ref{alg:gt}. The thick-outlined node is the prescribed root,
    relabeled as node~$1$. Emphasized edges are the active parent--child routing
    links, while faded edges remain physical links of $\cG$ but are not used
    during the Tree-RGT optimization iterations. Node labels are used for
    deterministic parent tie-breaking and omitted in the drawing.}
    \label{fig:algorithm1-spanning-tree}
\end{figure}

The fixed horizon $\ell=0,\ldots,n-2$ suffices to discover every node because a
connected $n$-node graph has no simple path longer than $n-1$.

For analysis, collect the locally stored parent--child relations into the
bidirected arc set
\[
    \cE_{\cT}
    :=
    \bigcup_{i\in\cN\setminus\{1\}}
    \bigl\{(p(i),i),(i,p(i))\bigr\},
    \qquad
    \cT:=(\cN,\cE_{\cT}).
\]
Every non-root node has one parent in the preceding discovery level, and
following parent links reaches the root. Thus, the underlying undirected graph
of $\cT$ is a spanning tree of $\cG$, represented above by both orientations of
each tree link. All distances associated with $\cT$ refer to this underlying
undirected tree. The notation is analytical: each node stores only its local
sets $\mathrm{P}(i)$ and $\mathrm{Ch}(i)$.

\textbf{Communication over the selected tree.}
After preprocessing, node $i$ communicates only with the physical neighbors in
$\prt{\mathrm{P}(i)\cup\mathrm{Ch}(i)}\setminus\{i\}$; messages from its other
physical neighbors may equivalently be ignored. The self-entry at the root is
implemented by local state retention and requires no physical communication.
Thus, selecting $\cT$ restricts the active links without modifying the physical
topology.

Algorithm~\ref{alg:gt} assigns each node $i$ a discovery level $h_i$. Since
every non-root node at level $h_i$ selects a parent at level $h_i-1$, following
parent links gives $h_i=\operatorname{dist}_{\cT}(1,i)$. Thus, $h_i$ is the
number of one-hop communication steps from the root to node $i$ and represents
its node-specific propagation delay. Define the maximum and average depths by
\begin{equation}
    \label{eq:tree-depth-profile}
    D:=\max_{i\in\cN}h_i,
    \qquad
    D_{\avg}:=\frac{1}{n}\sum_{i\in\cN}h_i.
\end{equation}
We call $D$ the \emph{tree depth} of $\cT$, also known as its height. Unlike
the pairwise tree diameter
$\max_{i,j\in\cN}\operatorname{dist}_{\cT}(i,j)$, $D$ measures the largest
distance from the root, while $D_{\avg}$ is the average distance from the root.

\begin{theorem}[Optimal rooted depth profile]
    \label{thm:bfs-tree-optimal}
    Let $\cG=(\cN,\cE)$ be a connected undirected communication graph with
    $n\ge2$ nodes and prescribed root node~$1$. Let $\cT$ be the rooted tree
    induced by the routing sets produced by Algorithm~\ref{alg:gt} on $\cG$.
    Then
    \begin{equation}
        \label{eq:bfs-pointwise-depth}
        h_i=\operatorname{dist}_{\cG}(1,i)
        \qquad\text{for every }i\in\cN.
    \end{equation}
    Consequently, among all spanning trees $\cT'$ of $\cG$ rooted at node~$1$,
    $\cT$ simultaneously minimizes the tree depth and the average distance to
    the root:
    \begin{align}
        D
        &=
        \min_{\cT'}\max_{i\in\cN}
        \operatorname{dist}_{\cT'}(1,i)
        =
        \max_{i\in\cN}\operatorname{dist}_{\cG}(1,i)
        \le D_{\cG}
        \le 2D, \label{eq:bfs-optimal-D}\\
        D_{\avg}
        &=
        \min_{\cT'}\frac{1}{n}\sum_{i\in\cN}
        \operatorname{dist}_{\cT'}(1,i)
        =
        \frac{1}{n}\sum_{i\in\cN}\operatorname{dist}_{\cG}(1,i).
        \label{eq:bfs-optimal-Davg}
    \end{align}
    In particular, $D$ and $D_{\cG}$ differ by at most a factor of two and
    therefore have the same asymptotic order over any family of graphs.
\end{theorem}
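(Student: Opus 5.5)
The plan is to prove the pointwise identity \eqref{eq:bfs-pointwise-depth} first, by induction on the level $\ell$. Everything else then follows from two elementary facts: a spanning tree of $\cG$ is a subgraph of $\cG$, and the diameter satisfies a triangle inequality through the root.

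\textbf{Step 1: BFS levels are graph distances.} We show by induction on $\ell\ge 0$ that after the iteration with index $\ell-1$, the following holds. The set of discovered nodes is exactly $\{i:\operatorname{dist}_{\cG}(1,i)\le \ell\}$, and every such node has $h_i=\operatorname{dist}_{\cG}(1,i)$.

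The base case $\ell=0$ is the initialization. For the inductive step, consider the iteration with index $\ell$. The senders are exactly the nodes at distance $\ell$, by the inductive hypothesis. An undiscovered node $j$ has $\operatorname{dist}_{\cG}(1,j)\ge\ell+1$. It receives a message if and only if it has a neighbor at distance $\ell$, which holds if and only if $\operatorname{dist}_{\cG}(1,j)=\ell+1$. For one direction, a neighbor at distance $\ell$ gives distance at most $\ell+1$. For the other, the penultimate vertex of a shortest path from $1$ to $j$ is at distance $\ell$. Such a node sets $h_j=\ell+1$, which closes the induction.

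Since $\operatorname{dist}_{\cG}(1,i)\le n-1$ for every $i$, the horizon $\ell\le n-2$ discovers all nodes. Each non-root node's parent lies one level lower. Hence following parent links from $i$ reaches the root in exactly $h_i$ steps, and this parent chain is the unique tree path. Therefore $\operatorname{dist}_{\cT}(1,i)=h_i=\operatorname{dist}_{\cG}(1,i)$.

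\textbf{Step 2: optimality over all rooted spanning trees.} Take any spanning tree $\cT'$ of $\cG$. Every path in $\cT'$ is a path in $\cG$, so $\operatorname{dist}_{\cT'}(1,i)\ge\operatorname{dist}_{\cG}(1,i)=h_i$ for every node $i$. Taking the maximum over $i$ shows that no rooted spanning tree has smaller depth than $D$, and $\cT$ attains this value. Averaging over $i$ gives the same conclusion for the average root distance, which yields \eqref{eq:bfs-optimal-Davg}. The two minimizations are achieved simultaneously by the single tree $\cT$ precisely because the pointwise inequality holds node by node.

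\textbf{Step 3: comparison with the diameter.} First, $\max_i\operatorname{dist}_{\cG}(1,i)\le D_{\cG}$ by the definition of $D_{\cG}$. Second, for any $i,j$ the triangle inequality gives
\[
\operatorname{dist}_{\cG}(i,j)\le\operatorname{dist}_{\cG}(i,1)+\operatorname{dist}_{\cG}(1,j)\le 2D,
\]
so $D_{\cG}\le 2D$. Together these give $D\le D_{\cG}\le 2D$, which proves \eqref{eq:bfs-optimal-D} and the final remark about asymptotic order.

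The only step requiring care is Step 1. It must track the synchronous semantics precisely: a node discovered in the iteration with index $\ell$ does not itself send until the iteration with index $\ell+1$, and the tie-breaking rule only ever chooses among senders at level $\ell$. Steps 2 and 3 are routine once Step 1 is in place.
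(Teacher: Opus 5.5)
Your proposal is correct and follows essentially the same route as the paper's proof: a level-by-level induction showing $h_i=\operatorname{dist}_{\cG}(1,i)$, the observation that every root-to-$i$ path in a spanning tree $\cT'$ is a path in $\cG$, and the triangle inequality through the root for $D_{\cG}\le 2D$. The pointwise inequality is what gives the simultaneous max and average optimality, and your explicit check that the parent chain yields $\operatorname{dist}_{\cT}(1,i)=h_i$ usefully closes the attainment step, which the paper states in the main text rather than inside the appendix proof.
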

\begin{proof}
    See Appendix \ref{pf:thm:bfs-tree-optimal}.
\end{proof}

For the remainder of the paper, $\cT$, $h_i$, $D$, and $D_{\avg}$ refer to
the tree and depth quantities generated from $\cG$ by
Algorithm~\ref{alg:gt}.
The optimality above is with respect to a prescribed root. If the root were
also a design variable, minimizing $D$ and minimizing $D_{\avg}$ could select
different root nodes.

\subsection{Tree-RGT: Pipelined Gradient Tracking over the Tree}

Algorithm~\ref{alg:gt} constructs the bidirected graph $\cT$ associated with
a rooted shortest-path spanning tree of $\cG$. We separate the two orientations
of every tree link and define
\[
    \begin{aligned}
        \cE_{\bR}
        &:=
        \bigl\{(p(i),i):i\in\cN\setminus\{1\}\bigr\},
        &\quad
        \cT_{\bR}&:=(\cN,\cE_{\bR}),\\
        \cE_{\bC}
        &:=
        \bigl\{(i,p(i)):i\in\cN\setminus\{1\}\bigr\},
        &
        \cT_{\bC}&:=(\cN,\cE_{\bC}).
    \end{aligned}
\]
The two arc sets partition $\cE_{\cT}$. The out-arborescence $\cT_{\bR}$
disseminates root iterates to the agents, while the reverse in-arborescence
$\cT_{\bC}$ routes gradient-tracking information back to the root. Together,
they form a root-to-agent-to-root pipeline: a root iterate reaches agent $i$,
where a stochastic gradient is evaluated, and the resulting contribution then
returns to the root. After the initial warm-up, the root aggregate contains one
depth-delayed contribution from every local objective. The corresponding
propagation matrices $\bR$ and $\bC$ are constructed in
Section~\ref{sec:ost-matrix}.

Rather than waiting for a complete broadcast and aggregation between
optimization updates, Tree-RGT advances both streams concurrently by one tree
edge per synchronous iteration. Each iteration uses one communication round and
one fresh stochastic-gradient sample per agent. Only the root performs a
descent update; every non-root agent copies its parent's preceding iterate.
Algorithm~\ref{alg:ostl} formalizes this single-loop construction.

\begin{algorithm}[htbp]
   \caption{Tree-Routed Gradient Tracking (\textbf{Tree-RGT})}
   \label{alg:ostl}
\begin{algorithmic}
    \State {\bfseries Input:} Connected physical communication graph
    $\cG=(\cN,\cE)$, root node $1$, common initial point
    $x^{(0)}\in\reals^p$, stepsize $\gamma>0$, and iteration budget $T$.
    \State {\bfseries Preprocessing:} Before training, the agents jointly
    execute Algorithm~\ref{alg:gt} over $\cG$; each agent $i$ obtains and
    stores its local routing sets $\mathrm{P}(i)$ and $\mathrm{Ch}(i)$.
    \State {\bfseries Initialization:} Each agent $i$ sets
    $x_i^{(0)}=x^{(0)}$, independently draws $\xi_i^{(0)}$, and initializes
    $y_i^{(0)}=g_i(x_i^{(0)},\xi_i^{(0)})$.
    \For{$t=0,1,\ldots,T-1$}
    \For{each agent $i\in\cN$ in parallel}
    \State In one communication round, agent $i$ simultaneously obtains
    $x_j^{(t)}$ from every $j\in\mathrm{P}(i)$ and $y_j^{(t)}$ from every
    $j\in\mathrm{Ch}(i)$; all self-entries are handled locally.
    \State Independently draw a random sample $\xi_i^{(t+1)}$.
    \State Update
        \[
        \begin{aligned}
            x_i^{(t+1)}
            &=
            \begin{cases}
                x_i^{(t)}-\gamma y_i^{(t)}, & i=1,\\
                \displaystyle\sum_{j\in\mathrm{P}(i)}x_j^{(t)}, & i\ne1,
            \end{cases}\\
            y_i^{(t+1)}
            &=
            \sum_{j\in\mathrm{Ch}(i)}y_j^{(t)}
            +g_i(x_i^{(t+1)},\xi_i^{(t+1)})
            -g_i(x_i^{(t)},\xi_i^{(t)}).
        \end{aligned}
        \]
    \EndFor
    \EndFor
    \State {\bfseries Output:} Root iterate $x_1^{(T)}$.
\end{algorithmic}
\end{algorithm}

Figure~\ref{fig:tree-rgt-information-flow} illustrates both the simultaneous
tree communication in one iteration and the resulting depth-dependent
round-trip delay.
\begin{figure}[ht]
    \centering
    \subfloat[]{
        \includegraphics[width=0.34\textwidth]{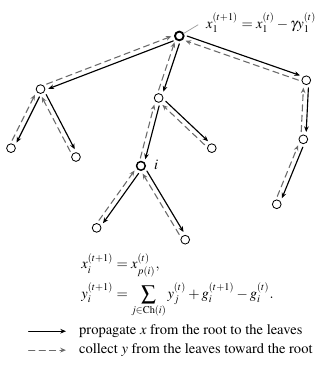}
    }
    \hfill
    \subfloat[]{
        \includegraphics[width=0.62\textwidth]{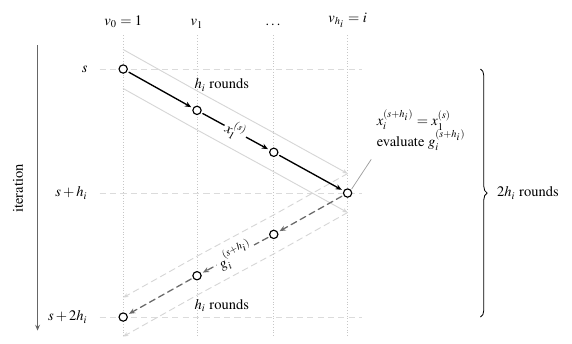}
    }
    \caption{Pipelined information flow in Tree-RGT.
    (a) At iteration $t$, decision-variable information propagates away from
    the root while gradient-tracking information propagates toward it; messages
    sent from the time-$t$ states enter the time-$(t+1)$ updates.
    (b) Along the unique path $v_0=1,\ldots,v_{h_i}=i$, the highlighted trajectory
    traces $x_1^{(s)}$ to agent $i$ and the resulting gradient contribution back
    to the root, a $2h_i$-round trip. Lighter trajectories show adjacent stages
    of the pipeline.}
    \label{fig:tree-rgt-information-flow}
\end{figure}

Because every non-root agent copies its parent's preceding iterate and all
agents start from $x^{(0)}$, induction on $h_i$ gives
\begin{equation}
    \label{eq:tree-rgt-decision-delay}
    x_i^{(t)}
    =
    \begin{cases}
        x^{(0)}, & 0\le t<h_i,\\
        x_1^{(t-h_i)}, & t\ge h_i.
    \end{cases}
\end{equation}
Thus, after an $h_i$-step downward warm-up, agent $i$ holds the root iterate
delayed by exactly $h_i$ steps. This is a deterministic routing delay rather
than an incomplete-mixing error.

For the upward recursion, write
$g_i^{(s)}:=g_i(x_i^{(s)},\xi_i^{(s)})$. Because
$1\in\mathrm{Ch}(1)$, the root retains its preceding aggregate, and the
gradient increments telescope as they propagate upward. Unrolling the
recursion gives
\begin{equation}
    \label{eq:tree-rgt-root-tracker}
    y_1^{(t)}
    =
    \sum_{i\in\cN:\,h_i\le t}
    g_i^{(t-h_i)}
    =
    \sum_{i\in\cN:\,h_i\le t}
    g_i\bigl(x_i^{(t-h_i)},\xi_i^{(t-h_i)}\bigr).
\end{equation}
For $t<D$, this sum covers only agents with $h_i\le t$; for $t\ge D$, it
contains exactly one delayed gradient from every agent. Hence $y_1^{(t)}$ is a
layer-delayed aggregate, not a history sum or the simultaneous current-gradient
sum $\sum_{i\in\cN}g_i^{(t)}$, and the root uses it in its descent update.

Combining \eqref{eq:tree-rgt-root-tracker} and
\eqref{eq:tree-rgt-decision-delay}, agent $i$'s contribution is evaluated at
$x_1^{(t-2h_i)}$ once $t\ge2h_i$. During $D\le t<2D$, all agents are already
represented, although those with $2h_i>t$ still use $x^{(0)}$. After the
uniform round-trip warm-up $t\ge2D$,
\[
    y_1^{(t)}
    =
    \sum_{i\in\cN}
    g_i\bigl(x_1^{(t-2h_i)},\xi_i^{(t-h_i)}\bigr).
\]

\textbf{Comparison with STPP.}
STPP \cite{you2025distributed} is formulated over a strongly connected directed
graph and routes model and gradient-tracking information over two spanning trees
that need not be reversals of one another. Tree-RGT instead uses the two
orientations of a single rooted tree: the outward orientation broadcasts the
root iterates, while the inward orientation returns gradient-tracking
information. Although we construct this tree from a connected undirected graph,
the same mechanism applies to a directed physical graph whenever it contains a
bidirected spanning tree $\cT$, meaning that both directions of every edge in
$\cT$ are available. Such a tree always exists in a connected undirected graph
but need not exist in an arbitrary strongly connected directed graph.
Tree-RGT further implements a root-centered pipeline: only the root performs a
descent update, while every non-root agent forwards a delayed root iterate and
returns gradient-tracking information. This shared-tree structure yields the
exact one-way and round-trip delay identities above, substantially simplifies
the convergence analysis, and leads to the sharper network dependence
summarized in Table~\ref{tab:one-communication-comparison}. It also underlies the
empirical improvements reported in Section~\ref{sec:numerical-experiments}.

\subsection{Matrix Form and Finite-Step Routing Properties}
\label{sec:ost-matrix}

We next write Algorithm~\ref{alg:ostl} in matrix form and record the finite-step
routing properties of its propagation matrices.
Let $\bfe_1\in\reals^n$ be the first standard basis vector and set
$\bpi:=\bfe_1$. Using the local routing sets constructed by
Algorithm~\ref{alg:gt}, define the binary propagation matrices
$\bR,\bC\in\reals^{n\times n}$ elementwise, for all $i,j\in\cN$, by
\begin{equation}
    \label{eq:rc}
    \brk{\bR}_{ij}
    :=
    \begin{cases}
        1, & \text{if } j\in\mathrm{P}(i),\\
        0, & \text{otherwise,}
    \end{cases}
    \qquad
    \brk{\bC}_{ij}
    :=
    \begin{cases}
        1, & \text{if } j\in\mathrm{Ch}(i),\\
        0, & \text{otherwise.}
    \end{cases}
\end{equation}
These matrices encode deterministic tree routing rather than weighted mixing.
Since $j\in\mathrm{Ch}(i)$ if and only if $i\in\mathrm{P}(j)$, these
definitions give $\bC=\bR^\T$. Every row of $\bR$ and every column of $\bC$
contains exactly one unit entry. Consequently,
\[
    \bR\mone=\mone,
    \qquad
    \mone^\T\bC=\mone^\T,
    \qquad
    \bpi^\T\bR=\bpi^\T,
    \qquad
    \bC\bpi=\bpi.
\]
The diagonal entries $\brk{\bR}_{11}=\brk{\bC}_{11}=1$ represent local state
retention at the root. They require no physical self-communication.

For any stacked matrix, left multiplication by $\bR$ replaces each non-root
row by its parent's row, whereas left multiplication by $\bC$ sums the rows
received from the corresponding child set. With the stacked variables defined
in Section~\ref{sec:notations}, set $\bS:=\bpi\bpi^\T$, the root-selection
matrix. Algorithm~\ref{alg:ostl} then has the matrix form
\begin{equation}
    \label{eq:matrix-update}
    \begin{aligned}
        \bX^{(t+1)}
        &=
        \bR\bX^{(t)}-\gamma\bS\bY^{(t)},\\
        \bY^{(t+1)}
        &=
        \bC\bY^{(t)}+\bG^{(t+1)}-\bG^{(t)},
    \end{aligned}
\end{equation}
The common initialization in Algorithm~\ref{alg:ostl} gives
\[
    \bX^{(0)}=\mone(x^{(0)})^\T,
    \qquad
    \bY^{(0)}=\bG^{(0)}.
\]
Thus, $\bR$ and $\bC$ encode the two simultaneous one-hop information flows,
while $\bS$ restricts the descent step to the root.

To quantify how many nodes have been reached after $k$ propagation steps, let
\[
    n_k
    :=
    \bigl|\{i\in\cN:h_i\le k\}\bigr|,
    \qquad k\ge0 \text{ an integer}.
\]
Then $n_k=n$ for all $k\ge D$, and the tail-sum identity gives
\[
    D_{\avg}
    =
    \frac{1}{n}\sum_{i\in\cN}h_i
    =
    \frac{1}{n}\sum_{k=0}^{D-1}\prt{n-n_k}.
\]

\begin{lemma}[Finite-step propagation bound]
    \label{lem:Rk_norm}
    For every integer $k\ge 0$,
    \[
        \begin{aligned}
        \norm{\bR^k-\mone\bpi^\T}_2^2
        &=
        \norm{\bC^k-\bpi\mone^\T}_2^2
        \le 2\prt{n-n_k}.
        \end{aligned}
    \]
\end{lemma}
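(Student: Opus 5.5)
The plan is to compute $\bR^k$ explicitly from the tree structure and then bound the spectral norm by the Frobenius norm. First, the equality of the two norms is immediate: since $\bC=\bR^\T$, we have $\bC^k=(\bR^k)^\T$ and $\bpi\mone^\T=(\mone\bpi^\T)^\T$. Hence $\bC^k-\bpi\mone^\T=(\bR^k-\mone\bpi^\T)^\T$, and the spectral norm is invariant under transposition. It therefore suffices to bound $\norm{\bR^k-\mone\bpi^\T}_2^2$.

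Next, describe $\bR^k$ row by row. For each node $i$, define the $k$-th ancestor $a_k(i)$ recursively by $a_0(i)=i$ and $a_{k+1}(i)=p(a_k(i))$, with the convention $p(1)=1$. By \eqref{eq:rc}, row $i$ of $\bR$ is $\bfe_{p(i)}^\T$. An induction on $k$ then shows that row $i$ of $\bR^k$ is $\bfe_{a_k(i)}^\T$: indeed, $\brk{\bR^{k+1}}_{i:}=\brk{\bR}_{i:}\bR^k=\brk{\bR^k}_{p(i):}$.

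Since each parent link decreases the depth by one (Theorem~\ref{thm:bfs-tree-optimal} and the level structure of Algorithm~\ref{alg:gt}), and the root is fixed by $p$, we get $a_k(i)=1$ exactly when $h_i\le k$. Otherwise $a_k(i)$ is a non-root node at depth $h_i-k\ge1$. Consequently, row $i$ of $\bR^k-\mone\bpi^\T$ equals $\bfe_{a_k(i)}^\T-\bfe_1^\T$. This row is zero when $h_i\le k$. When $h_i>k$, it is a difference of two distinct standard basis vectors, so its squared norm is $2$.

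Finally, use $\norm{\bA}_2^2\le\norm{\bA}_F^2$. The Frobenius norm squared is the sum of the squared row norms, which equals $2\cdot|\{i:h_i>k\}|=2(n-n_k)$, giving the claim. No step here looks genuinely hard. The only point requiring care is the ancestor characterization: one must verify that $a_k(i)=1$ if and only if $h_i\le k$, using that the root is a fixed point of $p$ and that non-root depths strictly decrease along parent links.
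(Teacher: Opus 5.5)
Your proposal is correct and follows the paper's proof essentially step for step. The paper also proves $\brk{\bR^k}_{ij}=\mathbbm{1}_{\{p^k(i)=j\}}$ by the same row-wise induction, observes that $p^k(i)=1$ exactly when $h_i\le k$, and identifies the nonzero rows of $\bR^k-\mone\bpi^\T$ as $\bfe_{p^k(i)}^\T-\bpi^\T$. It then obtains the exact identity $\norm{\bR^k-\mone\bpi^\T}_F^2=2(n-n_k)$ and concludes via transposition and $\norm{\cdot}_2\le\norm{\cdot}_F$.
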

\begin{proof}
    See Appendix~\ref{pf:lem:Rk_norm}.
\end{proof}

For an entrywise interpretation, let $p^0(i):=i$ and
$p^{k+1}(i):=p\bigl(p^k(i)\bigr)$. For all $i,j\in\cN$ and integers $k\ge0$,
\begin{equation}
    \label{eq:RC-power-entrywise}
    \brk{\bR^k}_{ij}
    =
    \mathbbm{1}_{\{p^k(i)=j\}},
    \qquad
    \brk{\bC^k}_{ij}
    =
    \mathbbm{1}_{\{p^k(j)=i\}}.
\end{equation}
Thus, the first column of $\bR^k$ marks the agents with $h_i\le k$, while
column $j\ne1$ marks the agents whose $k$-th ancestor is $j$. Since
$\bC^k=(\bR^k)^\T$, each column of $\bC^k$ follows one
agent's information for $k$ steps toward the root, where it remains after
arrival.

\begin{corollary}[Exact finite-step routing]
    \label{lem:Rk_d}
    At the tree depth $D$,
    \[
        \bR^D=\mone\bpi^\T,
        \qquad
        \bC^D=\bpi\mone^\T .
    \]
    The same identities hold for every integer $k\ge D$, and $D$ is the
    smallest nonnegative integer exponent for which they hold.
\end{corollary}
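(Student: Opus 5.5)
The proof will work directly from the entrywise formula \eqref{eq:RC-power-entrywise}, $\brk{\bR^k}_{ij}=\mathbbm{1}_{\{p^k(i)=j\}}$. Lemma~\ref{lem:Rk_norm} could also be used, but the entrywise route gives exactness and minimality in one pass. Two structural facts do all the work. First, since $p(1)=1$, the root is a fixed point of the ancestor map. Second, for a non-root node $i$, following parent links decreases the level by exactly one at each step, so $h_{p(i)}=h_i-1$. By induction on $k$, we get $p^k(i)=1$ whenever $k\ge h_i$, and for $k<h_i$ the node $p^k(i)$ lies at level $h_i-k\ge1$, so it is not the root.

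\textbf{Sufficiency.} Fix $k\ge D$. Every $i\in\cN$ has $h_i\le D\le k$, so $p^k(i)=1$ for all $i$. Hence $\brk{\bR^k}_{ij}=\mathbbm{1}_{\{j=1\}}=\brk{\mone\bpi^\T}_{ij}$, which gives $\bR^k=\mone\bpi^\T$. Transposing and using $\bC=\bR^\T$ gives $\bC^k=(\bR^k)^\T=\bpi\mone^\T$. The case $k=D$ is included.

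\textbf{Minimality.} Let $0\le k<D$. Choose a node $i^\star$ with $h_{i^\star}=D$, which exists by the definition of $D$ in \eqref{eq:tree-depth-profile}. Then $p^k(i^\star)$ is at level $D-k\ge1$, so it is not the root. Consequently $\brk{\bR^k}_{i^\star 1}=0\ne1=\brk{\mone\bpi^\T}_{i^\star1}$, and $\bR^k\ne\mone\bpi^\T$. Transposition likewise gives $\bC^k\ne\bpi\mone^\T$. The case $k=0$ needs no separate treatment, since $D\ge1$ for $n\ge2$.

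As a consistency check, the same conclusion follows from Lemma~\ref{lem:Rk_norm}. For $k\ge D$ we have $n_k=n$, so the norm vanishes. For $k<D$ we have $n_k<n$, and the explicit entry computed above shows the difference is nonzero. The only step that needs care is the level-decrement identity $h_{p(i)}=h_i-1$. It follows directly from Algorithm~\ref{alg:gt}, since a node discovered at level $\ell+1$ selects its parent among senders at level $\ell$. It is also consistent with Theorem~\ref{thm:bfs-tree-optimal}. Everything else is routine bookkeeping.
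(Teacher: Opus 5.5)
Your proof is correct and is essentially the paper's argument. The paper notes that $n_k=n$ exactly when $k\ge D$ and invokes Lemma~\ref{lem:Rk_norm}, whose proof rests on the same entrywise formula \eqref{eq:RC-power-entrywise} and the same fact that $p^k(i)=1$ iff $h_i\le k$; your explicit entry $\brk{\bR^k}_{i^\star1}=0$ is the right way to get minimality, since the lemma as stated is only an upper bound and that direction actually needs the Frobenius identity \eqref{eq:Rk-residual-frobenius} from its proof or an entry like yours.
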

\begin{proof}
    See Appendix~\ref{pf:lem:Rk_d}.
\end{proof}

For any stacked matrix $\bU$, the identity
$\bR^D\bU=\mone\bpi^\T\bU$ replicates the root row at every agent, whereas
$\bC^D\bU=\bpi\mone^\T\bU$ places the sum of all rows at the root. Hence, both
routing directions complete exactly after $D$ one-hop propagation steps.

\subsection{Convergence and Uniformly Optimal Network Dependence of Tree-RGT}
\label{sec:main-convergence-guarantee}

Section~\ref{sec:lower-bounds-diameter} establishes the diameter-based minimax
benchmark. We first derive a graph-specific bound in terms of the rooted depth
profile and then use Theorem~\ref{thm:bfs-tree-optimal} to obtain a bound that
is uniform over $\mathsf{G}_{n,\hat D}$. The supporting estimates are developed
in Section~\ref{sec:convergence-analysis}.

\begin{theorem}[Convergence of Tree-RGT]
    \label{thm:convergence}
    Let $\cG=(\cN,\cE)$ be a connected undirected communication graph with
    $n\ge2$, and let $\cT$, $D$, and $D_{\avg}$ be generated by
    Algorithm~\ref{alg:gt}. Suppose Assumptions~\ref{a.smooth}
    and~\ref{a.var} hold, and define
    \[
        \Delta_f:=f(x^{(0)})-f^\star>0,
        \qquad
        H_0:=\frac{1}{n}\sum_{i=1}^{n}
        \norm{\nabla f_i(x^{(0)})}^2.
    \]
    For any integer iteration budget $T\ge1$, run Algorithm~\ref{alg:ostl} with
    \[
        \gamma
        =
        \min\crk{
            \frac{\sqrt{\Delta_f}}{\sqrt{nTL\sigma^2}},
            \frac{1}{40nDL}
        }.
    \]
    Then its root trajectory satisfies
    \[
        \frac{1}{T}\sum_{t=0}^{T-1}
        \expect\norm{\nabla f(x_1^{(t)})}^2
        \le
        \frac{60\sqrt{\Delta_f L\sigma^2}}{\sqrt{nT}}
        +\frac{400D\Delta_f L}{T}
        +\frac{50D_{\avg}H_0}{T}.
    \]
\end{theorem}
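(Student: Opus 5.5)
The plan is to analyze the root recursion as delayed SGD on $f$. Write $x^{(s)}:=x_1^{(s)}$, with the convention $x^{(s)}=x^{(0)}$ for $s<0$. By \eqref{eq:tree-rgt-decision-delay} and \eqref{eq:tree-rgt-root-tracker}, the root performs
\[
x^{(t+1)}=x^{(t)}-\gamma\sum_{i\in\cN:\,h_i\le t} g_i\bigl(x^{(t-2h_i)},\xi_i^{(t-h_i)}\bigr).
\]
Each sample $\xi_i^{(t-h_i)}$ is fresh relative to the point at which it is evaluated: $x^{(t-2h_i)}$ depends only on samples up to time $t-2h_i-1$ along that path. I will decompose
\[
y_1^{(t)}=n\nabla f(x^{(t)})+\bigl(\textstyle\sum_i\nabla f_i(x^{(t-2h_i)})-n\nabla f(x^{(t)})\bigr)-\sum_{i:\,h_i>t}\nabla f_i(x^{(0)})+\sum_{i:\,h_i\le t}\epsilon_i^{(t-h_i)},
\]
where $\epsilon_i^{(s)}:=g_i^{(s)}-\nabla f_i(x_i^{(s)})$ are zero-mean noises. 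I then apply the descent lemma with effective stepsize $\eta=n\gamma$:
\[
\expect f(x^{(t+1)})\le \expect f(x^{(t)})-\tfrac{\eta}{2}\expect\|\nabla f(x^{(t)})\|^2+\tfrac{\eta}{2}\expect\bigl\|\tfrac1n\textstyle\sum_i(\text{drift and warm-up terms})\bigr\|^2+\tfrac{L\gamma^2}{2}\expect\|y_1^{(t)}\|^2+(\text{noise cross term}).
\]

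\textbf{Handling the noise.} The cross term between $\nabla f(x^{(t)})$ and the noise does not vanish, because $\epsilon_i^{(t-h_i)}$ may have influenced $x^{(t)}$ (since $t-h_i<t$). I will handle this by comparing with $\nabla f(x^{(t-h_i)})$, or more cleanly by conditioning on the time the noise was generated. Using $L$-smoothness, the correlation contributes $\gamma L\|x^{(t)}-x^{(t-h_i)}\|\,\|\epsilon\|$, and this is absorbed into the drift terms. The variance contributes $L\gamma^2 n\sigma^2$ per step, since the samples are independent across agents; this gives the $\sqrt{\Delta_fL\sigma^2/(nT)}$ term after the choice of $\gamma$.

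\textbf{Delay drift.} For $t\ge 2h_i$, I bound
\[
\|\nabla f_i(x^{(t-2h_i)})-\nabla f_i(x^{(t)})\|^2\le L^2\cdot 2h_i\sum_{s=t-2h_i}^{t-1}\gamma^2\|y_1^{(s)}\|^2.
\]
Averaging over $i$ and summing over $t$, each $\|y_1^{(s)}\|^2$ appears with weight at most $O(L^2\gamma^2 D^2)$. I then bound $\|y_1^{(s)}\|^2\lesssim n^2\|\nabla f(x^{(s)})\|^2+n^2(\text{drift})+n\sigma^2+(\text{warm-up})$. The condition $\gamma\le 1/(40nDL)$ makes $\eta LD\le 1/40$, so the recursive drift term can be absorbed into $\tfrac{\eta}{4}\|\nabla f\|^2$; the noise part adds only a lower-order $\gamma^3$ term, which is dominated by the main one.

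\textbf{Warm-up terms.} These are the contributions for $t<2h_i$: agents still evaluate at $x^{(0)}$, or are missing entirely when $h_i>t$. Their error is at most $\|\nabla f_i(x^{(0)})-\nabla f_i(x^{(t)})\|$ or $\|\nabla f_i(x^{(0)})\|$, and each lasts at most $2h_i$ steps. Summing $\frac1n\sum_i 2h_i\|\nabla f_i(x^{(0)})\|^2$ by Cauchy--Schwarz gives a term of order $\eta D_{\avg}H_0$. After telescoping and dividing by $\eta T/4$, this yields the $D_{\avg}H_0/T$ term, the $D\Delta_f L/T$ term comes from $\Delta_f/(\eta T)$ when $\gamma=1/(40nDL)$, and balancing the two branches of the $\gamma$ minimum gives the stated constants.

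\textbf{Main obstacle.} The main difficulty is bookkeeping the bound on $\sum_t\|y_1^{(t)}\|^2$ so that it closes without circularity, so that constants like $60$, $400$ and $50$ come out, and so that $H_0$ is weighted only by $D_{\avg}$ rather than by $D$. I would try first to define a Lyapunov-free approach: sum all inequalities over $t$ and solve the resulting linear inequality for $\sum_t\expect\|\nabla f(x^{(t)})\|^2$.
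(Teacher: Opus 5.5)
Your overall route, treating the root as delayed SGD and bounding the drift $x^{(t)}-x^{(t-2h_i)}$ directly by root increments, differs from the paper's and could work. But the noise is mishandled, starting with the cross term, which in fact vanishes exactly. Your claim that $\epsilon_i^{(t-h_i)}$ ``may have influenced $x^{(t)}$'' confuses the time a sample is drawn with the time it reaches the root. By \eqref{eq:tree-rgt-root-tracker} and \eqref{eq:tree-rgt-decision-delay}, $\xi_i^{(s)}$ first reaches the root in $y_1^{(s+h_i)}$, so it can affect $x_1^{(r)}$ only for $r\ge s+h_i+1$. With $s=t-h_i$, neither $x_1^{(t)}$ nor the evaluation point $x_i^{(t-h_i)}$ depends on it. Condition on the $\sigma$-algebra generated by all samples other than $\xi_i^{(t-h_i)}$; $\cF_{t-h_i}$, the information at generation time, does not work because $x_1^{(t)}$ is not measurable with respect to it. This gives $\expect\ip{\nabla f(x_1^{(t)})}{\epsilon_i^{(t-h_i)}}=0$, which is exactly Lemma~\ref{lem:independ1}.

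Your smoothness workaround is not a harmless substitute. The increment $x^{(t)}-x^{(t-h_i)}=-\gamma\sum_{r=t-h_i}^{t-1}y_1^{(r)}$ itself carries accumulated noise with second moment up to $\gamma^2h_in\sigma^2$. So bounding $\gamma L\,\expect[\norm{x^{(t)}-x^{(t-h_i)}}\,\norm{\epsilon_i^{(t-h_i)}}]$ by Cauchy--Schwarz or Young gives, at best, a per-step term of order $\gamma^2L\sigma^2\sqrt{n}\sum_i\sqrt{h_i}\ge\gamma^2L\sigma^2\sqrt{n}\,(n-1)$. That is roughly $\sqrt n$ times the genuine variance term $L\gamma^2n\sigma^2/2$. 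After dividing by $\eta=n\gamma$ and inserting $\gamma=\sqrt{\Delta_f/(nTL\sigma^2)}$, it is of order $\sqrt{\Delta_fL\sigma^2/T}$, so the linear speedup in the first term of the theorem is lost. Re-tuning $\gamma$ gives at best $n^{-1/4}T^{-1/2}$.

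You also overcount the noise inside the delay drift. Applying $\norm{\sum_s\gamma y_1^{(s)}}^2\le 2h_i\sum_s\gamma^2\norm{y_1^{(s)}}^2$ to the full $y_1^{(s)}$ charges the noise as $\gamma^2(2h_i)^2n\sigma^2$ instead of $\gamma^2(2h_i)n\sigma^2$. In your summed descent inequality this gives $2n\gamma^3L^2\sigma^2\sum_ih_i^2$ per step, whose ratio to $L\gamma^2n\sigma^2/2$ is $4\gamma L\sum_ih_i^2$. Under $\gamma\le 1/(40nDL)$ that ratio is only bounded by $D_{\avg}/10$, which is of order $D$ on a path rooted at an endpoint. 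So the term is not ``dominated by the main one'' and leaves an extra factor $D$ on the leading rate. The fix is to split each $y_1^{(s)}$ into conditional mean and noise before Cauchy--Schwarz. Noises reaching the root at different times are orthogonal by the same conditioning argument, as the paper uses for $\bM_1^{(t)}$ in Lemma~\ref{lem:PiX}.

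Finally, $\frac1n\sum_ih_i\norm{\nabla f_i(x^{(0)})}^2$ is not $\cO(D_{\avg}H_0)$ in general. It equals $DH_0$ when only a deepest agent has a nonzero initial gradient, and Cauchy--Schwarz does not convert it. Instead bound $\norm{\sum_{i:\,h_i>t}\nabla f_i(x^{(0)})}^2\le(n-n_t)\,nH_0$ and use $\sum_{t\ge0}(n-n_t)=nD_{\avg}$, as the paper does via Lemma~\ref{lem:Rk_norm}. With these three repairs your direct delayed-SGD argument does close under $\gamma\le 1/(40nDL)$. It is then a genuinely shorter route than the paper's, which goes through the disagreement $\hat{\bX}^{(t)}$ and Lemmas~\ref{lem:X_diff} and~\ref{lem:PiX}.
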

\begin{proof}
    See Appendix~\ref{pf:thm:convergence}.
\end{proof}

\begin{remark}[Transient iterations]
    With $\Delta_f$, $L$, $\sigma^2$, and $H_0$ fixed, and using
    $D_{\avg}\le D\le D_{\cG}$, the term $\cO(1/\sqrt{nT})$ dominates once
    $T$ exceeds a problem-dependent constant multiple of $nD_{\cG}^2$.
    Thus, on every connected undirected graph $\cG$, Tree-RGT achieves linear
    speedup after $\cO\prt{nD_{\cG}^2}$ transient iterations.
\end{remark}

\paragraph{The role of the average depth.}
The tree depth $D$ controls the worst-case routing delay, whereas the tail-sum
identity in Section~\ref{sec:ost-matrix} identifies $D_{\avg}$ with the
normalized cumulative number of agents not yet reached by the root.
Lemma~\ref{lem:Rk_norm} converts this profile into a propagation bound, and
Lemma~\ref{lem:PiX} shows how it enters the delayed-information disagreement.
Thus, although $D_{\avg}$ appears in Theorem~\ref{thm:convergence} through the
initialization term, it is determined by the routing geometry.

\begin{corollary}[Accuracy complexity with $B$-sample mini-batches]
    \label{cor:tree-rgt-accuracy}
    Under the assumptions of Theorem~\ref{thm:convergence}, fix an integer
    $B\ge1$. At each stochastic-gradient evaluation in
    Algorithm~\ref{alg:ostl}, every agent $i$ independently draws a mini-batch
    $\boldsymbol{\xi}_i=(\xi_{i,1},\ldots,\xi_{i,B})$ from its local oracle and
    replaces $g_i(x,\xi_i)$ with the estimator
    \[
        g_{i,B}(x,\boldsymbol{\xi}_i)
        :=
        \frac{1}{B}\sum_{b=1}^B g_i(x,\xi_{i,b})
    \]
    For any integer $T\ge1$, choose
    \[
        \gamma_B
        =
        \min\crk{
            \frac{\sqrt{B\Delta_f}}{\sqrt{nTL\sigma^2}},
            \frac{1}{40nDL}
        }.
    \]
    Then the root trajectory satisfies
    \[
        \frac{1}{T}\sum_{t=0}^{T-1}
        \expect\norm{\nabla f(x_1^{(t)})}^2
        \le
        \frac{60\sqrt{\Delta_f L\sigma^2}}{\sqrt{nBT}}
        +\frac{400D\Delta_f L}{T}
        +\frac{50D_{\avg}H_0}{T}.
    \]
    Consequently, for any $\varepsilon>0$, it suffices to take
    \begin{equation}
        \label{eq:tree-rgt-epsilon-trajectory}
        T
        =
        \left\lceil
            \frac{14400\Delta_f L\sigma^2}{nB\varepsilon^4}
            +
            \frac{800D\Delta_f L+100D_{\avg}H_0}{\varepsilon^2}
        \right\rceil
    \end{equation}
    to guarantee
    \[
        \frac{1}{T}\sum_{t=0}^{T-1}
        \expect\norm{\nabla f(x_1^{(t)})}^2
        \le \varepsilon^2.
    \]
\end{corollary}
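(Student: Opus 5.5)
The plan is to reduce the corollary to Theorem~\ref{thm:convergence} by viewing the mini-batch estimator as a new oracle, and then to solve the resulting three-term bound for $T$.

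\textbf{Step 1: the mini-batch oracle satisfies Assumption~\ref{a.var} with variance $\sigma^2/B$.} For fixed $x$, the estimator $g_{i,B}(x,\boldsymbol{\xi}_i)$ is unbiased, since each summand is unbiased for $\nabla f_i(x)$. The $B$ samples are independent, so the errors $g_i(x,\xi_{i,b})-\nabla f_i(x)$ are independent and mean zero, and their cross terms vanish in expectation. This gives
\[
\expect\norm{g_{i,B}(x,\boldsymbol{\xi}_i)-\nabla f_i(x)}^2\le\frac{\sigma^2}{B}.
\]
The mini-batches $\boldsymbol{\xi}_i^{(t)}$ are drawn independently across agents and iterations, so the mutual-independence requirement also holds. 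Assumption~\ref{a.smooth} is unaffected, and so are $\Delta_f$, $H_0$, $D$ and $D_{\avg}$, because $H_0$ involves only true gradients.

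\textbf{Step 2: apply Theorem~\ref{thm:convergence} with $\sigma^2$ replaced by $\sigma^2/B$.} Under this replacement the theorem's stepsize becomes $\min\{\sqrt{B\Delta_f}/\sqrt{nTL\sigma^2},\,1/(40nDL)\}=\gamma_B$. Its first error term becomes $60\sqrt{\Delta_f L\sigma^2}/\sqrt{nBT}$, and the other two terms are unchanged. This gives the displayed trajectory bound. The only point to check is that the proof of Theorem~\ref{thm:convergence} uses the oracle only through Assumption~\ref{a.var}, which the statement "under the assumptions of Theorem~\ref{thm:convergence}" permits.

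\textbf{Step 3: accuracy complexity.} I would require each of the three terms to be at most $\varepsilon^2/2$, $\varepsilon^2/4$ and $\varepsilon^2/4$ respectively.
\begin{itemize}
\item The first condition, $60\sqrt{\Delta_f L\sigma^2/(nBT)}\le\varepsilon^2/2$, is equivalent to $T\ge 14400\,\Delta_f L\sigma^2/(nB\varepsilon^4)$.
\item The second condition, $400D\Delta_f L/T\le\varepsilon^2/4$, needs $T\ge1600D\Delta_fL/\varepsilon^2$.
\item The third condition, $50D_{\avg}H_0/T\le\varepsilon^2/4$, needs $T\ge200D_{\avg}H_0/\varepsilon^2$.
\end{itemize}
These last two thresholds are twice the constants in \eqref{eq:tree-rgt-epsilon-trajectory}, so the even split is not what the statement uses.

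A better allocation uses the fact that the chosen $T$ is at least the sum $a+b$ of its two displayed parts, where $a=14400\Delta_fL\sigma^2/(nB\varepsilon^4)$ and $b=(800D\Delta_fL+100D_{\avg}H_0)/\varepsilon^2$. Then:
\begin{itemize}
\item The first term is at most $60\sqrt{\Delta_fL\sigma^2/(nB\,a)}=\varepsilon^2/2$, since $T\ge a$.
\item The remaining terms satisfy
\[
\frac{400D\Delta_fL+50D_{\avg}H_0}{T}\le\frac{400D\Delta_fL+50D_{\avg}H_0}{b}=\frac{\varepsilon^2}{2},
\]
since $T\ge b$.
\end{itemize}
Adding the two halves gives $\varepsilon^2$, with exactly the constants $14400$, $800$ and $100$. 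The ceiling only increases $T$, and every term is decreasing in $T$, so the bound holds.

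The main obstacle is Step 2: confirming that Theorem~\ref{thm:convergence} and its proof tolerate the substituted oracle. In particular, the initialization $y_i^{(0)}=g_{i,B}(x^{(0)},\cdot)$ and the gradient-tracking increments must rely only on the unbiasedness, variance and independence properties verified in Step 1.
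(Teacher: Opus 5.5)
Your proposal is correct and matches the paper's proof: the paper also applies Theorem~\ref{thm:convergence} with $\sigma^2/B$ in place of $\sigma^2$. It then uses $T\ge 14400\Delta_f L\sigma^2/(nB\varepsilon^4)$ and $T\ge(800D\Delta_fL+100D_{\avg}H_0)/\varepsilon^2$ to bound the stochastic term and the remaining two terms by $\varepsilon^2/2$ each, exactly your second allocation.
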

\begin{proof}
    See Appendix~\ref{pf:cor:tree-rgt-accuracy}.
\end{proof}

\begin{remark}[Uniformly optimal network dependence]
    \label{rem:tree-rgt-uniform-optimality}
    Fix an admissible optimization--oracle instance. Let
    $\mathsf{T}_\varepsilon^{\mathrm{Tree\text{-}RGT}}
    \prt{\mathsf{G}_{n,\hat D},B}$ denote the worst-case
    $\varepsilon$-round complexity of $B$-sample Tree-RGT over
    $\mathsf{G}_{n,\hat D}$, where the algorithm outputs
    $\widehat x=x_1^{(\widehat t)}$ and $\widehat t$ is uniformly distributed
    over $\{0,\ldots,T-1\}$. The fixed problem instance and all associated
    problem-dependent quantities are suppressed from the notation.
    Corollary~\ref{cor:tree-rgt-accuracy}, Jensen's inequality, and
    $D_{\avg}\le D\le D_{\cG}=\hat D$ give
    \begin{equation}
        \label{eq:tree-rgt-class-complexity}
        \mathsf{T}_\varepsilon^{\mathrm{Tree\text{-}RGT}}
        \prt{\mathsf{G}_{n,\hat D},B}
        =
        \cO\prt{
            \frac{1}{nB\varepsilon^4}
            +
            \frac{\hat D}{\varepsilon^2}
        }.
    \end{equation}
    This bound is uniform over $\mathsf{G}_{n,\hat D}$ and matches the powers
    of $n$ and $\hat D$ in \eqref{eq:diameter-minimax-optimal}. Hence Tree-RGT
    has \emph{uniformly optimal network dependence} over
    $\mathsf{G}_{n,\hat D}$ in the sense of
    Section~\ref{sec:lower-bounds-diameter}.
\end{remark}

\paragraph{One-time tree-construction cost.}
Algorithm~\ref{alg:gt} is executed only once and takes at most $n$ synchronous
control rounds, using one-bit discovery and acknowledgment messages. Thus a run
of $T$ Tree-RGT iterations takes at most $T+n$ wall-clock rounds when this setup
is included. The complexities above count optimization rounds; for a fixed
network in the high-accuracy regime $T_\varepsilon\to\infty$, the additive
setup cost is lower order and is further amortized whenever the routing tree is
reused.

\section{Convergence Analysis}
\label{sec:convergence-analysis}

This section develops the technical estimates used in the proof of
Theorem~\ref{thm:convergence}. We first combine the Tree-RGT recursion with the
finite-step routing identities from Section~\ref{sec:ost-matrix} to obtain an
exact finite-memory representation of the root trajectory and the network
disagreement. We then control the stochastic-gradient error, the root
increments, and the disagreement through a sequence of coupled estimates.
Finally, these estimates are inserted into a descent inequality to establish
the stated convergence rate. Elementary deterministic and probabilistic tools
used within the detailed proofs are collected in
Appendix~\ref{app:convergence-proofs}.

\subsection{Finite-Memory Representation}

For notational convenience, throughout this section we use the shorthand
$\nabla\bF^{(t)}:=\nabla\bF(\bX^{(t)})$ and write the stacked
stochastic-gradient error as
$\bTh^{(t)}:=\bG^{(t)}-\nabla\bF^{(t)}$.

To expose the finite-memory structure of \eqref{eq:matrix-update}, we introduce
two decompositions specific to the convergence analysis. The first separates
the root-aligned component of an iterate from the discrepancy between the
agents and the root. Define
\begin{equation}
    \label{eq:analysis-projections}
    \bPi_{\bR}:=\bI-\mone\bpi^\T,
    \qquad
    \bPi_{\bC}:=\bI-\bpi\mone^\T.
\end{equation}
Accordingly, set
\[
\begin{aligned}
    \bar{\bX}^{(t)}
    &:= \mone\bpi^\T \bX^{(t)},
    \qquad
    \Delta\bar{\bX}^{(t)}
    := \bar{\bX}^{(t+1)}-\bar{\bX}^{(t)},\\
    \hat{\bX}^{(t)}
    &:= \bPi_{\bR}\bX^{(t)},
    \qquad
    \nabla\bar{\bF}^{(t)}
    := \nabla \bF^{(t)}-\nabla \bF(\bar{\bX}^{(t)}).
\end{aligned}
\]

Here, $\bar{\bX}^{(t)}$ replicates the root iterate across all rows,
$\hat{\bX}^{(t)}$ measures the resulting root-relative disagreement, and
$\nabla\bar{\bF}^{(t)}$ is the gradient mismatch generated by that
disagreement. The second decomposition resolves the push dynamics according to
the tree layer whose information reaches the root. For each
$k\in\{0,\ldots,D\}$, define the $k$-th tree layer by
\[
    \cI_{1,k}:=\bigl\{i\in\cN:h_i=k\bigr\},
\]
and let $\bfe_{1,k}\in\reals^n$ denote its indicator vector. In particular,
$\cI_{1,0}=\{1\}$ and $\bfe_{1,0}=\bpi$. To identify the information
contributed by each layer, define the layer-increment matrices
\begin{equation}
    \label{eq:Ak}
    \bA_0:=\bI,
    \qquad
    \bA_k:=\bC^k-\bC^{k-1},\quad k\ge 1.
\end{equation}
The next identity makes this interpretation precise: after left multiplication
by $\bpi^\T$, $\bA_k$ selects exactly the nodes at depth $k$.
\begin{lemma}[Layer-increment identity]
    \label{lem:Ak}
    For every $0\le k\le D$,
    \[
        \bpi^\T\bA_k=\bfe_{1,k}^\T.
    \]
    Moreover, $\bA_k=\mathbf{0}$ for all $k>D$.
\end{lemma}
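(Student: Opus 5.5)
The plan is to read off the entries of $\bpi^\T\bC^k$ directly from the entrywise formula \eqref{eq:RC-power-entrywise} and then take differences. Since $\bpi=\bfe_1$, the row vector $\bpi^\T\bC^k$ is the first row of $\bC^k$. By \eqref{eq:RC-power-entrywise}, its $j$-th entry is
\[
    \brk{\bC^k}_{1j}=\mathbbm{1}_{\{p^k(j)=1\}}.
\]
It therefore suffices to show that $p^k(j)=1$ holds if and only if $h_j\le k$.

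\textbf{Step 1: ancestors and depths.} I would argue from Algorithm~\ref{alg:gt}: every non-root node $j$ has a parent with $h_{p(j)}=h_j-1$, while $p(1)=1$ and $h_1=0$. By induction on $k$, this gives
\[
    p^k(j)=1 \text{ if } k\ge h_j,
    \qquad
    h_{p^k(j)}=h_j-k\ge1 \text{ if } k<h_j,
\]
so $p^k(j)\neq1$ in the latter case. Hence $\bpi^\T\bC^k$ is the indicator vector of $\{j:h_j\le k\}$, i.e. $\sum_{\ell=0}^{k}\bfe_{1,\ell}^\T$; at $k=0$ this is $\bfe_{1,0}^\T=\bpi^\T$, consistent with $\bA_0=\bI$.

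\textbf{Step 2: the layer identity.} For $1\le k\le D$, subtracting the indicator for $k-1$ from the indicator for $k$ gives
\[
    \bpi^\T\bA_k=\bpi^\T\bC^k-\bpi^\T\bC^{k-1}=\bfe_{1,k}^\T.
\]
For $k=0$ the identity $\bpi^\T\bA_0=\bpi^\T=\bfe_{1,0}^\T$ is immediate.

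\textbf{Step 3: vanishing beyond depth $D$.} For $k>D$ we have $k-1\ge D$, so Corollary~\ref{lem:Rk_d} gives $\bC^k=\bC^{k-1}=\bpi\mone^\T$. Hence $\bA_k=\mathbf{0}$, as a full matrix and not merely after left multiplication by $\bpi^\T$.

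The only mildly delicate point is the parent-level property $h_{p(j)}=h_j-1$ in Step~1. It follows from the construction, since a node discovered at round $\ell$ receives $h_j=\ell+1$ from a sender with $h=\ell$; equivalently, it follows from Theorem~\ref{thm:bfs-tree-optimal}. Everything else is bookkeeping.
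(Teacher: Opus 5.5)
Your proof is correct and follows essentially the same route as the paper: you read $\bpi^\T\bC^k$ off the entrywise formula \eqref{eq:RC-power-entrywise} as the indicator of $\{j:h_j\le k\}$, take consecutive differences, and invoke Corollary~\ref{lem:Rk_d} for $k>D$. The one addition is that you spell out the ancestor--depth equivalence $p^k(j)=1 \iff h_j\le k$ and the trivial case $k=0$. The paper leaves both implicit, stating the equivalence only in the proof of Lemma~\ref{lem:Rk_norm}.
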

\begin{proof}
    See Appendix \ref{pf:lem:Ak}.
\end{proof}

Equation \eqref{eq:matrix-update} can be written in the block form
\begin{equation}
    \label{eq:matrixpp}
    \brk{ \begin{array}{c}
         \bX^{(t+1)}\\
        \bY^{(t+1)}  
    \end{array}} = 
    \brk{\begin{array}{cc}
         \bR & -\gamma \bS\\
        \mathbf{0} & \bC  
    \end{array}} \brk{\begin{array}{c}
         \bX^{(t)}\\
        \bY^{(t)}  
    \end{array}} + 
    \brk{\begin{array}{c}
         \mathbf{0}\\
        \bG^{(t+1)} -\bG^{(t)} 
    \end{array}}.
\end{equation}
For $t\ge1$, unrolling \eqref{eq:matrixpp} from time $0$ to time $t$ gives
\[
\begin{aligned}
    \brk{ \begin{array}{c}
         \bX^{(t)}\\
        \bY^{(t)}  
    \end{array}}
    & = \brk{ \begin{array}{cc}
         \bR & -\gamma \bS\\
        \mathbf{0} & \bC  
    \end{array}}^t\brk{ \begin{array}{c}
         \bX^{(0)}\\
        \bY^{(0)}  
    \end{array}} \\
    & + 
    \sum_{m=0}^{t-1}
    \brk{ \begin{array}{cc}
         \bR & -\gamma \bS\\
        \mathbf{0} & \bC  
    \end{array}}^{t-m-1}\brk{\begin{array}{c}
         \mathbf{0}\\
        \bG^{(m+1)} -\bG^{(m)} 
    \end{array}}.
\end{aligned}
\]
For every integer $r>0$,
\[
\brk{ \begin{array}{cc}
         \bR & -\gamma \bS\\
        \mathbf{0} & \bC  
    \end{array}}^r = \brk{ \begin{array}{cc}
         \bR^r & -\gamma \sum_{j=1}^{r}\bR^{j-1}\bS\bC^{r-j}\\
        \mathbf{0} & \bC^r  
    \end{array}}.
\]
Using the initialization $\bY^{(0)}=\bG^{(0)}$, we obtain
\begin{align}
    \bX^{(t)}
        &=
        \bR^t\bX^{(0)}
        -\gamma \sum_{j=1}^{t}\bR^{j-1}\bS\bC^{t-j} \bG^{(0)} \nonumber\\
        &\quad
        - \gamma \sum_{m=0}^{t-2} \sum_{j=1}^{t-m-1}
        \bR^{j-1}\bS\bC^{t-m-1-j}
        \brk{\bG^{(m+1)} - \bG^{(m)}} ,\label{eq:pp1} \\
    \bY^{(t)}  &= \sum_{m=0}^{t-1} \bC^{t-m-1} \brk{\bG^{(m+1)} - \bG^{(m)}} + \bC^t \bG^{(0)}. \label{eq:pp2}
\end{align}
Collecting the coefficient of each $\bG^{(m)}$ in \eqref{eq:pp1} and
\eqref{eq:pp2} yields the following layer-increment representation, in which
each stochastic-gradient matrix appears only once:
\begin{align}
    \bX^{(t)} & = \bR^t\bX^{(0)} - \gamma\sum_{m=0}^{t-1}\sum_{j=1}^{t-m} \bR^{j-1}\bS\bA_{t-m-j}\bG^{(m)} ,\label{eq:pp1-layer}\\
    \bY^{(t)} & = \sum_{m=0}^{\min\{t,D\}}\bA_m\bG^{(t-m)} .\label{eq:pp2-layer}
\end{align}
Consequently, the root trajectory satisfies a particularly simple recursion.
Since
$\bpi^\T\bR=\bpi^\T$ and $\bpi^\T\bS=\bpi^\T$,
\begin{equation}
    \label{eq:root-diff}
    \Delta\bar{\bX}^{(t)}
    =
    -\gamma\mone\bpi^\T\bY^{(t)}
    =
    -\gamma\mone
    \sum_{m=0}^{\min\crk{t,D}}\bpi^\T\bA_m\bG^{(t-m)} .
\end{equation}
If the initial points are identical, then $\bPi_{\bR}\bR^t\bX^{(0)}=\mathbf{0}$.
Unrolling the $\bX$-recursion directly also gives
\begin{equation}
    \label{eq:hatX-Y}
    \hat{\bX}^{(t)} = - \gamma\sum_{m=0}^{t-1}\sum_{j=1}^{t-m} \bPi_{\bR}\bR^{j-1}\bS\bA_{t-m-j}\bG^{(m)}.
\end{equation}

Equation \eqref{eq:hatX-Y} couples an upward layer increment with the residual
disagreement left after subsequent downward propagation. More precisely,
$\bA_{q-j}$ selects the information that arrives at the root after $q-j$ push
steps, while $\bPi_{\bR}\bR^{j-1}$ measures the disagreement remaining after
$j-1$ pull steps. The following bound controls this combined routing kernel at
a fixed total propagation depth $q$.
\begin{lemma}[Pull--push interaction bound]
    \label{lem:RC-interaction}
    For every integer $q\ge 1$,
    \[
        \norm{
        \sum_{j=\max\crk{1,q-D}}^{\min\crk{q,D}}
        \bPi_{\bR}\bR^{j-1}\bS\bA_{q-j}
        }_F^2
        \le
        n^2 ,
    \]
    where an empty sum is understood as zero.
\end{lemma}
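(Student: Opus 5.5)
The plan is to compute the matrix in the statement entry by entry and show that every entry lies in $\{0,-1\}$. The Frobenius bound $n^2$ then follows at once, because the matrix has only $n^2$ entries.

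\textbf{Rank-one factorization.} Since $\bS=\bpi\bpi^\T$, each summand factors as
\[
    \bPi_{\bR}\bR^{j-1}\bS\bA_{q-j}
    =\bigl(\bPi_{\bR}\bR^{j-1}\bpi\bigr)\bigl(\bpi^\T\bA_{q-j}\bigr).
\]
For $j$ in the summation range we have $0\le q-j\le D$. Lemma~\ref{lem:Ak} therefore gives $\bpi^\T\bA_{q-j}=\bfe_{1,q-j}^\T$, the indicator of the layer $\cI_{1,q-j}$.

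For the left factor, I will use the entrywise formula \eqref{eq:RC-power-entrywise}. By it, $\bR^{j-1}\bpi$ is the first column of $\bR^{j-1}$, which is the indicator $u_{\le j-1}$ of $\{i:h_i\le j-1\}$. Its root entry is $1$, so $\bpi^\T\bR^{j-1}\bpi=1$. Hence
\[
    \bPi_{\bR}\bR^{j-1}\bpi=u_{\le j-1}-\mone=-u_{\ge j},
\]
where $u_{\ge j}$ denotes the indicator of $\{i:h_i\ge j\}$.

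\textbf{Computing the sum.} Combining the two factors, the matrix in question equals
\[
    -\sum_{j=\max\{1,q-D\}}^{\min\{q,D\}}u_{\ge j}\,\bfe_{1,q-j}^\T .
\]
The column supports of these rank-one terms are the layers $\cI_{1,q-j}$. They are pairwise disjoint, since distinct $j$ give distinct layers.

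So for a fixed column index $l$, at most one term contributes: the one with $j=q-h_l$, and only if this $j$ lies in the summation range. In that case the entry $(i,l)$ equals $-\mathbbm{1}_{\{h_i\ge q-h_l\}}$; otherwise the whole column is zero. Every entry is therefore $0$ or $-1$, and
\[
    \norm{\cdot}_F^2\le n\cdot n=n^2 .
\]

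\textbf{Main obstacle.} The only delicate point is the index bookkeeping. I need to confirm that every $j$ in the range satisfies $0\le q-j\le D$, so that Lemma~\ref{lem:Ak} applies. I also need the convention that $\bR^0=\bI$ at $j=1$, which gives $u_{\ge1}=\mone-\bpi$ and is consistent with the formula above. With both checked, the argument covers every integer $q\ge1$, including the empty-sum case, where the matrix is zero.
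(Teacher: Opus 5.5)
Your proposal is correct and follows essentially the same argument as the paper. Both factor through $\bS=\bpi\bpi^\T$ and use Lemma~\ref{lem:Ak} to show that each column receives at most one nonzero term, namely the one with $q-j=h_v$. Both then use $\bPi_{\bR}\bR^{j-1}\bpi=\bR^{j-1}\bpi-\mone$ to conclude that every entry lies in $\{-1,0\}$. Your rank-one, disjoint-column-support phrasing is just a repackaging of the paper's column-by-column computation.
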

\begin{proof}
    See Appendix \ref{pf:lem:RC-interaction}.
\end{proof}

The two factors in this kernel have finite support: $\bA_k=\mathbf{0}$ for
$k>D$, while Corollary \ref{lem:Rk_d} gives
$\bPi_{\bR}\bR^j=\mathbf{0}$ for $j\ge D$. Consequently, information can
affect the disagreement recursion through at most $2D$ consecutive propagation
steps, in contrast to the infinite geometric memory generated by asymptotic
gossip mixing.

\subsection{Stochastic Error and Descent Estimates}

With the finite-memory representation in place, we now control the stochastic
and deterministic components that enter the root and disagreement recursions.
For this purpose, define the pre-sampling filtration by
\[
    \cF_0:=\{\emptyset,\Omega\},
    \qquad
    \cF_t
    :=
    \sigma\bigl(
        \xi_i^{(k)}:
        i\in[n],\ 0\le k\le t-1
    \bigr),
    \quad t\ge1.
\]
The iterate $\bX^{(t)}$ is $\cF_t$-measurable because it is formed before the
time-$t$ samples $\{\xi_i^{(t)}\}_{i\in[n]}$ are drawn.
Assumption~\ref{a.var} therefore
implies that, conditionally on $\cF_t$, the rows of $\bTh^{(t)}$ are
independent and satisfy
\begin{equation}
    \label{eq:noise-conditional-properties}
    \expect\brk{\bTh^{(t)}\mid\cF_t}=\mathbf{0},
    \qquad
    \expect\brk{
        \norm{\brk{\bTh^{(t)}}_{i:}}^2\mid\cF_t
    }
    \le\sigma^2,
    \quad i\in\cN.
\end{equation}
We begin with three consequences of these properties, which will be used to
estimate stochastic-gradient noise layer by layer.
\begin{lemma}
    \label{lem:var}
    Suppose Assumption \ref{a.var} holds. Then, for any index set
    $\cS\subseteq[n]$ with indicator vector $\bfe_{\cS}$,
    \[
        \expect\brk{\norm{\bfe_{\cS}^\T\bTh^{(t)}}_2^2}
        \le |\cS|\sigma^2 .
    \]
\end{lemma}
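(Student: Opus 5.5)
The plan is to expand the squared norm and use the conditional properties \eqref{eq:noise-conditional-properties} to remove all cross terms between distinct agents. Write
\[
    \bfe_{\cS}^\T\bTh^{(t)}=\sum_{i\in\cS}\brk{\bTh^{(t)}}_{i:},
\]
so that
\[
    \norm{\bfe_{\cS}^\T\bTh^{(t)}}^2
    =\sum_{i\in\cS}\norm{\brk{\bTh^{(t)}}_{i:}}^2
    +\sum_{\substack{i,j\in\cS\\ i\ne j}}\ip{\brk{\bTh^{(t)}}_{i:}}{\brk{\bTh^{(t)}}_{j:}}.
\]
I will take the conditional expectation given $\cF_t$ and then apply the tower property.

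For the diagonal terms, \eqref{eq:noise-conditional-properties} gives directly
\[
    \expect\brk{\norm{\brk{\bTh^{(t)}}_{i:}}^2\mid\cF_t}\le\sigma^2,
\]
so these terms contribute at most $|\cS|\sigma^2$.

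For the cross terms with $i\ne j$, the argument rests on the independence of the rows. Since $\bX^{(t)}$ is $\cF_t$-measurable and the time-$t$ samples $\xi_i^{(t)}$ are drawn independently of $\cF_t$ and of each other (Assumption~\ref{a.var}), the rows $\brk{\bTh^{(t)}}_{i:}=g_i(x_i^{(t)},\xi_i^{(t)})-\nabla f_i(x_i^{(t)})$ are conditionally independent and have conditional mean zero. Hence the conditional expectation of each inner product factorizes into the inner product of the two conditional means, which is zero.

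The only point needing care is justifying this factorization rigorously, since $x_i^{(t)}$ is random. I would handle it by a freezing argument. Condition on $\cF_t$, regard $x_i^{(t)}$ as fixed, and use the independence of $(\xi_i^{(t)},\xi_j^{(t)})$ from $\cF_t$. This reduces the claim to the unconditional statement for deterministic points, with the integrability of the product coming from Cauchy--Schwarz and the finite second moments.

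Summing the two contributions and taking the total expectation then gives $\expect\norm{\bfe_{\cS}^\T\bTh^{(t)}}^2\le|\cS|\sigma^2$.
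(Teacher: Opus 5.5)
Your proposal is correct and follows essentially the same route as the paper: expand the squared norm, bound the diagonal terms by $\sigma^2$ using the conditional second-moment bound, eliminate the cross terms through conditional independence and centering of the rows given $\cF_t$, and then take total expectation. Your freezing argument only makes explicit the factorization of the cross terms that the paper takes directly from \eqref{eq:noise-conditional-properties}.
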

\begin{proof}
    By \eqref{eq:noise-conditional-properties}, conditionally on $\cF_t$ the
    rows of $\bTh^{(t)}$ are independent and centered, and each has conditional
    second moment at most $\sigma^2$. Therefore, the cross terms vanish and
    \[
        \expect\brk{\norm{\bfe_{\cS}^\T\bTh^{(t)}}_2^2\mid\cF_t}
        =\sum_{i\in\cS}\expect\brk{
        \norm{\brk{\bTh^{(t)}}_{i:}}_2^2\mid\cF_t}
        \le |\cS|\sigma^2.
    \]
    Taking total expectation proves the result.
\end{proof}

\begin{corollary}
    \label{lem:var0}
    Suppose Assumption \ref{a.var} holds. Then, for every $0\le k\le D-1$,
    \[
        \expect\brk{\norm{\prt{\bC^{k}-\bpi\mone^\T}\bTh^{(t)}}_F^2}
        \le 2\prt{n-n_k}\sigma^2 .
    \]
\end{corollary}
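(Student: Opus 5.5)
The plan is to expand the Frobenius norm row by row and apply Lemma~\ref{lem:var} to each row separately. By \eqref{eq:RC-power-entrywise} the matrix $\bC^k$ is binary, with
\[
    \brk{\bC^k}_{ij}=\mathbbm{1}_{\{p^k(j)=i\}},
\]
so its $i$-th row is the indicator vector $\bfe_{\cS_i}^\T$ of the set
\[
    \cS_i:=\{j\in\cN:\ p^k(j)=i\}.
\]
The sets $\{\cS_i\}_{i\in\cN}$ partition $\cN$, because every node has exactly one $k$-th ancestor. Subtracting $\bpi\mone^\T$ changes only the first row. Hence the $i$-th row of $\bC^k-\bpi\mone^\T$ equals $\bfe_{\cS_i}^\T$ for $i\ne1$. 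Its first row is $\bfe_{\cS_1}^\T-\mone^\T=-\bfe_{\cN\setminus\cS_1}^\T$.

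Next I identify $\cS_1$. Since $p(1)=1$, we have $p^k(j)=1$ exactly when $h_j\le k$. So $|\cS_1|=n_k$ and $|\cN\setminus\cS_1|=n-n_k$.

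Now write
\[
    \norm{(\bC^k-\bpi\mone^\T)\bTh^{(t)}}_F^2
    =\norm{\bfe_{\cN\setminus\cS_1}^\T\bTh^{(t)}}^2
    +\sum_{i\ne1}\norm{\bfe_{\cS_i}^\T\bTh^{(t)}}^2.
\]
Take expectations and apply Lemma~\ref{lem:var} to each term. The first term contributes at most $(n-n_k)\sigma^2$. The remaining terms contribute at most $\sum_{i\ne1}|\cS_i|\sigma^2$. Because the $\cS_i$ partition $\cN$, this sum equals $(n-n_k)\sigma^2$. Adding the two contributions gives $2(n-n_k)\sigma^2$.

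The only step needing care is the combinatorial bookkeeping. One must check that the root row of $\bC^k$ collects exactly the nodes with $h_j\le k$, since $1\in\mathrm{Ch}(1)$ and information remains at the root after arrival. One must also check that the non-root rows have supports partitioning the complementary set of nodes with $h_j>k$. Both facts follow directly from the entrywise formula \eqref{eq:RC-power-entrywise}.

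The restriction $k\le D-1$ is not needed for the argument. For $k\ge D$ the bound holds trivially, since both sides are zero by Corollary~\ref{lem:Rk_d}.
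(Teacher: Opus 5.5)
Your proof is correct and is essentially the paper's argument: independence and centering of the rows of $\bTh^{(t)}$ remove all cross terms, leaving $\sigma^2$ times the number of $\pm1$ entries of $\bC^k-\bpi\mone^\T$, which is $2(n-n_k)$. The paper states this as the general bound $\expect\brk{\norm{\bB_k\bTh^{(t)}}_F^2\mid\cF_t}\le\sigma^2\norm{\bB_k}_F^2$ and reuses \eqref{eq:Rk-residual-frobenius}, which counts these entries along the rows of $\bR^k$; you count them along the rows of $\bC^k$ via Lemma~\ref{lem:var}, which is the same bookkeeping.
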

\begin{proof}
    Let $\bB_k:=\bC^k-\bpi\mone^\T$. Conditional independence and centering in
    \eqref{eq:noise-conditional-properties} imply
    \[
        \expect\brk{\norm{\bB_k\bTh^{(t)}}_F^2\mid \cF_t}
        \le \sigma^2\norm{\bB_k}_F^2 .
    \]
    Equation~\eqref{eq:Rk-residual-frobenius}, established in the proof of
    Lemma~\ref{lem:Rk_norm}, and the identity
    $\bC^k-\bpi\mone^\T=(\bR^k-\mone\bpi^\T)^\T$ give
    $\norm{\bB_k}_F^2=2(n-n_k)$. Taking total expectation proves the claim.
\end{proof}
\begin{corollary}
    \label{lem:var1}
    Suppose Assumption \ref{a.var} holds. Then
    \[
        \sum_{k=1}^{D}
        \expect \norm{\prt{\bpi-\mone}^\T\bA_k\bTh^{(t)}}_F^2
        \le \prt{n-1}\sigma^2 .
    \]
\end{corollary}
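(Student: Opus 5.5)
We need to bound $\sum_{k=1}^D \expect\norm{(\bpi-\mone)^\T\bA_k\bTh^{(t)}}_F^2$. The plan is to compute the row vector $(\bpi-\mone)^\T\bA_k$ explicitly, show it is a signed indicator of two disjoint node sets, and then apply Lemma~\ref{lem:var} layer by layer.

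\textbf{Step 1: identify the row vector.} Lemma~\ref{lem:Ak} already gives $\bpi^\T\bA_k=\bfe_{1,k}^\T$. For the other piece, column stochasticity of $\bC$ gives $\mone^\T\bC=\mone^\T$, hence $\mone^\T\bC^k=\mone^\T$ for all $k$. Therefore, for $k\ge1$,
\[
\mone^\T\bA_k=\mone^\T\bC^k-\mone^\T\bC^{k-1}=\mathbf{0}^\T,
\]
and consequently $(\bpi-\mone)^\T\bA_k=\bfe_{1,k}^\T$. Substituting,
\[
\sum_{k=1}^{D}\expect\norm{(\bpi-\mone)^\T\bA_k\bTh^{(t)}}_F^2=\sum_{k=1}^{D}\expect\norm{\bfe_{1,k}^\T\bTh^{(t)}}^2 .
\]

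\textbf{Step 2: apply Lemma~\ref{lem:var}.} Taking $\cS=\cI_{1,k}$ in Lemma~\ref{lem:var} bounds each summand by $|\cI_{1,k}|\sigma^2$.

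\textbf{Step 3: sum over layers.} The layers $\cI_{1,k}$ for $k=1,\dots,D$ partition $\cN\setminus\{1\}$, since every node has a finite depth $h_i\le D$ and only the root has depth $0$. Summing the bounds from Step 2 therefore gives $(n-1)\sigma^2$, which is the claim.

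The only step needing care is Step 1, namely that $\mone^\T\bA_k$ vanishes for $k\ge1$. This holds because every column of $\bC$ contains exactly one unit entry (Section~\ref{sec:ost-matrix}). If that identity were not available, the fallback would be to compute $\mone^\T\bA_k$ directly from the entrywise formula \eqref{eq:RC-power-entrywise}, but no such difficulty is expected.
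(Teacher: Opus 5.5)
Your proof is correct and follows the paper's argument essentially verbatim: $\mone^\T\bA_k=\mathbf{0}$ for $k\ge1$ by column stochasticity of $\bC$, Lemma~\ref{lem:Ak} then gives $(\bpi-\mone)^\T\bA_k=\bfe_{1,k}^\T$, and Lemma~\ref{lem:var} applied to the disjoint layers $\cI_{1,1},\ldots,\cI_{1,D}$, which partition $\cN\setminus\{1\}$, yields $(n-1)\sigma^2$. One small wording slip: your opening calls the row vector a ``signed indicator of two disjoint node sets,'' but it is in fact the plain indicator $\bfe_{1,k}^\T$; this does not affect the argument.
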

\begin{proof}
    Since $\mone^\T\bA_k=\mathbf{0}$ for $k\ge 1$ and
    $\bpi^\T\bA_k=\bfe_{1,k}^\T$ by Lemma \ref{lem:Ak},
    \[
        \prt{\bpi-\mone}^\T\bA_k=\bfe_{1,k}^\T .
    \]
    Applying Lemma \ref{lem:var} to the pairwise disjoint layers
    $\cI_{1,k}$ and summing over $k=1,\ldots,D$ gives
    $\sum_{k=1}^{D}|\cI_{1,k}|\sigma^2=(n-1)\sigma^2$.
\end{proof}

The preceding results control stochastic-gradient noise across the tree layers.
We next bound the deterministic gradient mismatch caused by evaluating local
gradients away from the root iterate. Since the layer increments partition the
agents according to their rooted depths, the resulting delayed terms can be
charged directly to the cumulative root-relative disagreement.
\begin{lemma}
    \label{lem:Ak-barF}
    Suppose Assumption \ref{a.smooth} holds. Then, for every integer $T\ge1$,
    \[
    \begin{aligned}
        \sum_{t=0}^{T-1}
        \expect\norm{
        \sum_{m=0}^{\min\crk{t,D}}\bpi^\T\bA_m\nabla \bar{\bF}^{(t-m)}
        }^2
        \le nL^2\sum_{t=0}^{T-1}\expect\norm{\hat{\bX}^{(t)}}_F^2 .
    \end{aligned}
    \]
\end{lemma}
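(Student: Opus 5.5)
By Lemma~\ref{lem:Ak}, $\bpi^\T\bA_m=\bfe_{1,m}^\T$, so the inner sum rewrites as a layer-wise sum of local gradient mismatches:
\[
    \sum_{m=0}^{\min\{t,D\}}\bpi^\T\bA_m\nabla\bar{\bF}^{(t-m)}
    =
    \sum_{m=0}^{\min\{t,D\}}\sum_{i\in\cI_{1,m}}
    \Bigl(\nabla f_i\bigl(x_i^{(t-m)}\bigr)-\nabla f_i\bigl(x_1^{(t-m)}\bigr)\Bigr)^\T .
\]
The plan is to write this as a sum over agents $i$ with $h_i\le t$, where each agent contributes exactly one term, evaluated at time $s=t-h_i$. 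The number of terms is at most $n$, so Cauchy--Schwarz followed by Assumption~\ref{a.smooth} gives
\[
    \norm{\cdot}^2
    \le
    n\sum_{i:\,h_i\le t}
    L^2\norm{x_i^{(t-h_i)}-x_1^{(t-h_i)}}^2 .
\]

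Next I sum over $t=0,\ldots,T-1$ and exchange the order of summation. For a fixed agent $i$, the map $t\mapsto s=t-h_i$ is injective, and its range lies in $\{0,\ldots,T-1-h_i\}\subseteq\{0,\ldots,T-1\}$. Hence
\[
    \sum_{t=0}^{T-1}\ \sum_{i:\,h_i\le t}\norm{x_i^{(t-h_i)}-x_1^{(t-h_i)}}^2
    \le
    \sum_{s=0}^{T-1}\sum_{i\in\cN}\norm{x_i^{(s)}-x_1^{(s)}}^2 .
\]
The inner sum is exactly $\norm{\hat{\bX}^{(s)}}_F^2$, because the $i$-th row of $\bPi_{\bR}\bX^{(s)}=\bX^{(s)}-\mone\bpi^\T\bX^{(s)}$ is $x_i^{(s)}-x_1^{(s)}$. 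Taking expectations then yields the claimed bound with constant $nL^2$.

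The only delicate step is the reindexing: each $\hat{\bX}^{(s)}$ row must be charged at most once. This holds because each agent belongs to exactly one layer, so its contribution appears only with delay $h_i$. Everything else is Cauchy--Schwarz and smoothness, so no genuine obstacle is expected. The root's own term ($m=0$) is zero, so including it in the count is harmless.
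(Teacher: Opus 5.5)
Your proposal is correct and matches the paper's proof essentially step for step. Lemma~\ref{lem:Ak} turns the sum into one delayed mismatch per agent with $h_i\le t$; Cauchy--Schwarz over at most $n$ terms (Lemma~\ref{lem:sum_matrix}) and $L$-smoothness give the pointwise bound; and the per-agent reindexing $s=t-h_i$ charges each row of $\hat{\bX}^{(s)}$ at most once.
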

\begin{proof}
    The proof is given in Appendix \ref{pf:lem:Ak-barF}.
\end{proof}

Since $\bar{\bX}^{(t)}=\mone(x_1^{(t)})^\T$, its increment satisfies
\[
    \norm{\Delta\bar{\bX}^{(t)}}_F^2
    =n\norm{x_1^{(t+1)}-x_1^{(t)}}^2.
\]
Thus, summing this quantity over $t=0,\ldots,T-1$ measures the accumulated
squared update energy of the root trajectory. The next lemma controls this
energy by separating the effects of stochastic-gradient noise, root-relative
disagreement, the finite-memory initialization effect, and the global
gradient along the root trajectory.
\begin{lemma}
    \label{lem:X_diff}
    Suppose Assumptions \ref{a.smooth} and \ref{a.var} hold. If
    $\gamma \le 1/(6 n\sqrt{D D_{\avg}} L)$, then, for every integer $T\ge1$,
    \[
    \begin{aligned}
        \sum_{t=0}^{T-1} \expect\norm{\Delta\bar{\bX}^{(t)}}_F^2
        &\le
        4\gamma^2 n^2\sigma^2 T
        +20\gamma^2 n^2L^2
        \sum_{t=0}^{T-1}\expect\norm{\hat{\bX}^{(t)}}_F^2\\
        &\quad
        +40\gamma^2 n^2 D_{\avg}\norm{\nabla\bF^{(0)}}_F^2
        +20\gamma^2 n^3
        \sum_{t=0}^{T-1}\expect\norm{\nabla f(x_1^{(t)})}^2 .
    \end{aligned}
    \]
\end{lemma}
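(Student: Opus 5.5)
We start from the root-increment identity \eqref{eq:root-diff}, which gives $\norm{\Delta\bar{\bX}^{(t)}}_F^2=n\gamma^2\norm{\sum_{m=0}^{\min\{t,D\}}\bpi^\T\bA_m\bG^{(t-m)}}^2$. We split each stochastic-gradient stack as $\bG^{(s)}=\bTh^{(s)}+\nabla\bar{\bF}^{(s)}+\nabla\bF(\bar{\bX}^{(s)})$, which produces three terms: noise, disagreement-induced mismatch, and gradients evaluated at the delayed root iterate. The third term is then written as $\nabla\bF(\bar{\bX}^{(t)})$ plus a correction $\nabla\bF(\bar{\bX}^{(t-m)})-\nabla\bF(\bar{\bX}^{(t)})$. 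For $t<D$ there is an additional truncation effect, since only layers with $h_i\le t$ appear and the missing layers carry $\nabla\bF^{(0)}$-type contributions. We will therefore treat the initialization separately. Concretely, for $t<m$ the slot "would have been" $\bG^{(0)}$ evaluated at $x^{(0)}$, so we add and subtract $\sum_{m>t}\bfe_{1,m}^\T\nabla\bF(\bX^{(0)})$ to complete $\sum_m\bfe_{1,m}^\T=\mone^\T$. Using $\bpi^\T\bA_m=\bfe_{1,m}^\T$ (Lemma~\ref{lem:Ak}), the complete-gradient term becomes exactly $\mone^\T\nabla\bF(\bar{\bX}^{(t)})=n\nabla f(x_1^{(t)})^\T$.

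We apply $\norm{a_1+\cdots+a_5}^2\le5\sum\norm{a_i}^2$, or a weighted version, and bound each piece, summing over $t$.

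\textbf{Noise.} The layers $\bfe_{1,m}$ are disjoint and the time indices $t-m$ are distinct. We expand the square and use conditional centering from \eqref{eq:noise-conditional-properties}. Cross terms between different times vanish by the tower property, because the row $\bfe_{1,m}^\T\bTh^{(t-m)}$ is independent of earlier samples; alternatively we apply Lemma~\ref{lem:var} termwise with the factor $2$ absorbing the rest. This yields at most $n\sigma^2$ per $t$, giving the $4\gamma^2n^2\sigma^2T$ term.

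\textbf{Mismatch.} This is exactly Lemma~\ref{lem:Ak-barF}, giving $nL^2\sum\expect\norm{\hat{\bX}^{(t)}}_F^2$, which contributes the $20\gamma^2n^2L^2$ term.

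\textbf{Gradient term.} This gives $5n\gamma^2\cdot n^2\norm{\nabla f(x_1^{(t)})}^2$, i.e.\ the last term with constant $5$ (we reserve slack to reach $20$).

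\textbf{Initialization.} The truncation term at time $t$ is $\sum_{m>t}\bfe_{1,m}^\T\nabla\bF^{(0)}$. Its square is at most $(n-n_t)\norm{\nabla\bF^{(0)}}_F^2$ by Cauchy--Schwarz. Summing over $t<D$ and using the tail-sum identity $\sum_{t<D}(n-n_t)=nD_{\avg}$ yields $nD_{\avg}\norm{\nabla\bF^{(0)}}_F^2$, hence $n^2D_{\avg}$ after the prefactor $n\gamma^2$. A careful accounting will be needed for the mix of $\bX^{(0)}$ versus $\bar{\bX}^{(0)}$, which coincide here.

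\textbf{Delay correction (the main obstacle).} We bound $\norm{\sum_m\bfe_{1,m}^\T[\nabla\bF(\bar{\bX}^{(t-m)})-\nabla\bF(\bar{\bX}^{(t)})]}^2$ by Cauchy--Schwarz over agents:
\[
\le n\sum_m|\cI_{1,m}|L^2\norm{x_1^{(t-m)}-x_1^{(t)}}^2\le nL^2\sum_m|\cI_{1,m}|\,m\sum_{s=t-m}^{t-1}\norm{x_1^{(s+1)}-x_1^{(s)}}^2 .
\]
Summing over $t$, each root increment is counted at most $m$ times per layer. This gives $nL^2\sum_m|\cI_{1,m}|m^2\cdot\frac1n\sum_s\norm{\Delta\bar{\bX}^{(s)}}_F^2\le nL^2\,D\cdot nD_{\avg}\cdot\frac1n\sum_s\norm{\Delta\bar{\bX}^{(s)}}_F^2$, using $\sum_m|\cI_{1,m}|m^2\le D\sum_i h_i=nDD_{\avg}$. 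With the prefactor $5n\gamma^2$, the coefficient of $\sum\norm{\Delta\bar{\bX}}_F^2$ on the right is $5\gamma^2n^2DD_{\avg}L^2\le 5/36<1/2$ under the stepsize condition $\gamma\le1/(6n\sqrt{DD_{\avg}}L)$. We then move this term to the left and multiply by at most $2$, which doubles the constants and accounts for the $4,20,40,20$ pattern once the split weights are chosen appropriately. The delicate points are the weighting of the five-way split so that the noise constant stays at $4$, and the handling of time indices $t-m<0$ in the correction term; the latter we absorb into the initialization term by treating those slots via $x^{(0)}$.
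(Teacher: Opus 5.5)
Your proposal is correct and is essentially the paper's proof. The paper gets the same five pieces:
\begin{itemize}
\item the noise;
\item the mismatch, handled by Lemma~\ref{lem:Ak-barF};
\item the delay correction;
\item the boundary term $\bpi^\T\bPi_{\bC}\bC^t\nabla\bF^{(0)}$;
\item $\mone^\T\nabla\bF(\bar{\bX}^{(t)})$.
\end{itemize}
It obtains them by a summation by parts that is exactly your per-agent telescoping of $x_1^{(t-h_i)}-x_1^{(t)}$, with $x^{(0)}$ in the slots $t-m<0$. It bounds the delay piece with Lemmas~\ref{lem:sum_matrix}, \ref{lem:Rk_norm} and~\ref{lem:sum_help} instead of your $\sum_i h_i^2\le nDD_{\avg}$, then absorbs it under the same stepsize condition.

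The only loose end is the bookkeeping you flagged. A uniform five-way split leaves the noise coefficient at $5\gamma^2n^2\sigma^2T$ even before absorption, so you cannot reach $4$. Instead use $\norm{a+b}^2\le2\norm{a}^2+2\norm{b}^2$, with $a$ the noise piece and $b$ the sum of the other four, and then $\norm{b}^2\le4\sum$ over those four. This gives:
\begin{itemize}
\item noise coefficient $2$, hence $4$ after the final doubling;
\item coefficient $8$, hence $16$, for the mismatch, initialization and gradient terms;
\item delay coefficient $8\gamma^2n^2DD_{\avg}L^2\le 2/9<1/2$.
\end{itemize}

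For the noise bound itself, rely on the martingale-difference cancellation. Earlier-time noise rows are $\cF_{t-m}$-measurable and $\expect[\bTh^{(t-m)}\mid\cF_{t-m}]=\mathbf{0}$. This is the correct reason, rather than independence from earlier samples, which fails because $\bX^{(t-m)}$ depends on them. Drop the ``termwise with a factor $2$'' alternative, since bounding termwise without cross-term cancellation costs a factor $D+1$, not $2$.
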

\begin{proof}
    The proof is given in Appendix \ref{pf:lem:X_diff}.
\end{proof}

To complement the preceding root-increment estimate, we next quantify how far
the local iterates deviate from the root trajectory. By the definition of
$\hat{\bX}^{(t)}$,
\[
    \norm{\hat{\bX}^{(t)}}_F^2
    =
    \sum_{i=1}^{n}\norm{x_i^{(t)}-x_1^{(t)}}^2.
\]
Hence, summing this quantity over $t=0,\ldots,T-1$ measures the
cumulative node-wise disagreement relative to the root, which is the
root-relative analogue of the standard consensus error. The following lemma
bounds this disagreement in terms of stochastic-gradient noise, the global
gradient along the root trajectory, and the initialization effect.
\begin{lemma}
    \label{lem:PiX}
    Suppose Assumptions \ref{a.smooth} and \ref{a.var} hold. If
    $\gamma \le 1/(20 n D L)$, then, for every integer $T\ge1$,
    \[
    \begin{aligned}
        \sum_{t=0}^{T-1} \expect\norm{\hat{\bX}^{(t)}}_F^2
        &\le 32 \gamma^2 n^2 D \sigma^2 T
        + 40 \gamma^2 n^3 D D_{\avg}
        \sum_{t=0}^{T-1} \expect \norm{\nabla f(x_1^{(t)})}^2 \\
        &\quad + 120 \gamma^2 n^2 D^2 D_{\avg}
        \norm{\nabla\bF^{(0)}}_F^2 .
    \end{aligned}
    \]
\end{lemma}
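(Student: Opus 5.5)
The cleanest route is to bypass the full kernel representation \eqref{eq:hatX-Y} and work with the exact delay identity \eqref{eq:tree-rgt-decision-delay}. Since $x_i^{(t)}=x_1^{(t-h_i)}$ for $t\ge h_i$ (and $x^{(0)}=x_1^{(0)}$ otherwise), and $x_1^{(s+1)}-x_1^{(s)}=-\gamma y_1^{(s)}$, every row of $\hat{\bX}^{(t)}$ is a short telescoping sum:
\[
    x_i^{(t)}-x_1^{(t)}=\gamma\sum_{s=\max\{t-h_i,0\}}^{t-1}y_1^{(s)} .
\]
This is equivalent to \eqref{eq:hatX-Y}, because $\bPi_{\bR}\bR^{j-1}\bpi=-\mone_{\{h_i\ge j\}}$ and $\bS\bA_k=\bpi\bfe_{1,k}^\T$ by Lemma~\ref{lem:Ak}. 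Using \eqref{eq:root-diff}, I will write $y_1^{(s)}=\sum_{k=0}^{\min\{s,D\}}\bfe_{1,k}^\T\bG^{(s-k)}$. I will then split $\bG=\bTh+\nabla\bar{\bF}+\nabla\bF(\bar{\bX})$ and treat the three pieces separately, with a factor $4$ from Young's inequality; this factor, after a further split of the third piece, produces the constants $32,40,120$.

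\textbf{Noise piece.} For fixed $i$ and $t$, the noise part of the window sum is $\sum_{s}\sum_k \bfe_{1,k}^\T\bTh^{(s-k)}$. Each pair (agent $i'$, time $m$) appears at most once in this sum: agent $i'$ sits in one layer $k=h_{i'}$, so $m=s-h_{i'}$ determines $s$. The sum is therefore a combination of martingale differences with coefficients in $\{0,1\}$. By \eqref{eq:noise-conditional-properties} and the argument of Lemma~\ref{lem:var}, the cross terms vanish and the second moment is at most $h_i n\sigma^2$. Summing over $i$ and $t$ gives $\gamma^2 n^2D_{\avg}\sigma^2T\le\gamma^2n^2D\sigma^2T$, which is well inside the target.

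\textbf{Deterministic pieces.} For the remaining deterministic part $d^{(s)}$ of $y_1^{(s)}$, I apply Cauchy--Schwarz on each window of length $h_i$, which gives $\sum_t\|\hat{\bX}^{(t)}\|_F^2\lesssim\gamma^2\sum_i h_i^2\sum_s\|d^{(s)}\|^2\le\gamma^2nDD_{\avg}\sum_s\|d^{(s)}\|^2$.
\begin{itemize}
\item The $\nabla\bar{\bF}$ contribution is controlled by Lemma~\ref{lem:Ak-barF}. It yields $\gamma^2n^2L^2DD_{\avg}\sum_t\|\hat{\bX}^{(t)}\|_F^2$, which is absorbed into the left-hand side because $\gamma\le1/(20nDL)$ makes the coefficient at most $1/400$.
\item The $\nabla\bF(\bar{\bX})$ contribution equals $\sum_k\sum_{i'\in\cI_{1,k}}\nabla f_{i'}(x_1^{(s-k)})$, with the convention $x_1^{(r)}=x^{(0)}$ for $r<0$ at early times. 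I write it as $n\nabla f(x_1^{(s)})$ plus a drift. The main part gives the $40\gamma^2n^3DD_{\avg}\sum\|\nabla f(x_1^{(t)})\|^2$ term.
\item The drift has two sources. The first is smoothness: $L\|x_1^{(s-k)}-x_1^{(s)}\|$ is bounded by sums of $k$ root increments. Since layer $k$ has its own delay, summing these over layers produces a weight $\sum_{i'}h_{i'}=nD_{\avg}$ multiplying $\sum_t\|\Delta\bar{\bX}^{(t)}\|_F^2/n$. The second source is the warm-up terms with $s<k$, which involve only $\nabla f_{i'}(x^{(0)})$ and contribute at most $D_{\avg}\|\nabla\bF^{(0)}\|_F^2$-type quantities over $\cO(D)$ early steps.
\end{itemize}

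I will then insert Lemma~\ref{lem:X_diff} for the accumulated root increments; its stepsize condition holds because $20nDL\ge6n\sqrt{DD_{\avg}}L$. Its terms re-enter with extra factors of order $\gamma^2n^2L^2D^2\le1/400$: the $\hat{\bX}$ term is absorbed again, and the noise, $\nabla f$ and initialization terms merge into the stated ones, producing $120\gamma^2n^2D^2D_{\avg}\|\nabla\bF^{(0)}\|_F^2$.

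The main obstacle is this last drift bookkeeping. I must charge each delayed gradient $\nabla f_{i'}(x_1^{(s-h_{i'})})$ to root increments with the correct weights, so that the final coefficients carry $D_{\avg}$ and not $D$. I must also check that, after absorption, the leftover coefficient is at most $1/2$, which costs the factor $2$ in the constants. If the constants do not close, my fallback is to bound the drift directly with the layer-wise recursion of Lemma~\ref{lem:RC-interaction} rather than through Lemma~\ref{lem:X_diff}.
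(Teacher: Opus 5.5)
Most of your argument is sound. The row-wise telescoping identity $x_i^{(t)}-x_1^{(t)}=\gamma\sum_{s}y_1^{(s)}$ holds, the martingale bound $\gamma^2h_in\sigma^2$ for the noise piece holds, and the window Cauchy--Schwarz and your treatment of the $\nabla\bar{\bF}$ and $n\nabla f(x_1^{(s)})$ pieces all work.

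\textbf{The gap: the drift.} Let $\delta^{(s)}:=\sum_{i'}\bigl[\nabla f_{i'}(x_1^{(s-h_{i'})})-\nabla f_{i'}(x_1^{(s)})\bigr]$. Telescoping $\delta^{(s)}$ into root increments cannot give a weight $D_{\avg}$ in front of $\sum_t\norm{\Delta\bar{\bX}^{(t)}}_F^2$.
\begin{itemize}
\item Take $f_{i'}=\frac{L}{2}\norm{x}^2$ and constant root increments $v$. Then $\delta^{(s)}=-LnD_{\avg}v$, so any bound of your form needs weight at least $L^2nD_{\avg}^2$. The natural Cauchy--Schwarz gives $L^2\sum_{i'}h_{i'}^2\le L^2nDD_{\avg}$.
\item Your window factor is $\gamma^2\sum_ih_i^2\le\gamma^2nDD_{\avg}$. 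The re-entry factor is therefore of order $\gamma^2n^2L^2D^2\cdot D_{\avg}^2$, not $\gamma^2n^2L^2D^2$. The stepsize condition only bounds it by $D_{\avg}^2/400$.
\item The $\hat{\bX}$, $\nabla f$, and initialization terms of Lemma~\ref{lem:X_diff} still close under this factor. Its noise term $4\gamma^2n^2\sigma^2T$ does not: it re-enters as a term of order $\gamma^2n^2D_{\avg}^2\sigma^2T$.
\item For a path rooted at an endpoint, $D_{\avg}\approx D/2$. The re-entered noise is then of order $\gamma^2n^2D^2\sigma^2T$, which exceeds $32\gamma^2n^2D\sigma^2T$ once $D$ is large.
\end{itemize}
So the route fails in the order of $D$, not in the constants. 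Cauchy--Schwarz over the delay window treats the noisy root increments as aligned. That discards exactly the martingale cancellation you used for the direct noise piece. Lemma~\ref{lem:X_diff} only controls sums of squared increments, so it cannot restore that cancellation. Your fallback via Lemma~\ref{lem:RC-interaction} does not address this either.

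\textbf{The missing step.} Apply the delay identity \eqref{eq:tree-rgt-decision-delay} to the drift itself, instead of telescoping it.
\begin{itemize}
\item With your convention $x_1^{(r)}=x^{(0)}$ for $r<0$, one has $x_1^{(s-h_{i'})}=x_{i'}^{(s)}$ for every agent. If $h_{i'}>s$, both sides equal $x^{(0)}$.
\item Hence $\delta^{(s)}=\mone^\T\prt{\nabla\bF^{(s)}-\nabla\bF(\bar{\bX}^{(s)})}$ and $\norm{\delta^{(s)}}^2\le nL^2\norm{\hat{\bX}^{(s)}}_F^2$.
\item Through your window bound this contributes at most $\gamma^2n^2L^2DD_{\avg}\sum_t\expect\norm{\hat{\bX}^{(t)}}_F^2\le\frac{1}{400}\sum_t\expect\norm{\hat{\bX}^{(t)}}_F^2$, times your Young constant. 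This is absorbed directly, and Lemma~\ref{lem:X_diff} is not needed at all.
\item The leftover correction $-\sum_{h_{i'}>s}\nabla f_{i'}(x^{(0)})$ then yields the initialization term as you describe.
\end{itemize}
This is how the paper treats its terms $\bM_{3,1}$ and $\bM_{3,3}$. With this repair, your row-wise route is a valid alternative to the paper's kernel estimate via Lemma~\ref{lem:RC-interaction}. It even gives a sharper noise term, $\cO(\gamma^2n^2D_{\avg}\sigma^2T)$ in place of $\cO(\gamma^2n^2D\sigma^2T)$.
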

\begin{proof}
    The proof is given in Appendix \ref{pf:lem:PiX}.
\end{proof}

\begin{remark}[A tree-based analogue of consensus-error bounds]
    Lemma \ref{lem:PiX} serves the same analytical role as a conventional
    consensus-error guarantee. The quantity $\norm{\hat{\bX}^{(t)}}_F^2$
    measures the aggregate node-to-root disagreement and also controls the
    discrepancy between any pair of nodes, since
    \[
        \norm{x_i^{(t)}-x_j^{(t)}}^2
        \le
        2\norm{x_i^{(t)}-x_1^{(t)}}^2
        +2\norm{x_j^{(t)}-x_1^{(t)}}^2.
    \]
    For methods based on a mixing matrix, such disagreement is typically
    controlled through an asymptotic contraction governed by the spectral gap.
    In contrast, \eqref{eq:tree-rgt-decision-delay} shows that, after its
    initial warm-up, node $i$ uses the delayed root parameter
    $x_i^{(t)}=x_1^{(t-h_i)}$. The disagreement is therefore induced by finite
    tree-propagation delays and is controlled in Lemma \ref{lem:PiX} by the
    tree depth $D$ and the average depth $D_{\avg}$, without invoking a
    spectral gap. A shallower depth profile shortens these delays and yields a
    tighter guarantee that the local parameters remain close to the root
    trajectory.
\end{remark}

The tree structure also induces a deterministic delay: stochastic-gradient
noise generated at layer $m$ cannot affect the root iterate before it has
traversed the $m$ push steps needed to reach the root.
\begin{lemma}
    \label{lem:independ1}
    Suppose Assumption \ref{a.var} holds. For every $m\in\{0,1,\ldots,D\}$ and
    every $t\ge m$,
    \[
        \expect\left\langle
        \nabla f(x_1^{(t)}),
        \sum_{j\in\cI_{1,m}}
        \prt{
            g_j(x_j^{(t-m)},\xi_j^{(t-m)})
            -\nabla f_j(x_j^{(t-m)})
        }
        \right\rangle = 0 .
    \]
\end{lemma}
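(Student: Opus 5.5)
The plan is to show that $x_1^{(t)}$ is measurable with respect to the $\sigma$-algebra generated by samples drawn strictly before time $t-m$. Then the tower property, applied with the filtration $\cF_{t-m}$, kills the inner product.

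\textbf{Step 1: a measurability claim.} I will prove that for every $t\ge m$, $x_1^{(t)}$ is $\cF_{t-m}$-measurable whenever the layer $\cI_{1,m}$ is nonempty and $m\ge1$. For $m=0$ this is immediate, since $\bX^{(t)}$ is $\cF_t$-measurable. For $m\ge1$, I would not prove this by a crude bound. Instead I would use the explicit representation \eqref{eq:root-diff} together with Lemma~\ref{lem:Ak}:
\[
    x_1^{(t)}=x^{(0)}-\gamma\sum_{s=0}^{t-1}\sum_{k=0}^{\min\{s,D\}}\sum_{j\in\cI_{1,k}}g_j^{(s-k)} .
\]
By \eqref{eq:tree-rgt-root-tracker}, the root at time $t$ has seen only $y_1^{(s)}$ for $s\le t-1$. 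These involve samples $\xi_j^{(s-h_j)}$ with $s-h_j\le t-1-h_j$.

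\textbf{Step 2: compare time indices with the noise term.} The noise term in the statement uses only the time-$(t-m)$ samples $\xi_j^{(t-m)}$ of agents $j$ with $h_j=m$. So I need to check that no sample $\xi_j^{(t-m)}$ with $h_j=m$ enters $x_1^{(t)}$. Such a sample would enter only via $s-h_j=t-m$, that is, $s=t$, which is excluded. However, $x_1^{(t)}$ also depends on samples of other agents at time $t-m$. This happens via $s=t-m+h_i\le t-1$ for agents with $h_i<m$, and also through the iterates $x_i^{(\cdot)}$.

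So full $\cF_{t-m}$-measurability may fail. I will therefore replace $\cF_{t-m}$ by the finer $\sigma$-algebra
\[
    \cG_{t,m}:=\sigma\bigl(\cF_{t-m},\ \xi_i^{(r)}:\ (i,r)\neq(j,t-m)\ \forall j\in\cI_{1,m},\ r\le t-1\bigr).
\]
The task is then to verify that $x_1^{(t)}$ is $\cG_{t,m}$-measurable, while each $x_j^{(t-m)}$ with $j\in\cI_{1,m}$ is $\cF_{t-m}\subseteq\cG_{t,m}$-measurable. The point is that, by independence in Assumption~\ref{a.var}, the samples $\{\xi_j^{(t-m)}\}_{j\in\cI_{1,m}}$ are independent of $\cG_{t,m}$. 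Hence
\[
    \expect\bigl[g_j(x_j^{(t-m)},\xi_j^{(t-m)})-\nabla f_j(x_j^{(t-m)})\mid\cG_{t,m}\bigr]=0 .
\]

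\textbf{Step 3: the main obstacle.} The obstacle is exactly the measurability of $x_1^{(t)}$ with respect to $\cG_{t,m}$. One must trace every occurrence of $\xi_j^{(r)}$ in the root recursion. From \eqref{eq:tree-rgt-root-tracker}, sample $\xi_j^{(r)}$ first influences $y_1$ at time $r+h_j$, and therefore $x_1$ at time $r+h_j+1$. For $r=t-m$ and $h_j=m$, this time is $t+1>t$. I will verify this by induction on $t$, using $x_1^{(s+1)}=x_1^{(s)}-\gamma y_1^{(s)}$. At each stage, all gradients in $y_1^{(s)}$ are evaluated at iterates that depend only on earlier-influencing samples.

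With measurability in hand, conditioning the inner product on $\cG_{t,m}$, pulling out $\nabla f(x_1^{(t)})$, and taking total expectation gives zero. Integrability follows from the second-moment bounds in Assumption~\ref{a.var} via Cauchy--Schwarz.
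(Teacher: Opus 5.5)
Your final argument is correct and is essentially the paper's proof. Your $\cG_{t,m}$ plays the role of the paper's layer-excluded $\sigma$-algebra $\cF_{t,m}$; the cutoff $r\le t-1$ instead of $r\le t$ makes no difference. Your induction over root updates shows that the samples $\xi_j^{(t-m)}$ with $j\in\cI_{1,m}$ first enter $y_1^{(t)}$, so they cannot affect $x_1^{(t)}$. That is exactly how the paper gets measurability before invoking independence and the tower property.

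You were right to drop the opening claim of Step~1, which is false for $m\ge1$ because $y_1^{(t-1)}$ already contains $g_1^{(t-1)}$. Also, your integrability remark tacitly needs Assumption~\ref{a.smooth} to make $\nabla f(x_1^{(t)})$ square-integrable, not only Assumption~\ref{a.var}.
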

\begin{proof}
    For any given $t$ and $m$ satisfying $m\in \crk{0,1,\cdots,D}$ and $t\ge m$, define the layer-excluded $\sigma$-algebra
    \[
    \cF_{t,m} := \sigma\bigl( \xi_\ell^{(r)}: \ell\in\cN, 0\le r\le t, \ell\notin\cI_{1,m} \text{ or } r\ne t-m \bigr).
    \]
    Since $\cF_{t-m} \subseteq \cF_{t,m}$, every $x_j^{(s)}$ with
    $j\in\cI_{1,m}$ is $\mathcal{\cF}_{t,m}$-measurable. Furthermore, the root iterate $x_1^{(t)}$ is $\cF_{t,m}$-measurable. To see this, proceed by induction over the root updates before time $t$. The initial iterate is
    deterministic. If the root iterates through time $r<t$ are
    $\cF_{t,m}$-measurable, then
    \eqref{eq:tree-rgt-decision-delay} makes every evaluation point in
    \eqref{eq:tree-rgt-root-tracker} for $y_1^{(r)}$
    $\cF_{t,m}$-measurable. None of the samples appearing in that
    tracker is excluded from $\mathcal{F}_{t,m}$: for
    $j\in\cI_{1,m}$, the equality $r-h_j=t-m$ would require $r=t$.
    Hence, $y_1^{(r)}$ and
    $x_1^{(r+1)}=x_1^{(r)}-\gamma y_1^{(r)}$ are
    $\mathcal{F}_{t,m}$-measurable. This proves the claim and also shows that
    the excluded layer samples first enter $y_1^{(t)}$, so they can first
    affect $x_1^{(t+1)}$.

    Denote $\theta_j^{(s)} := g_j(x_j^{(s)},\xi_j^{(s)}) -\nabla f_j(x_j^{(s)})$. Then, by the mutual independence in Assumption \ref{a.var}, the excluded samples
    $\{\xi_j^{(s)}:j\in\cI_{1,m}\}$ are independent of
    $\cF_{t,m}$. Therefore, for every $j\in\cI_{1,m}$, the pointwise
    unbiasedness in Assumption \ref{a.var} and $x_j^{t-m}\in \cF_{t-m}\subseteq \cF_{t,m}$ give
    \[
        \expect\brk{
            \theta_{j}^{t-m}
            \mid\mathcal{F}_{t,m}
        }
        =
        \expect\brk{
            \theta_{j}^{t-m} \mid \cF_{t-m}
        }
        =\mathbf{0}.
    \]
    Since $\nabla f(x_1^{(t)})$ is $\mathcal{\cF}_{t,m}$-measurable, the tower
    property now yields
    \[
    \begin{aligned}
        &\expect\left\langle
            \nabla f(x_1^{(t)}),
            \sum_{j\in\cI_{1,m}}
            \theta_j^{(t-m)}
        \right\rangle 
        = 
        \expect\left\langle
            \nabla f(x_1^{(t)}),
            \sum_{j\in\cI_{1,m}}\expect\brk{
                \theta_j^{(t-m)}
                \mid\mathcal{F}_{t,m}
            }
        \right\rangle
        =0.
    \end{aligned}
    \]
\end{proof}

With the cumulative root increments and node-wise disagreement controlled by
Lemmas \ref{lem:X_diff} and \ref{lem:PiX}, respectively, and the relevant
noise cross term eliminated by Lemma \ref{lem:independ1}, we can now close the
descent argument. The following lemma is the main descent lemma of the analysis:
it combines the smoothness-based decrease of the global objective along the
root trajectory with the preceding error estimates to bound the averaged
squared gradient norm. It is the final ingredient needed to prove
Theorem \ref{thm:convergence}.
\begin{lemma}
    \label{lem:smooth}
    Suppose Assumptions \ref{a.smooth} and \ref{a.var} hold. If
    $\gamma \le 1/(40 n D L)$, then, for every integer $T\ge1$,
    \[
    \begin{aligned}
        \frac{1}{T} \sum_{t=0}^{T-1}\expect \norm{\nabla f(x_1^{(t)})}^2
        &\le \frac{10\Delta_f}{\gamma n T}
        + 50 \gamma L \sigma^2
        + \frac{50 D_{\avg}}{nT}\norm{\nabla\bF^{(0)}}_F^2,
    \end{aligned}
    \]
    where $\Delta_f:=f(x^{(0)})-\inf_x f(x)$.
\end{lemma}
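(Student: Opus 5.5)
Write $x^{(t)}:=x_1^{(t)}$ for the root iterate. By \eqref{eq:root-diff}, $x^{(t+1)}=x^{(t)}-\gamma\bpi^\T\bY^{(t)}$. Using \eqref{eq:pp2-layer} and Lemma~\ref{lem:Ak}, the tracker splits as
\[
\bpi^\T\bY^{(t)}=\sum_{m=0}^{\min\{t,D\}}\bfe_{1,m}^\T\bigl(\bTh^{(t-m)}+\nabla\bar{\bF}^{(t-m)}+\nabla\bF(\bar{\bX}^{(t-m)})\bigr).
\]
The last term needs care. For $t\ge D$ it equals $n\nabla f(x^{(t)})$ plus a delay error $\sum_{m}\bfe_{1,m}^\T[\nabla\bF(\bar\bX^{(t-m)})-\nabla\bF(\bar\bX^{(t)})]$, which is bounded by $L$ times sums of the root increments $\Delta\bar\bX$. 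For $t<D$ there is also a missing-layer part: the layers with $h_i>t$ are absent. This part is controlled by $\sum_{k}(n-n_k)$, that is, by $nD_{\avg}$ times the initial gradients.

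I would actually avoid this delay-error route. Since $\nabla\bar{\bF}^{(t-m)}$ already compares against $\bar{\bX}^{(t-m)}$, I instead write $\nabla\bF^{(t-m)}-\nabla\bF(\bar\bX^{(t)})$ and bound it via $\hat\bX$ and $\Delta\bar\bX$. I expect that to be how the $n L^2$ constant in Lemma~\ref{lem:Ak-barF} is absorbed.

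Apply $L$-smoothness of $f$ at the root:
\[
f(x^{(t+1)})\le f(x^{(t)})-\gamma\langle\nabla f(x^{(t)}),\bpi^\T\bY^{(t)}\rangle+\tfrac{L}{2}\|x^{(t+1)}-x^{(t)}\|^2 .
\]
The last term equals $\frac{L}{2n}\|\Delta\bar\bX^{(t)}\|_F^2$. In the inner product, the noise terms have zero mean by Lemma~\ref{lem:independ1}, layer by layer for each $m$. The main deterministic term gives $-\gamma n\|\nabla f(x^{(t)})\|^2$, up to the warm-up deficit. The remaining error $E^{(t)}$ is handled by Young's inequality, $\gamma|\langle\nabla f,E\rangle|\le\frac{\gamma n}{4}\|\nabla f\|^2+\frac{\gamma}{n}\|E\|^2$.

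Next I sum over $t=0,\ldots,T-1$ and telescope $f$, which yields $\Delta_f$. The error $\sum_t\|E^{(t)}\|^2$ is bounded in three pieces:
\begin{itemize}
\item the disagreement piece, by Lemma~\ref{lem:Ak-barF}, giving $nL^2\sum\|\hat\bX\|_F^2$;
\item the warm-up piece, by $\sum_{k<D}(n-n_k)=nD_{\avg}$ times the initial gradient energy, using that absent agents sit at $x^{(0)}$;
\item the delay piece, by $DL^2\sum\|\Delta\bar\bX\|_F^2$ through Cauchy--Schwarz over at most $D$ lags.
\end{itemize}
I then substitute Lemma~\ref{lem:X_diff} for $\sum\|\Delta\bar\bX\|_F^2$ and Lemma~\ref{lem:PiX} for $\sum\|\hat\bX\|_F^2$. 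Both are valid because $\gamma\le1/(40nDL)$ implies their stepsize conditions, using $D_{\avg}\le D$.

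The resulting feedback coefficients on $\sum\|\nabla f(x^{(t)})\|^2$ take the form $\gamma^3n^3L^2D D_{\avg}\cdot nL^2/n$ and $\gamma^3 n^3 L\cdot(\ldots)$. Relative to $\gamma n$, these are $O(\gamma^2n^2D^2L^2)\le 1/1600$-type constants, so they are absorbed into the left-hand side with a margin, leaving a factor like $\gamma n/5$. Dividing by $\gamma nT/10$ then gives the three terms. Noise terms such as $\gamma^2 n^2\sigma^2 T\cdot L/n$ become $50\gamma L\sigma^2$ after simplification with $\gamma\le 1/(40nDL)$. Initialization terms become $\frac{50D_{\avg}}{nT}\|\nabla\bF^{(0)}\|_F^2$, using $\gamma^2n^2D^2L^2\le1$ to cancel the extra factors.

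\textbf{Main obstacle.} I expect the hardest step to be the precise treatment of the warm-up $t<2D$ together with the delay mismatch. The goal there is to charge all initial-point effects to $D_{\avg}\|\nabla\bF^{(0)}\|_F^2$ rather than $D$, and to verify that the constants close with exactly $10,50,50$. I would first try to bound the warm-up deficit directly via the tail-sum identity and Lemma~\ref{lem:Rk_norm}.
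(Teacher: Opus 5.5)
\paragraph{The gap: your delay piece.} Write the delay mismatch as $E^{(t)}:=\sum_{i:\,h_i\le t}\bigl[\nabla f_i(x_1^{(t-h_i)})-\nabla f_i(x_1^{(t)})\bigr]$.

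\emph{The constant is wrong.} Cauchy--Schwarz over the lags gives $\norm{E^{(t)}}^2\le L^2\sum_i h_i\sum_{s=t-h_i}^{t-1}\norm{\Delta\bar{\bX}^{(s)}}_F^2$. Summing over $t$ then gives $\sum_t\norm{E^{(t)}}^2\le L^2\sum_i h_i^2\sum_s\norm{\Delta\bar{\bX}^{(s)}}_F^2\le nDD_{\avg}L^2\sum_s\norm{\Delta\bar{\bX}^{(s)}}_F^2$, not $DL^2\sum_s\norm{\Delta\bar{\bX}^{(s)}}_F^2$. Indeed, take $f_i=\frac L2\norm{x}^2$ and a root moving at constant velocity. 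Then $\norm{E^{(t)}}^2=L^2nD_{\avg}^2\norm{\Delta\bar{\bX}^{(t)}}_F^2$, so your pathwise constant fails whenever $nD_{\avg}^2>D$.

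\emph{With the correct constant, the variance term is too large.} After Young's inequality this piece contributes $\gamma DD_{\avg}L^2\sum_t\expect\norm{\Delta\bar{\bX}^{(t)}}_F^2$. The noise term $4\gamma^2n^2\sigma^2T$ of Lemma~\ref{lem:X_diff} turns this into $4\gamma^3n^2DD_{\avg}L^2\sigma^2T$. After dividing by $\gamma nT/10$, that is $40\gamma L\sigma^2\cdot(\gamma nDL)\cdot D_{\avg}$. Under $\gamma\le1/(40nDL)$ this is bounded only by $D_{\avg}\gamma L\sigma^2$, which is $\Theta(n)\gamma L\sigma^2$ on a path rooted at an endpoint. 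Repairing it would require $\gamma\lesssim1/(nDD_{\avg}L)$, which worsens the $D$-dependence of the final rate.

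The loss comes from Cauchy--Schwarz over the lag window. It treats the stochastic parts of successive root increments as coherent, whereas they are martingale differences built from distinct node--time samples. Your warm-up piece has a related slip. The missing-layer terms are $\nabla f_i(x_1^{(t)})$, evaluated at the root, so reducing them to initial gradients leaves $\nabla f_i(x_1^{(t)})-\nabla f_i(x^{(0)})$ still to be charged.

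\paragraph{The missing idea: the delay mismatch is the \emph{current} node-to-root disagreement.} Hence $\Delta\bar{\bX}$ is not needed for this piece. Sum by parts as in Lemma~\ref{lem:X_diff}:
\[
\bpi^\T\sum_m\bA_m\nabla\bF(\bar{\bX}^{(t-m)})=n\nabla f(x_1^{(t)})^\T+\bpi^\T\bPi_{\bC}\bC^{t}\nabla\bF^{(0)}+V^{(t)},
\]
where $V^{(t)}:=\bpi^\T\sum_{m=0}^{\min\{t,D\}-1}\bPi_{\bC}\bC^m\bigl(\nabla\bF(\bar{\bX}^{(t-m)})-\nabla\bF(\bar{\bX}^{(t-m-1)})\bigr)$.

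\begin{itemize}
\item Since $\bpi^\T\bPi_{\bC}\bC^m$ is the negative indicator of $\{i:h_i>m\}$, reordering by node and telescoping in $m$ gives $V^{(t)}=-\sum_i\bigl[\nabla f_i(x_1^{(t)})-\nabla f_i(x_1^{(t-\min\{h_i,t\})})\bigr]^\T$.
\item By \eqref{eq:tree-rgt-decision-delay}, $x_1^{(t-\min\{h_i,t\})}=x_i^{(t)}$ for every $i$, warm-up agents included. Hence $\norm{V^{(t)}}^2\le nL^2\norm{\hat{\bX}^{(t)}}_F^2$.
\item For the boundary term, Lemma~\ref{lem:Rk_norm} gives $\sum_t\norm{\bPi_{\bC}\bC^t}_2^2\le2nD_{\avg}$. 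This produces the contribution $4\gamma D_{\avg}\norm{\nabla\bF^{(0)}}_F^2$.
\end{itemize}

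All delay and warm-up effects are thus charged to $\hat{\bX}$ and controlled by Lemma~\ref{lem:PiX}. Its noise term $32\gamma^2n^2D\sigma^2T$ exploits independence across time and costs only $\cO(\gamma^2nDL^2\sigma^2)=\cO(\gamma L\sigma^2)$. The quantity $\Delta\bar{\bX}$ then enters only through the smoothness term $\frac{L}{2n}\sum_t\norm{\Delta\bar{\bX}^{(t)}}_F^2$, where Lemma~\ref{lem:X_diff} contributes the correct order $2\gamma^2nL\sigma^2T$.

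With this replacement, the rest of your outline closes with the stated constants:
\begin{itemize}
\item Lemma~\ref{lem:independ1} for the noise cross term;
\item Young's inequality with weight $\gamma n/4$;
\item Lemma~\ref{lem:Ak-barF} for the disagreement-gradient piece;
\item absorption via $\gamma nDL\le1/40$.
\end{itemize}
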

\begin{proof}
    The proof is given in Appendix \ref{pf:lem:smooth}.
\end{proof}

\section{Numerical Experiments}
\label{sec:numerical-experiments}

We evaluate Tree-RGT on a heterogeneous binary-classification problem and
compare its dependence on the communication network with that of representative
mixing-based and routing-based methods.  We consider three graph--matrix pairs
from Section~\ref{sec:examples}: the lazy-ring pair
$(\cG_{\mathrm{r}},\bW_{\mathrm{r}})$, the undirected exponential pair
$(\cG_{\mathrm{ex}},\bW_{\mathrm{ex}})$, and the balanced double-star pair
$(\cG_{\mathrm{ds}},\bW_{\mathrm{ds}})$.  These pairs provide contrasting
graph-distance and matrix-mixing regimes.

\paragraph{Problem and data.}
Following the nonconvex logistic-regression experiment in
\cite{huang2026accelerated}, we classify airplanes (label $+1$) and trucks
(label $-1$) from CIFAR-10 \cite{krizhevsky2009learning}.  The resulting
$10{,}000$ training samples are partitioned among $n$ agents.  If
$\cS_i$ is the local data set of agent $i$, with feature--label pairs
$(u_{i,j},v_{i,j})\in\reals^p\times\{-1,+1\}$, we solve
\begin{equation}
    \label{eq:experiment-nonconvex-logistic}
    \min_{x\in\reals^p}
    f(x):=\frac{1}{n}\sum_{i=1}^n f_i(x),
    \qquad
    f_i(x):=
    \frac{1}{|\cS_i|}\sum_{j\in\cS_i}
    \log\!\left(1+\exp(-v_{i,j} u_{i,j}^\T x)\right)
    +\frac{\omega}{2}\sum_{q=1}^p
    \frac{\brk{x}_q^2}{1+\brk{x}_q^2},
\end{equation}
where $\omega=0.05$.  Each image is rescaled to $[0,1]$, flattened, augmented
with a constant bias coordinate, and normalized to unit Euclidean norm, giving
$p=3073$.  To induce heterogeneity, we sort the samples by label and divide
them into $n$ contiguous shards whose sizes differ by at most one.

\paragraph{Experimental protocol.}
We compare Tree-RGT with decentralized SGD (DSGD), decentralized stochastic
gradient tracking (DSGT) \cite{pu2021distributed}, DSMT
\cite{huang2026accelerated}, STPP \cite{you2025distributed}, RelaySUM
\cite{vogels2021relaysum}, and ideal synchronous centralized SGD (CSGD).
CSGD averages one stochastic gradient per agent and serves only as an
optimization benchmark.  For $n\in\{128,256\}$, every decentralized method is
run for 9000 communication rounds, and CSGD for 9000 corresponding synchronous
updates; all methods use mini-batch size one. Every method uses the same unit-norm
Gaussian initialization and the same local sampling schedules, deterministically
generated from the seed. 

Algorithm~\ref{alg:gt} constructs a rooted breadth-first spanning tree $\cT$
shared by RelaySUM, STPP, and Tree-RGT, with agent~$1$ as its root. Let $D_{\cT}$ denote its depth. For the lazy ring, exponential graph, and double-star, $D_{\cT}$ equals $n-1$, $\log_2 n$, and $3$, respectively. 

All the algorithms initialize with the same stepsize, $\gamma = 0.05$, except STPP and Tree-RGT, which use $\gamma/n$, and RelaySUM, which uses $\gamma \sqrt{D_{\cT}}$. These modifications reflect the respective update mechanisms. Specifically, STPP and Tree-RGT employ a tracking estimator whose accumulated update is effectively scaled by $n$ relative to the averaged stochastic gradients as the iterations progress. In RelaySUM, information propagates along the spanning tree with a topology-dependent delay, causing the effective update magnitude to depend on the tree diameter. A one-time, diameter-based stepsize adjustment compensates for this effect and improves comparability across trees. The stepsize is reduced by $60\%$ every $4000$ rounds. Following~\cite{huang2026accelerated}, DSMT constructs the effective mixing matrix $\widetilde{\bW} = (\bI+\bW)/2$ from the selected weight matrix $\bW$ and sets its hyperparameters as $\eta_w=1/(1+\sqrt{1-\rho_w^2})$, $\widetilde{\rho}_w=\sqrt{\eta_w}$, and $\beta=1-(1-\widetilde{\rho}_w)/n^{1/3}$, where $\rho_w$ is the second-largest eigenvalue modulus of $\widetilde{\bW}$.

At each recorded round, we evaluate the squared norm of the full empirical
gradient at the root iterate for Tree-RGT and STPP, the network-average
iterate for the other decentralized methods, and the common iterate for CSGD.
Figure~\ref{fig:ncvx-logistic} reports the mean of this quantity over ten
seeds; shaded bands indicate the corresponding pointwise minimum--maximum
envelope.

\begin{figure}[p]
    \centering
    \subfloat[Lazy ring, $n=128$.]{
        \includegraphics[width=0.45\textwidth]{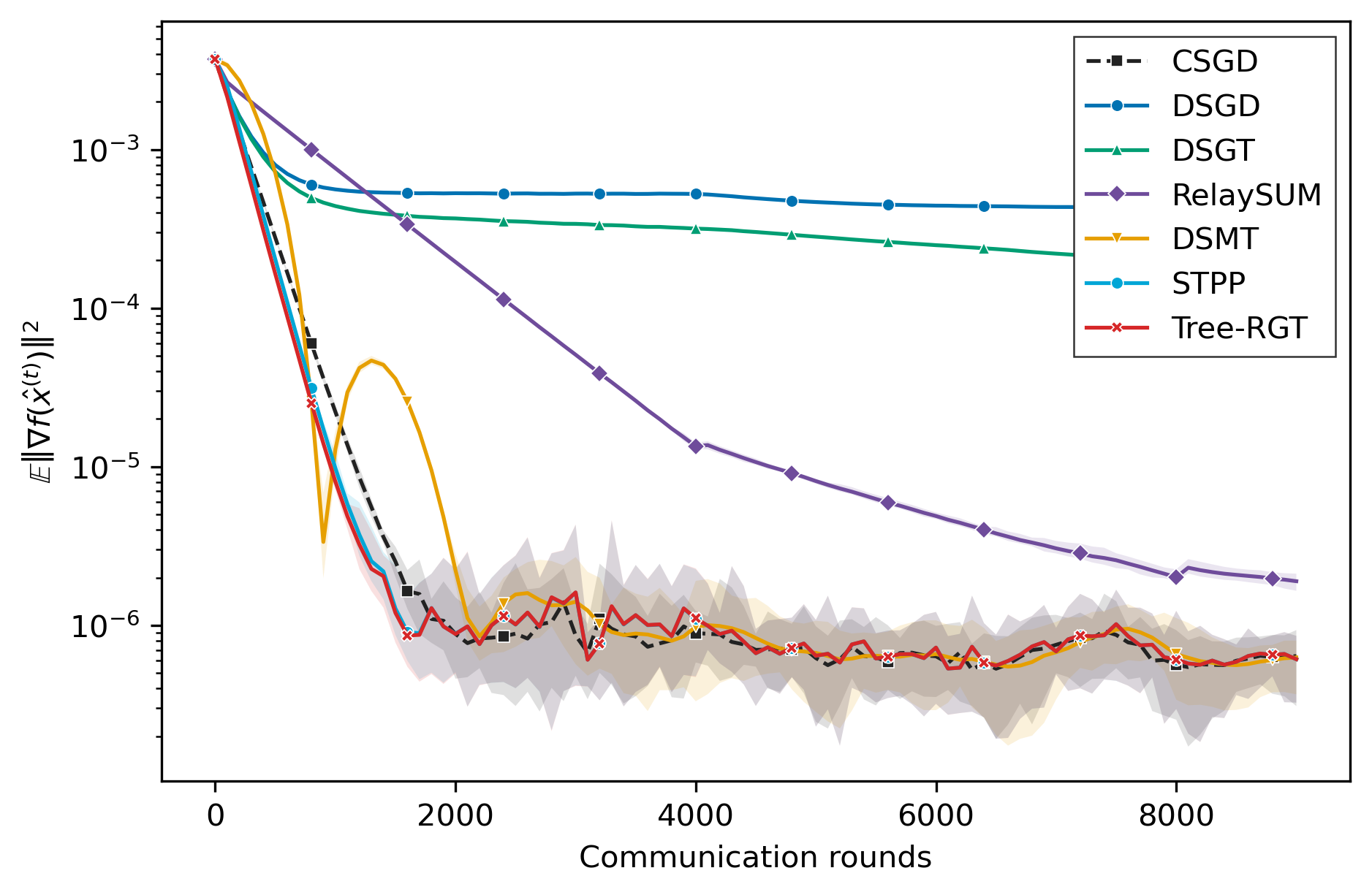}
    }%
    \hfill
    \subfloat[Lazy ring, $n=256$.]{
        \includegraphics[width=0.45\textwidth]{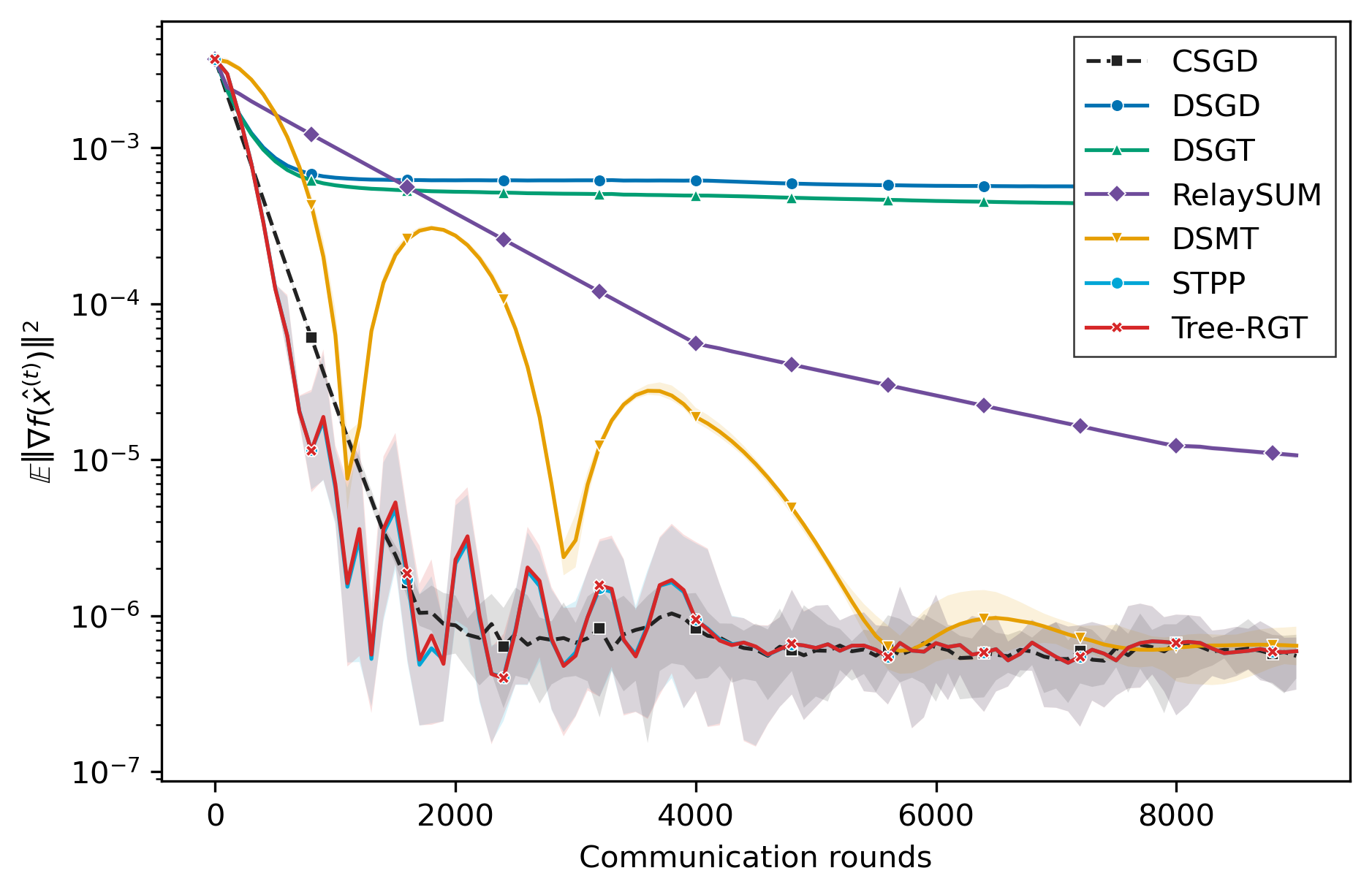}
    }

    \subfloat[Exponential graph, $n=128$.]{
        \includegraphics[width=0.45\textwidth]{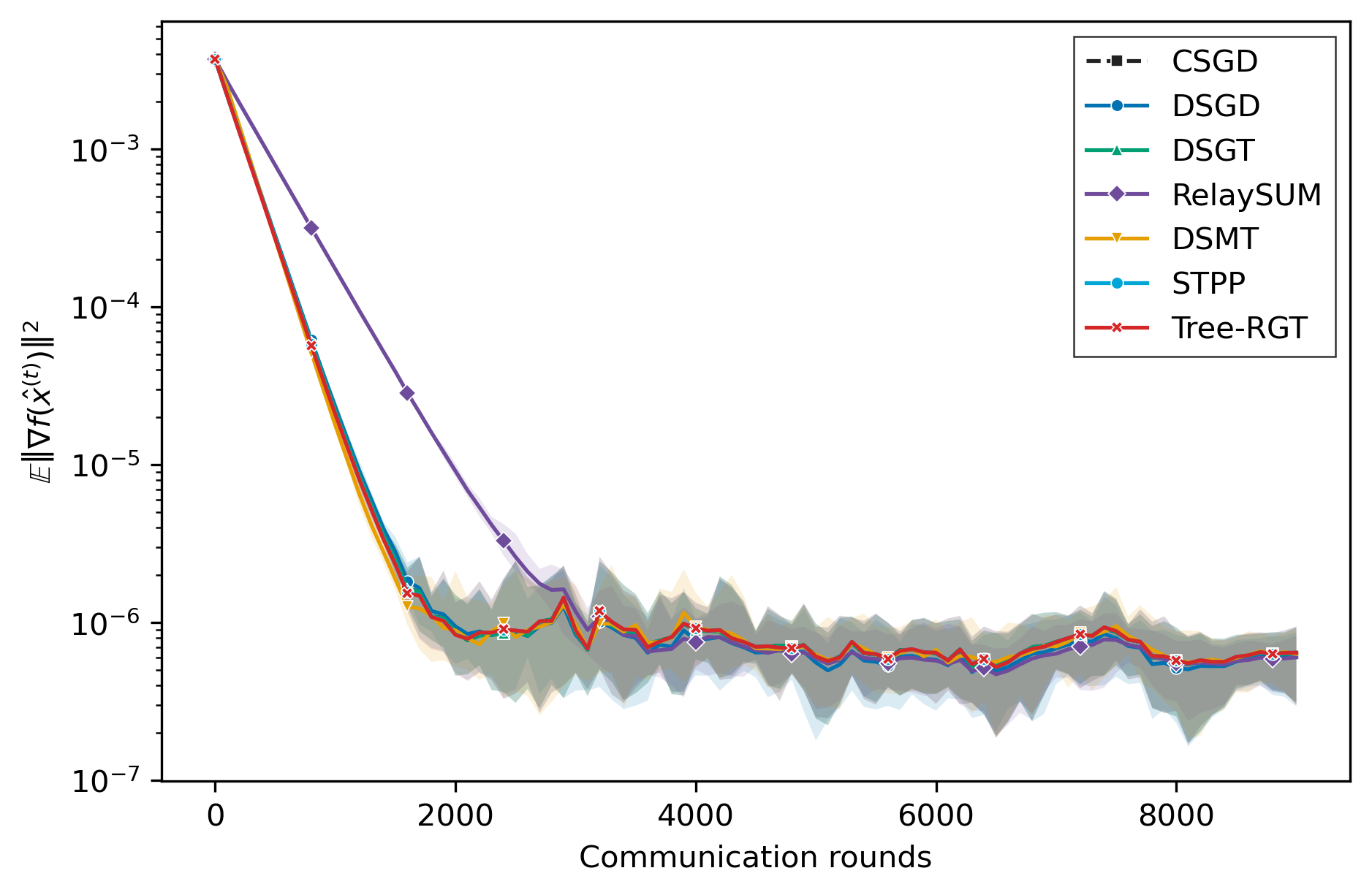}
    }%
    \hfill
    \subfloat[Exponential graph, $n=256$.]{
        \includegraphics[width=0.45\textwidth]{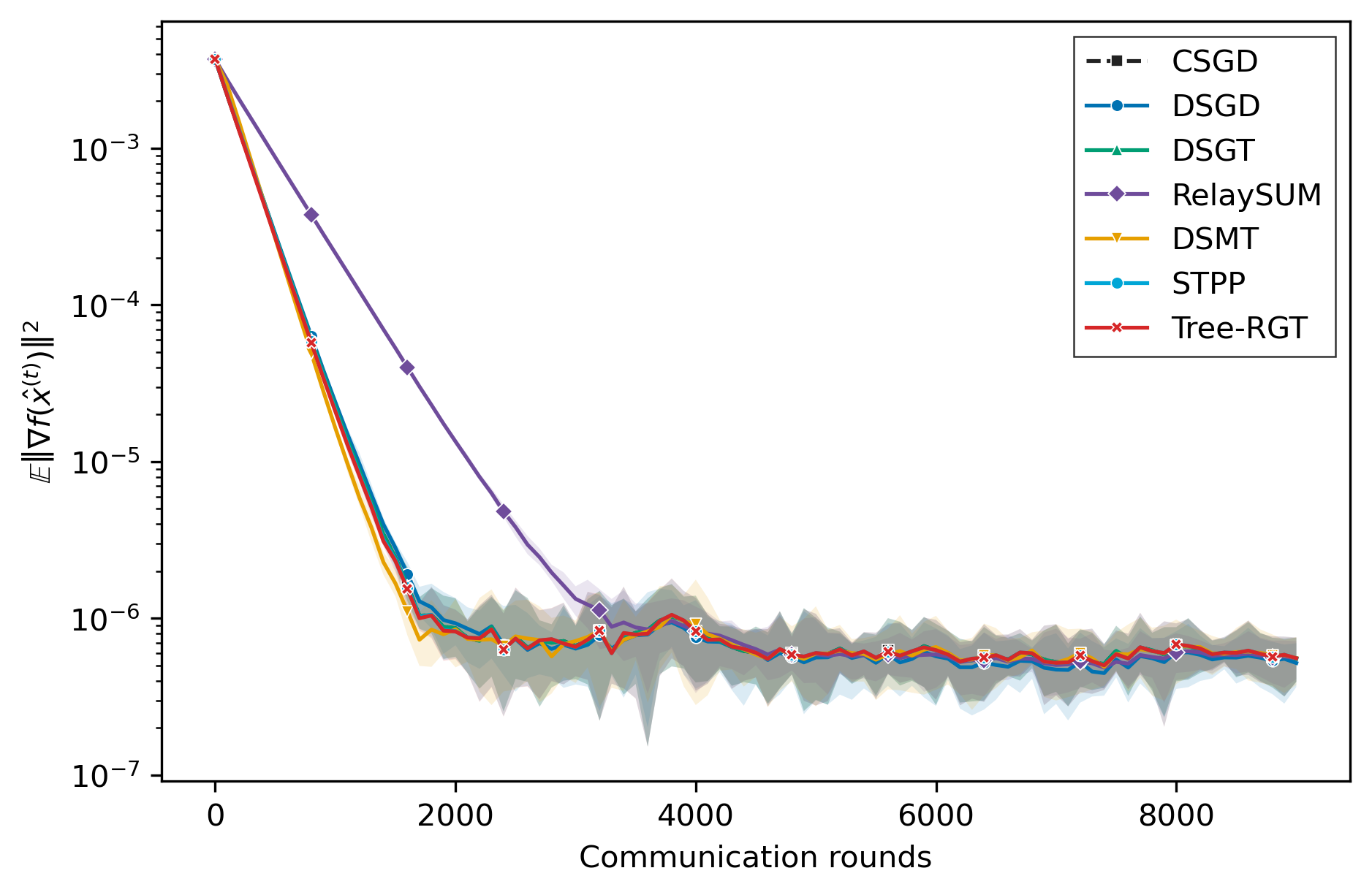}
    }

    \subfloat[Double-star, $n=128$.]{
        \includegraphics[width=0.45\textwidth]{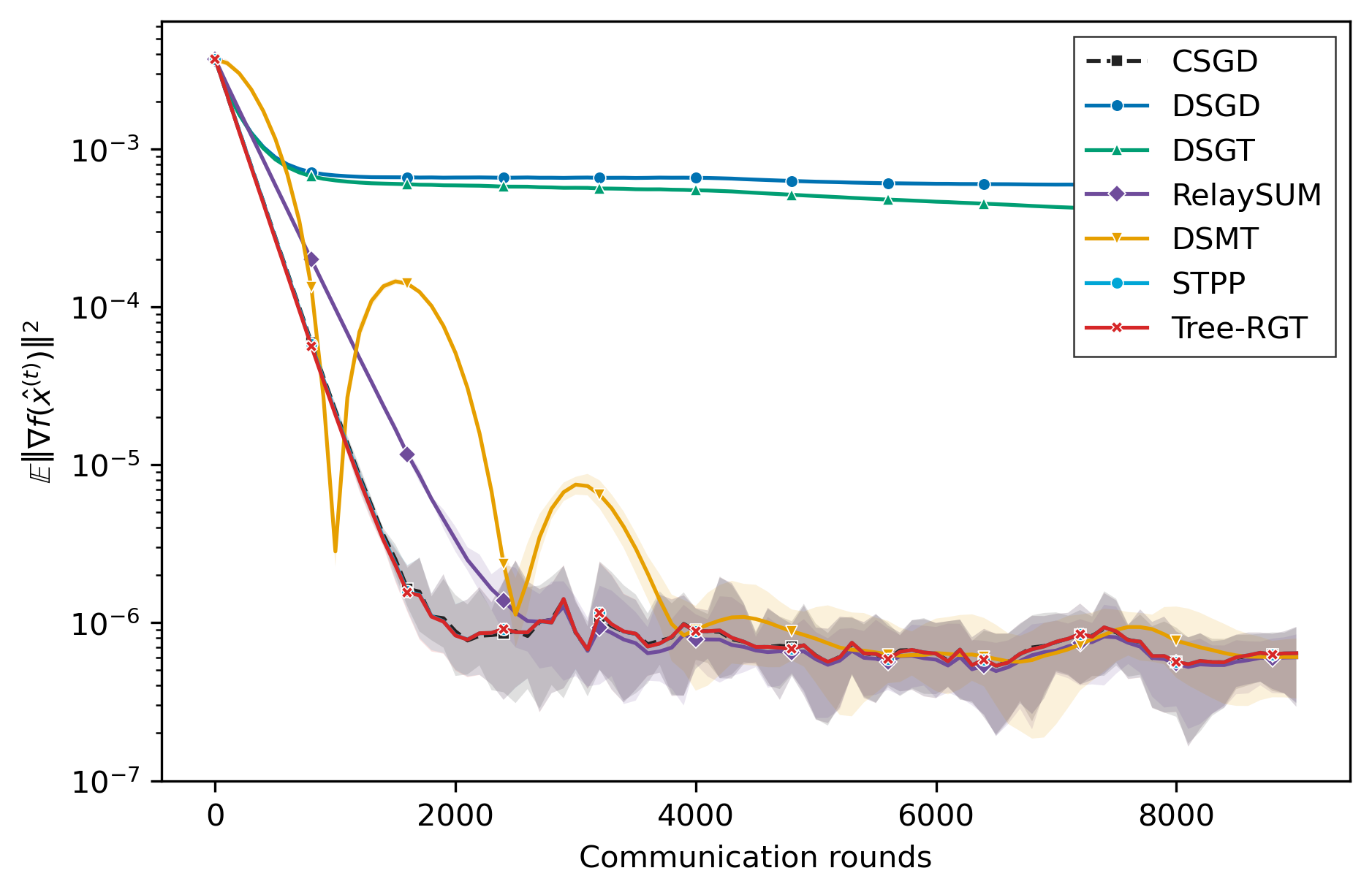}
    }%
    \hfill
    \subfloat[Double-star, $n=256$.]{
        \includegraphics[width=0.45\textwidth]{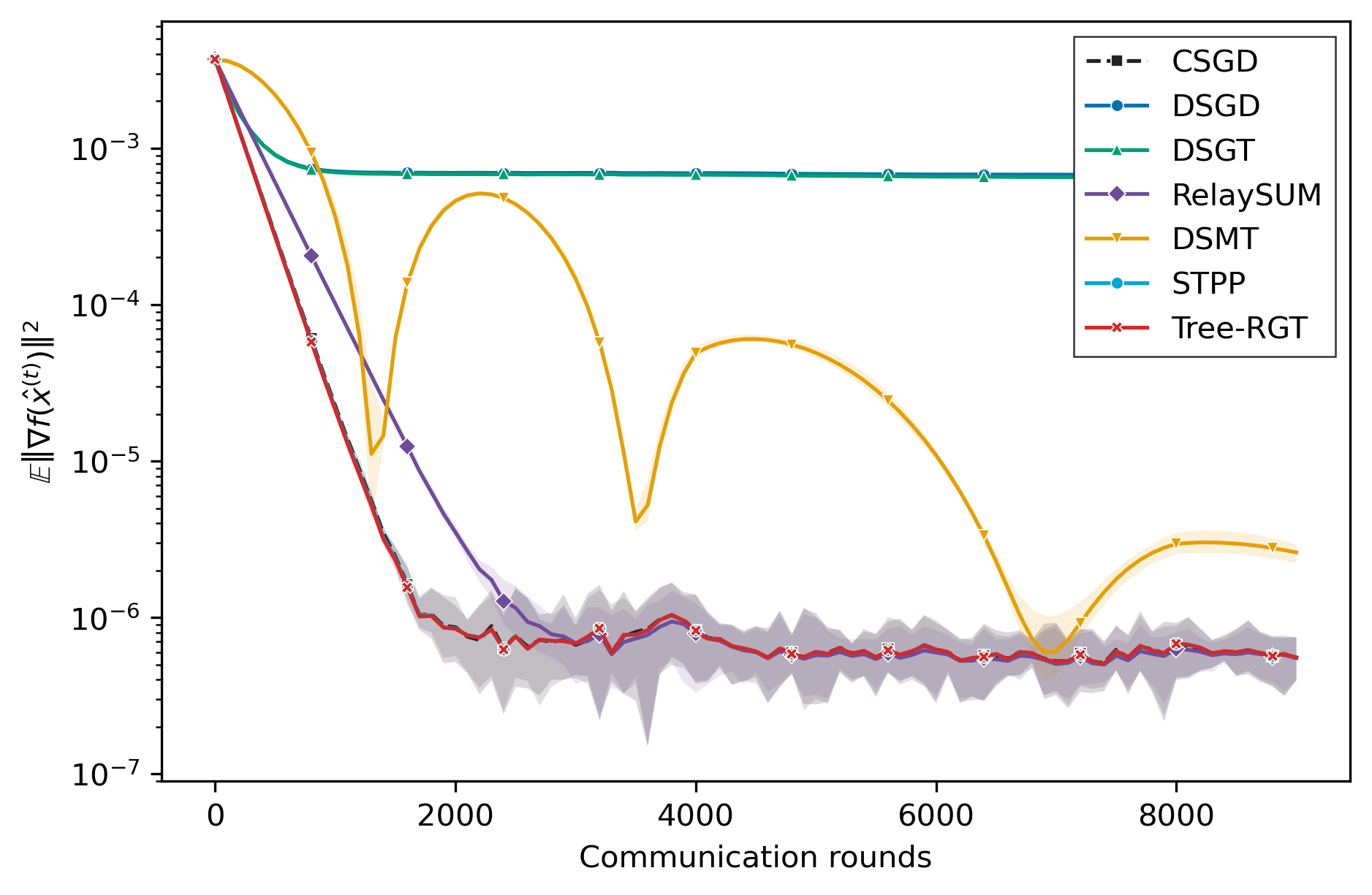}
    }
    \caption{Squared norm of the full empirical gradient versus communication
    rounds for the lazy ring (top), undirected exponential graph (middle), and
    balanced double-star (bottom), with $n=128$ (left) and $n=256$ (right).
    Curves are means over ten seeds, and shaded bands show the pointwise
    minimum--maximum envelope.}
    \label{fig:ncvx-logistic}
\end{figure}

Figure~\ref{fig:ncvx-logistic} reveals the predicted dependence on graph
distance. For a fixed node count, the transient of the tree-based methods
generally becomes shorter as $D_{\cG}$ decreases from the lazy ring to the
exponential graph and then to the double-star; this ordering is clearest for
$n=256$. On the lazy ring, whose diameter is $\Theta(n)$, DSGD and DSGT retain
substantially larger residuals, while RelaySUM exhibits a long transient.
In contrast, Tree-RGT is among the fastest and comparatively most stable
methods across all three graphs: it reaches and tracks the CSGD regime quickly
and remains comparable to STPP. This behavior is qualitatively consistent
with the $\mathcal{O}(nD_{\cG}^{2})$ transient prediction.

The double-star isolates the effect of the selected mixing matrix. Although
$D_{\cG}=3$, the associated Metropolis matrix has
$\tau_{\bW}=\Theta(n^2)$. Consequently, DSGD and DSGT retain large residuals,
and DSMT exhibits a pronounced oscillatory transient, especially for
$n=256$. In contrast, STPP, Tree-RGT, and RelaySUM remain close to CSGD.
These results indicate that topology-aware tree and routing methods exploit
short support-graph distances without inheriting the poor spectral
conditioning caused by repeated mixing with this particular matrix.

\paragraph{Stress test under an aggressive stepsize schedule.}
We further distinguish Tree-RGT from STPP on synthetic heterogeneous
instances of the nonconvex logistic-regression problem in
\eqref{eq:experiment-nonconvex-logistic}
\cite{song2022communication,you2024b}.  

To control the data heterogeneity across the nodes, we first let each node $i$ be associated with a local logistic regression model with parameter $\tilde{x}_i$ generated by $\tilde{x}_i = \tilde{x} + v_i$, where $\tilde{x} \sim \cN(0,\bI_p)$ is a common random vector, and $v_i\sim\cN(0, \sigma_h^2\bI_p)$ are random vectors generated independently. Therefore, $\{v_i\}$ decide the dissimilarities between $\tilde{x}_i$, and larger $\sigma_h$ generally amplifies the difference. After fixing $\crk{\tilde{x}_i}$, local data samples are generated that follow distinct distributions. For node $i$, the feature vectors are generated as $u_{i,j} \sim \cN(0, \bI_p)$, and $z_{i,j}\sim\cU(0,1)$. Then, the labels $v_{i,j}\in \crk{-1,1}$ are set to satisfy $z_{i,j}\le 1 + \exp(-v_{i,j}u_{i,j}^\T \tilde{x}_i)$. In the simulations, the parameters are set as follows: $n\in\crk{128,256}$, $p=400$, $|\cS_i| = 400$, $\omega = 0.01$, and $\sigma_h = 0.4$. The two algorithms initialize with the same stepsize $\gamma = 0.4$. And the stepsize is reduced by $80\%$ every $1000$ rounds to facilitate convergence. 

\begin{figure}[H]
    \centering
    \subfloat[$n=128$.]{
        \includegraphics[width=0.47\textwidth]{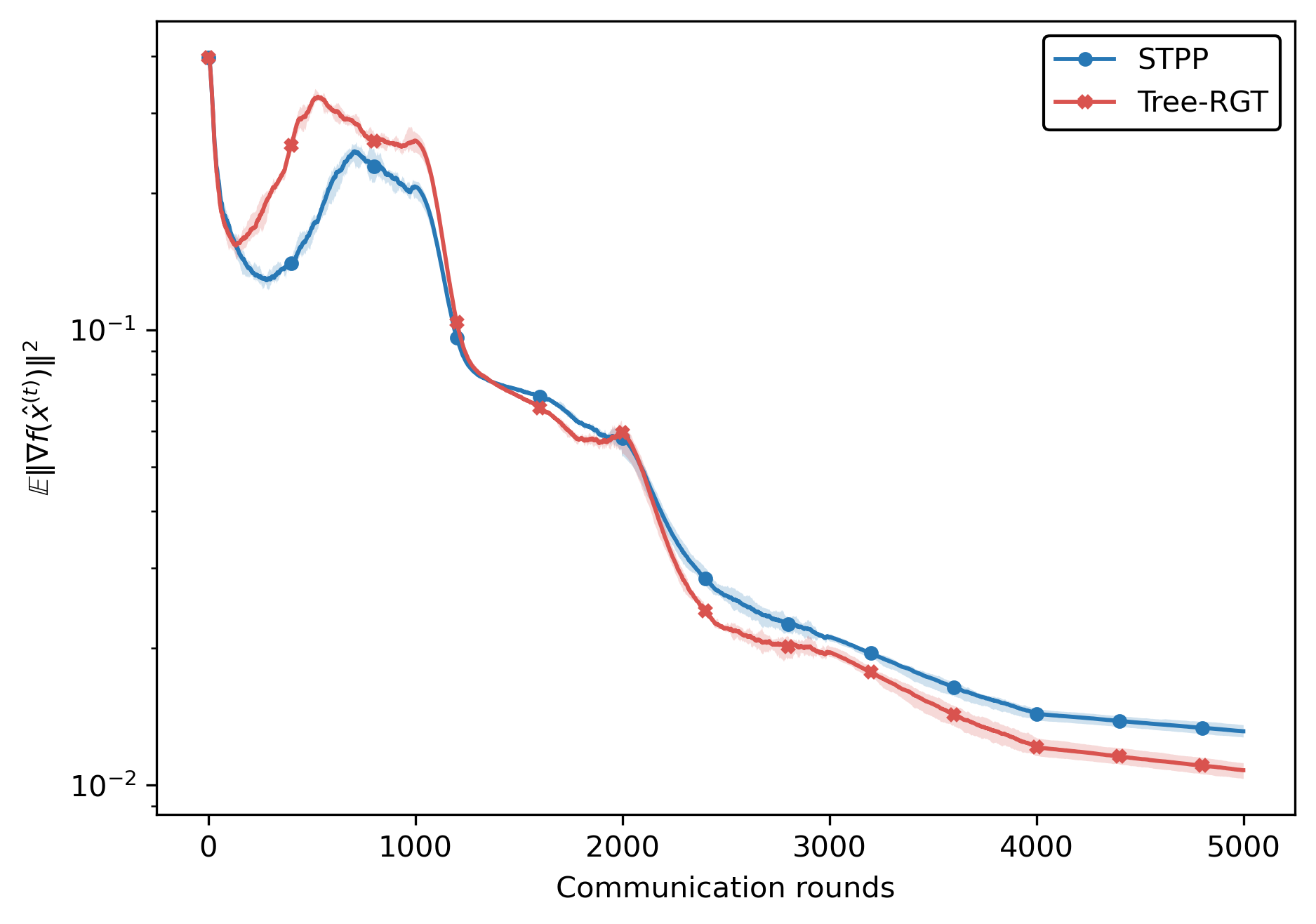}
    }%
    \hfill
    \subfloat[$n=256$.]{
        \includegraphics[width=0.47\textwidth]{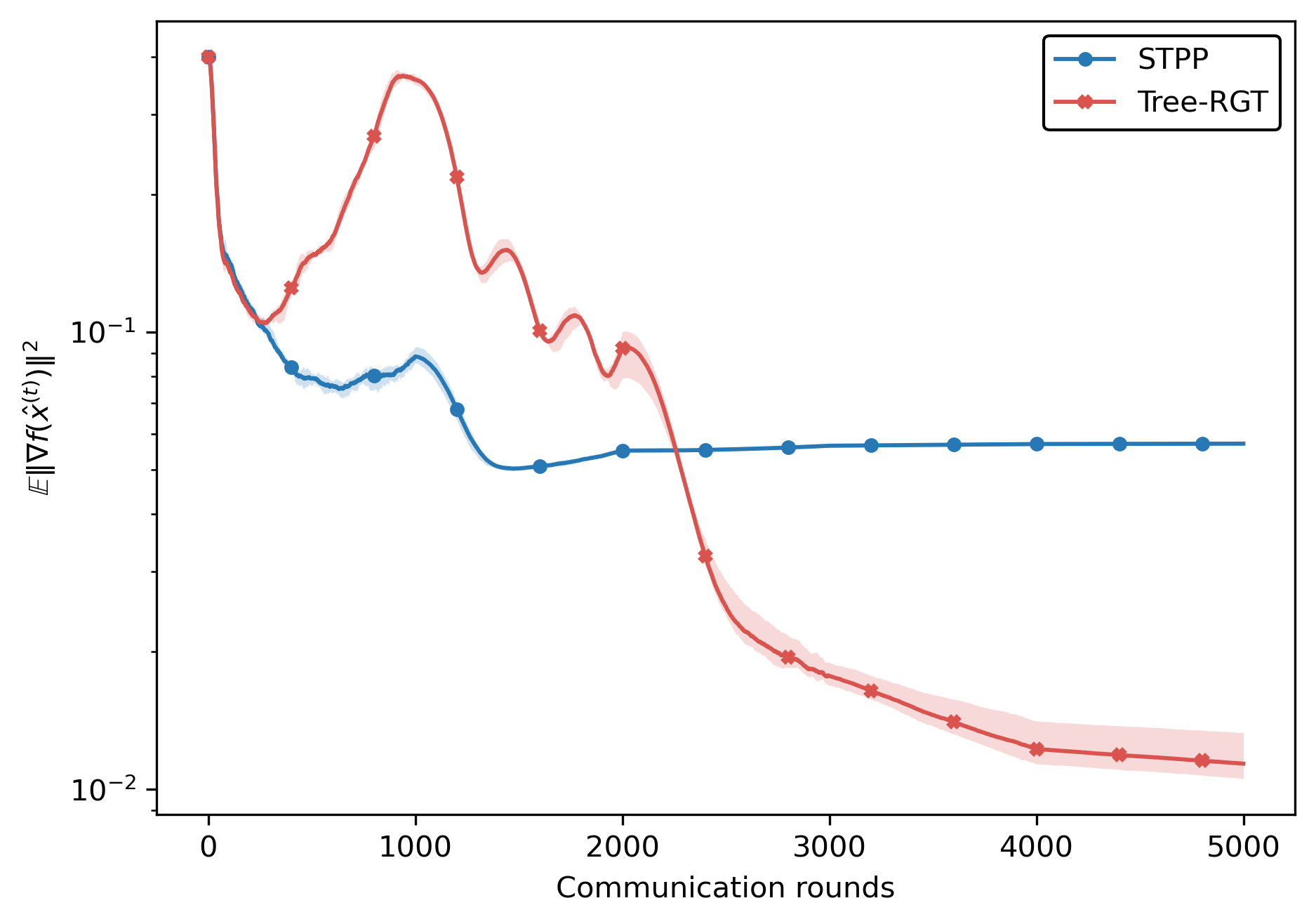}
    }
    \caption{Stress-test comparison of STPP and Tree-RGT on the lazy ring
    under the aggressive stepsize schedule. Curves are means over five
    stochastic sampling seeds, and shaded bands show the pointwise
    minimum--maximum envelope.}
    \label{fig:stress-test-stpp-tree-rgt}
\end{figure}

Figure~\ref{fig:stress-test-stpp-tree-rgt} exposes a difference that is less
visible under the conservative schedule used in
Figure~\ref{fig:ncvx-logistic}.  For $n=128$, Tree-RGT attains a lower
stationarity measure after the initial transient.  The separation is more
pronounced for $n=256$: STPP plateaus, whereas Tree-RGT continues to improve
after the stepsize reductions and reaches a substantially lower value.  The
late-stage separation also persists across the minimum--maximum envelopes,
indicating that it is not driven by an individual seed.  The stress test
therefore indicates that Tree-RGT is more robust than STPP to an aggressive
stepsize schedule on large-diameter networks.

\FloatBarrier

\section{Conclusion}
\label{sec:conclusion}

We separated graph-invariant quantities from properties of a chosen mixing
matrix and showed that graph diameter gives a tight one-sided control of the
inverse spectral gap, but does not determine it. This makes the network class
and the admissible algorithm class part of any precise optimality statement.
Using the diameter-based minimax benchmark, we developed Tree-Routed Gradient
Tracking (Tree-RGT), a root-coordinated distributed method that pipelines
model dissemination and gradient aggregation on a shortest-path tree. It uses
one round of one-hop communication per update, requires no inner consensus or
multi-gossip. For smooth stochastic nonconvex optimization, it achieves
uniformly optimal network dependence over $\mathsf{G}_{n,\hat D}$ and reaches
centralized stochastic scaling after $\cO(nD_{\cG}^{2})$ transient iterations.

A general fixed-mixing minimax theory remains open. It requires informative,
nonempty graph--matrix pair constraints and an algorithm class that specifies
how the designated matrix may be used. Extending topology-aware routing to
broader network models and studying the trade-offs among tree depth, setup
cost, and communication load are natural directions for future work.

\appendix

\section{Proofs for Section~\ref{sec:spec}}
\label{app:spectral-gap-analysis}

\subsection{Proof of Theorem \ref{thm:matrix-dependent-gap}}
\label{pf:thm:matrix-dependent-gap}

    Since $\bI$ and $\bW$ are both nonnegative, symmetric, positive semidefinite, and doubly stochastic, their convex combination $\bW^{(\delta)}$ has the same four properties.
    Let the eigenvalues of $\bW$ be
    \[
        1=\mu_1>\mu_2\ge\cdots\ge\mu_n\ge 0,\text{ and }
        \lambda_{\bW}=\mu_2.
    \]
    The eigenvalues of $\bW^{(\delta)}$ are $1$ and
    \[
        1-\delta+\delta\mu_i,
        \qquad i=2,\ldots,n.
    \]
    They are nonnegative, so
    \[
        \lambda_{\bW^{(\delta)}}
        =
        1-\delta+\delta\mu_2
        =
        1-\delta\prt{1-\lambda_{\bW}}.
    \]
    This proves the two gap identities. For every $i\ne j$,
    \[
        \brk{\bW^{(\delta)}}_{ij}
        = 
        \delta\brk{\bW}_{ij},
    \]
    which means that $\bW^{(\delta)}$ is a mixing matrix on the same graph
    $\cG$.

\subsection{Proof of Theorem \ref{l:spectral}}
\label{pf:l:spectral}

If $D_{\cG}=1$, the claim is immediate. Suppose $D_{\cG}\ge2$.
Since $\bW$ is doubly stochastic,
$\bW\bJ=\bJ\bW=\bJ$, and hence
\[
    \bW^k-\bJ=\prt{\bW-\bJ}^k,
    \qquad
    \norm{\bW^k-\bJ}_2\le\lambda_{\bW}^k.
\]
Choose two nodes $i$ and $j$ at distance $D_{\cG}$ in $\cG$. Since
$\cG$ is the support graph of $\bW$, no walk of length $D_{\cG}-1$
connects $j$ to $i$, so
\[
    \brk{\bW^{D_{\cG}-1}}_{ij}=0.
\]
Since every matrix entry is bounded in magnitude by the spectral norm,
\[
    \lambda_{\bW}^{D_{\cG}-1}
    \ge
    \norm{\bW^{D_{\cG}-1}-\bJ}_2
    \ge
    \left|\brk{\bW^{D_{\cG}-1}-\bJ}_{ij}\right|
    =\frac1n.
\]
Thus $\lambda_{\bW}>0$, and taking logarithms gives
\[
    \prt{D_{\cG}-1}\prt{-\log\lambda_{\bW}}\le\log n.
\]
Finally, $1-\lambda_{\bW}\le-\log\lambda_{\bW}$ for
$\lambda_{\bW}\in(0,1)$, which yields
\[
    1-\lambda_{\bW}\le\frac{\log n}{D_{\cG}-1}.
\]
Taking reciprocals proves the result.

\subsection{Proof of Lemma \ref{l:expander}}
\label{pf:l:expander}

By \cite[Definition~2.2 and Theorem~2.4]{hoory2006expander}, the spectral
gap $d-\mu_2(\bA_{\mathrm{cde}})$ is bounded away from zero by an
independent constant, which gives the stated bound for
$\mu_2(\bW_{\mathrm{cde}})$.  Applying the ball-growth argument given in
\cite[immediately following Lemma~2.5]{hoory2006expander} to the uniform
edge expansion gives $D_{\cG_{\mathrm{cde}}}=\cO(\log n)$.  The matching
$\Omega(\log_{d-1}n)$ lower bound for fixed $d$ is stated in
\cite[immediately before Corollary~5.4]{hoory2006expander}. Since $d\ge3$ is fixed,
$\log_{d-1}n=\log n/\log(d-1)=\Theta(\log n)$, so the upper and lower bounds
together give $D_{\cG_{\mathrm{cde}}}=\Theta(\log n)$.

\subsection{Proof of Lemma \ref{l:ring}}
\label{pf:l:ring}
The matrix $\bW_{\mathrm{r}}$ is symmetric and row-stochastic by construction, hence it is doubly stochastic. Let $\omega:=e^{2\pi i/n}$ and define $v_k\in\compls^n$ by
\[
    \brk{v_k}_j=\omega^{kj},
    \qquad
    k=0,\ldots,n-1.
\]
Then
\[
\begin{aligned}
\brk{\bW_{\mathrm{r}}v_k}_j
&=
\frac{1}{2}\omega^{kj}
+
\frac{1}{4}\omega^{k(j-1)}
+
\frac{1}{4}\omega^{k(j+1)}  \\
&=
\prt{
\frac{1}{2}
+
\frac{\omega^{-k}+\omega^k}{4}
}\omega^{kj}
=
\frac{1+\cos\prt{2\pi k/n}}{2}\brk{v_k}_j
=
\cos^2\prt{\frac{\pi k}{n}}\brk{v_k}_j.
\end{aligned}
\]
Thus, the eigenvalues of $\bW_{\mathrm{r}}$ are $\cos^2(\pi k/n)$, $k=0,\ldots,n-1$ and hence $\bW_{\mathrm{r}}$ is positive semidefinite. The eigenvalue associated with $k=0$ is $1$, and on the orthogonal complement of $\mone$ the largest eigenvalue in absolute value is $\cos^2(\pi/n)$. Since $\bW_{\mathrm{r}}$ is symmetric,
\[
    \lambda_{\bW_{\mathrm{r}}}
    =
    \norm{\bW_{\mathrm{r}}-\bJ}_2
    =
    \cos^2\prt{\frac{\pi}{n}}.
\]
Therefore
\[
    \tau_{\bW_{\mathrm{r}}}
    =
    \frac{1}{1-\cos^2\prt{\pi/n}}
    =
    \frac{1}{\sin^2\prt{\pi/n}}
    =
    \Theta(n^2),
\]
where the last equality follows from $\sin(\pi/n)=\Theta(1/n)$ derived by Taylor expansion.

\subsection{Proof of Lemma \ref{l:exponential}}
\label{pf:l:exponential}
For every integer $q$, the matrix $\bP^q$ is a permutation matrix and
$(\bP^q)^\T=\bP^{-q}$.  Hence each summand in the definition of
$\bW_{\mathrm{ex}}$ is nonnegative and symmetric, with every row and column
sum equal to $2$.  It follows that $\bW_{\mathrm{ex}}$ is nonnegative,
symmetric, and doubly stochastic. We label the nodes by
$\mathbb{Z}_{2^m}:=\crk{0,1,\ldots,2^m-1}$, where arithmetic is understood
modulo $2^m$. The support graph then contains an edge between nodes $i$ and
$j$ whenever
\[
    j-i\equiv\pm2^r\pmod{2^m}
\]
for some $r=0,\ldots,m-1$.

We first compute the diameter. For $h\ge1$ and
$d\in\mathbb{Z}_{2^h}$, define
\[
    \ell_h(d)
    :=
    \min\left\{
        t\in\mathbb{Z}_{\ge0}:
        d\equiv\sum_{q=1}^{t}\sigma_q2^{r_q}\pmod{2^h},\
        \sigma_q\in\{-1,1\},\
        r_q\in\crk{0,\ldots,h-1}
    \right\}.
\]
Each term in the sum corresponds to one edge traversal. Hence
$\ell_m(d)$ is the distance from node $0$ to node $d$, and translation
invariance gives
\[
    D_{\cG_{\mathrm{ex}}}
    =
    \max_{d\in\mathbb{Z}_{2^m}}\ell_m(d).
\]

For $h\ge2$, pairing all terms of exponent zero in a minimum representation
gives the recurrence
\[
    \ell_h(d)
    =
    \begin{cases}
        \ell_{h-1}(d/2), & d\ \text{even},\\
        1+\displaystyle\min_{\sigma\in\{-1,1\}}
        \ell_{h-1}\prt{(d-\sigma)/2}, & d\ \text{odd},
    \end{cases}
\]
where the arguments on the right are taken modulo $2^{h-1}$.  Indeed, when
$d$ is even, all exponent-zero terms can be canceled in opposite-sign pairs
or combined in equal-sign pairs; when $d$ is odd, exactly one term
$\sigma\in\{-1,1\}$ remains.  Dividing the other terms by $2$ proves one
direction of the recurrence, and multiplying by $2$ and restoring $\sigma$
proves the converse.

Let $L_h:=\max_d\ell_h(d)$.  For $h\ge3$ and odd $d$, choose $\sigma$ so
that $d-\sigma$ is divisible by $4$.  The recurrence then gives
\[
    L_h\le\max\{L_{h-1},1+L_{h-2}\}.
\]
Since $L_1=L_2=1$, induction yields $L_h\le\ceil{h/2}$.  For the reverse
inequality, let $a_h\in\mathbb{Z}_{2^h}$ be defined, for $h\ge1$, by
\[
    a_h
    \equiv
    \sum_{j=0}^{\ceil{h/2}-1}2^{2j}
    \pmod{2^h}.
\]
For $h\ge3$, the identities $(a_h-1)/2\equiv2a_{h-2}$ and
$(a_h+1)/2\equiv-a_{h-1}$ modulo $2^{h-1}$, together with
$\ell_h(-d)=\ell_h(d)$, give
\[
    \ell_h(a_h)
    =
    1+\min\{\ell_{h-2}(a_{h-2}),\ell_{h-1}(a_{h-1})\}
    =
    \ceil{\frac{h}{2}}
\]
by induction from $h=1,2$.  Therefore
\[
    D_{\cG_{\mathrm{ex}}}
    =
    L_m
    =
    \ceil{\frac{m}{2}}
    =
    \Theta(\log n).
\]

It remains to estimate $\lambda_{\bW_{\mathrm{ex}}}$.  Let
$\omega:=e^{2\pi i/n}$ and define the Fourier vectors by
$\brk{v_k}_j:=n^{-1/2}\omega^{-kj}$.  Since
$\bP^qv_k=\omega^{kq}v_k$, they diagonalize $\bW_{\mathrm{ex}}$, with
\[
    \theta_k
    =
    \frac{1}{2m}\sum_{r=0}^{m-1}
    \prt{\omega^{k2^r}+\omega^{-k2^r}}
    =
    \frac{1}{m}\sum_{r=0}^{m-1}
    \cos\prt{\frac{2\pi k2^r}{n}}.
\]
Because the $v_k$ form an orthonormal basis and $\theta_0=1$, we have
$\lambda_{\bW_{\mathrm{ex}}}=\max_{1\le k<n}|\theta_k|$.

Set $\eta_k:=m^{-1}\sum_{r=0}^{m-1}\omega^{k2^r}$, so that
$|\theta_k|\le|\eta_k|$.  Write $k=2^su$, where $u$ is odd and $0\le s\le m-1$. If $s\ge1$,
consider the two summands in $\eta_k$ corresponding to
$r=m-s-1$ and $r=m-s$. They satisfy
\[
    \omega^{k2^{m-s-1}}=e^{\pi i u}=-1,
    \qquad
    \omega^{k2^{m-s}}=e^{2\pi i u}=1,
\]
and hence cancel.  If $s=0$,
the terms indexed by $r=m-1,m-2$ are $-1$ and $\pm i$, whose sum has
modulus $\sqrt{2}$.  The triangle inequality therefore gives
\[
    |\eta_k|
    \le
    \begin{cases}
        (m-2)/m, & s\ge1\\
        (m-2+\sqrt{2})/m, & s=0
    \end{cases}
    \le
    1-\frac{2-\sqrt{2}}{m}.
\]
For $k=2^{m-1}$, the term with $r=0$ equals $-1$ and all the others equal
$1$, so $\theta_k=1-2/m$.  Consequently,
\[
    1-\frac{2}{m}
    \le
    \lambda_{\bW_{\mathrm{ex}}}
    \le
    1-\frac{2-\sqrt{2}}{m},
    \qquad
    \frac{m}{2}
    \le
    \tau_{\bW_{\mathrm{ex}}}
    \le
    \frac{m}{2-\sqrt{2}}.
\]
Therefore
\[
    \tau_{\bW_{\mathrm{ex}}}
    =
    \Theta(m)
    =
    \Theta(\log n)
    =
    \Theta(D_{\cG_{\mathrm{ex}}}).
\]

\subsection{Proof of Lemma \ref{l:double_star}}
\label{pf:l:double_star}
The matrix $\bW_{\mathrm{ds}}$ is symmetric by construction. Each leaf has self-weight $b=(m+1)/(m+2)$ and one incident edge of weight $a=1/(m+2)$. Each center has self-weight $a$ and is incident to $m$ leaves and the other center, with all these $m+1$ graph edges having weight $a$. Hence every row sums to one, and symmetry implies that $\bW_{\mathrm{ds}}$ is doubly stochastic. The largest distance is attained between a leaf in $\cL_1$ and a leaf in $\cL_2$, so $D_{\cG_{\mathrm{ds}}}=3$.

It remains to compute $\lambda_{\bW_{\mathrm{ds}}}$. 
We first identify the eigenvectors associated with leaf differences. Define
\[
\begin{aligned}
\operatorname{supp}(\bz) & := \crk{i\in\cN: \brk{\bz}_i \ne 0}, \\
\cU_1
&:=
\crk{
\bz\in\reals^n:
\operatorname{supp}(\bz)\subseteq \cL_1,\ 
\sum_{\ell\in\cL_1} z_\ell=0
},\\
\cU_2
&:=
\crk{
\bz\in\reals^n:
\operatorname{supp}(\bz)\subseteq \cL_2,\ 
\sum_{\ell\in\cL_2} z_\ell=0
}.
\end{aligned}
\]
Both subspaces have dimension $m-1$. For any $\bz\in\cU_1$, each leaf $\ell\in\cL_1$ satisfies
\[
    \brk{\bW_{\mathrm{ds}}\bz}_\ell
    =
    bz_\ell + a z_{c_1}
    =
    bz_\ell,
\]
and the center coordinate satisfies
\[
    \brk{\bW_{\mathrm{ds}}\bz}_{c_1}
    =
    a\sum_{\ell\in\cL_1} z_\ell + a z_{c_2}
    =
    0.
\]
All remaining coordinates are also zero. Hence, $\bW_{\mathrm{ds}}\bz=b\bz$ for every $\bz\in\cU_1$. The same argument applies to $\cU_2$. Therefore, $\bW_{\mathrm{ds}}$ has the eigenvalue $b$ with multiplicity at least $2(m-1)$, carried by the disjoint subspaces $\cU_1$ and $\cU_2$.

We next compute all eigenvalues different from $b$. Let $(\lambda,\bz)$ be an eigenpair with $\lambda\neq b$. For any two leaves $\ell,\ell'\in\cL_1$, the leaf equations are
\[
    \lambda \brk{\bz}_\ell = b\brk{\bz}_\ell + a\brk{\bz}_{c_1},
    \qquad
    \lambda \brk{\bz}_{\ell'} = b\brk{\bz}_{\ell'} + a\brk{\bz}_{c_1}.
\]
Subtracting them gives
\[
    (\lambda-b)\prt{\brk{\bz}_\ell-\brk{\bz}_{\ell'}}=0.
\]
Since $\lambda\neq b$, all entries on $\cL_1$ are equal. The same argument shows that all entries on $\cL_2$ are equal. Hence every eigenvector associated with an eigenvalue $\lambda\neq b$ can be described by four scalars $x_1,y_1,y_2,x_2$, corresponding respectively to the common value on $\cL_1$, the value at $c_1$, the value at $c_2$, and the common value on $\cL_2$. The eigenvalue equations reduce to
\[
\begin{aligned}
\lambda x_1 &= b x_1 + a y_1, \\
\lambda y_1 &= am x_1 + a y_1 + a y_2, \\
\lambda y_2 &= a y_1 + a y_2 + am x_2, \\
\lambda x_2 &= b x_2 + a y_2 .
\end{aligned}
\]
We now solve this four-variable system by a change of variables. Define the sum variables and difference variables
\[
    s_x:=x_1+x_2,\qquad s_y:=y_1+y_2,
    \qquad
    d_x:=x_1-x_2,\qquad d_y:=y_1-y_2.
\]
This change of variables is invertible, since
\[
    x_1=\frac{s_x+d_x}{2},\quad
    x_2=\frac{s_x-d_x}{2},\quad
    y_1=\frac{s_y+d_y}{2},\quad
    y_2=\frac{s_y-d_y}{2}.
\]
Thus no eigenvector is lost by working with the variables $(s_x,s_y,d_x,d_y)$.

Adding the first and fourth equations, and adding the second and third equations, gives
\[
    \lambda s_x = bs_x + as_y,
    \qquad
    \lambda s_y = ams_x + 2as_y.
\]
Therefore, the sum variables satisfy the $2\times 2$ eigenvalue system
\[
    \lambda
    \begin{bmatrix}
    s_x\\ s_y
    \end{bmatrix}
    =
    \begin{bmatrix}
    b & a \\
    am & 2a
    \end{bmatrix}
    \begin{bmatrix}
    s_x\\ s_y
    \end{bmatrix}.
\]
The corresponding characteristic polynomial is
\[
    \lambda^2-(b+2a)\lambda+(2ab-a^2m)=0.
\]
Using $a=1/(m+2)$ and $b=(m+1)/(m+2)$, the two roots are $1$ and $a$.

Similarly, subtracting the fourth equation from the first, and subtracting the third equation from the second, gives
\[
    \lambda d_x = bd_x + ad_y,
    \qquad
    \lambda d_y = amd_x.
\]
Therefore, the difference variables satisfy
\[
    \lambda
    \begin{bmatrix}
    d_x\\ d_y
    \end{bmatrix}
    =
    \begin{bmatrix}
    b & a \\
    am & 0
    \end{bmatrix}
    \begin{bmatrix}
    d_x\\ d_y
    \end{bmatrix}.
\]
The corresponding characteristic polynomial is
\[
    \lambda^2-b\lambda-a^2m=0,
\]
and therefore its two eigenvalues are
\[
    \lambda_{+} = \frac{m+1+\sqrt{m^2+6m+1}}{2(m+2)},\text{ and } 
    \lambda_{-}
    =
    \frac{m+1-\sqrt{m^2+6m+1}}{2(m+2)}.
\]
The spectrum of $\bW_{\mathrm{ds}}$ therefore consists of the eigenvalue $b$, with multiplicity $2(m-1)$, together with the four eigenvalues $1$, $a$, $\lambda_{+}$, and $\lambda_{-}$. Since $1>\lambda_+>b>a$ and $\lambda_+\ge |\lambda_-|$, the largest eigenvalue in modulus on the orthogonal complement of $\mone$ is
\[
    \lambda_{\bW_{\mathrm{ds}}}
    =
    \lambda_+
    =
    \frac{m+1+\sqrt{m^2+6m+1}}{2(m+2)}.
\]
Finally,
\[
\begin{aligned}
\tau_{\bW_{\mathrm{ds}}}
&=
\frac{1}{1-\lambda_{\bW_{\mathrm{ds}}}}
=
\frac{(m+2)\prt{m+3+\sqrt{m^2+6m+1}}}{4}
=
\Theta(m^2)
=
\Theta(n^2).
\end{aligned}
\]

We now analyze the second weighting $\widetilde{\bW}_{\mathrm{ds}}$.
Each leaf row sums to $\beta+\alpha=1$, while each center row sums to
\[
    m\alpha+\frac14+\frac14=1.
\]
Thus $\widetilde{\bW}_{\mathrm{ds}}$ is symmetric and doubly stochastic, and all of its graph edges have positive weight. The two leaf-difference subspaces $\cU_1$ and $\cU_2$ considered above now carry the eigenvalue $\beta$, with total multiplicity $2(m-1)$.

On the remaining four-dimensional subspace, the same sum--difference decomposition gives
\[
    \lambda
    \begin{bmatrix}
        s_x\\s_y
    \end{bmatrix}
    =
    \begin{bmatrix}
        \beta & \alpha\\
        m\alpha & \frac12
    \end{bmatrix}
    \begin{bmatrix}
        s_x\\s_y
    \end{bmatrix},
    \qquad
    \lambda
    \begin{bmatrix}
        d_x\\d_y
    \end{bmatrix}
    =
    \begin{bmatrix}
        \beta & \alpha\\
        m\alpha & 0
    \end{bmatrix}
    \begin{bmatrix}
        d_x\\d_y
    \end{bmatrix}.
\]
The sum matrix has eigenvalues
\[
    1,
    \qquad
    \frac{m-1}{2m},
\]
while the difference matrix has eigenvalues
\[
    \rho_{\pm}
    =
    \frac{1-\frac{1}{2m}\pm\sqrt{1+\frac{1}{4m^2}}}{2}.
\]
Since $\rho_+>\beta$, $\rho_+>-\rho_-$, and $\rho_+>(m-1)/(2m)$, the largest eigenvalue magnitude on the orthogonal complement of $\mone$ is
\[
    \lambda_{\widetilde{\bW}_{\mathrm{ds}}}
    =
    \rho_+
    =
    \frac{1-\frac{1}{2m}+\sqrt{1+\frac{1}{4m^2}}}{2}.
\]
Rationalizing its gap yields
\[
\begin{aligned}
    \tau_{\widetilde{\bW}_{\mathrm{ds}}}
    &=
    \frac{1}{1-\rho_+}\\
    &=
    2m+1+\sqrt{4m^2+1}\\
    &=
    \Theta(m)
    =
    \Theta(n).
\end{aligned}
\]

\subsection{Proof of Theorem \ref{thm:arbitrary-diameter-gap-scaling}}
\label{pf:thm:arbitrary-diameter-gap-scaling}

Fix $p>0$ and $D_0\ge1$.  Let
$\{\cG_{\mathrm{cde},k}\}_{k\ge1}$ be an infinite constant-degree expander
family as in Lemma \ref{l:expander}, where the common degree $d\ge3$ is
fixed.  Let $n_k$ be the number of nodes in $\cG_{\mathrm{cde},k}$, indexed
so that $n_k\to\infty$, and let $\bA_{\mathrm{cde},k}$ be its adjacency
matrix.  Define
\[
    \bW_{\mathrm{cde},k}
    :=
    \frac{\bA_{\mathrm{cde},k}}{d},
    \qquad
    \widehat{\bW}_{\mathrm{cde},k}
    :=
    \frac12\prt{\bI+\bW_{\mathrm{cde},k}},
    \qquad
    D_k
    :=
    D_{\cG_{\mathrm{cde},k}}.
\]
All eigenvalues of $\bW_{\mathrm{cde},k}$ lie in $[-1,1]$, so
$\widehat{\bW}_{\mathrm{cde},k}$ is positive semidefinite.  It is also a
mixing matrix on $\cG_{\mathrm{cde},k}$.  Lemma \ref{l:expander} gives a
constant $c>0$, independent of $k$, such that
\[
    1
    \le
    \tau_{\widehat{\bW}_{\mathrm{cde},k}}
    =
    \frac{2}{1-\mu_2(\bW_{\mathrm{cde},k})}
    \le
    \frac{2}{c},
    \qquad
    D_k
    =
    \Theta(\log n_k)
    \to
    \infty.
\]

Choose $k$ sufficiently large that
$D_k\ge D_0$ and $D_k^p\ge 2/c$.  For this fixed $k$, define
\[
    \cG^*
    :=
    \cG_{\mathrm{cde},k},
    \qquad
    \delta
    :=
    \frac{\tau_{\widehat{\bW}_{\mathrm{cde},k}}}{D_k^p},
    \qquad
    \bW^*
    :=
    (1-\delta)\bI
    +
    \delta\widehat{\bW}_{\mathrm{cde},k}.
\]
The preceding bounds imply $\delta\in(0,1]$.  Hence Theorem
\ref{thm:matrix-dependent-gap} shows that $\bW^*$ is a mixing matrix on
$\cG^*$ and
\[
    \tau_{\bW^*}
    =
    \frac{\tau_{\widehat{\bW}_{\mathrm{cde},k}}}{\delta}
    =
    D_k^p
    =
    D_{\cG^*}^p.
\]
By construction, $D_{\cG^*}=D_k\ge D_0$, which completes the proof.

\section{Proofs for Section~\ref{sec:lower-bounds}}
\label{app:lower-complexity-proofs}

\subsection{Proof of Theorem~\ref{thm:weighted-path-inverse-gap}}
\label{pf:thm:weighted-path-inverse-gap}

Let $\cG_{\mathrm{path}}^{(n)}:=\operatorname{Path}_n$.  The construction
underlying \cite[Corollary~1]{lu2021optimal} pairs this graph with the
reflecting random-walk matrix whose nonzero entries are
\[
    \brk{\bW_{\mathrm{rw}}^{(n)}}_{i,i+1}
    =
    \brk{\bW_{\mathrm{rw}}^{(n)}}_{i+1,i}
    =
    \frac12
    \quad (1\le i<n),
    \qquad
    \brk{\bW_{\mathrm{rw}}^{(n)}}_{11}
    =
    \brk{\bW_{\mathrm{rw}}^{(n)}}_{nn}
    =
    \frac12.
\]
It is a mixing matrix on $\cG_{\mathrm{path}}^{(n)}$.  For the inverse-gap
budget in Theorem~\ref{thm:weighted-path-inverse-gap}, take
$\tau:=\tau_{\bW_{\mathrm{rw}}^{(n)}}$.  The standard cosine diagonalization
gives
\[
    D_{\cG_{\mathrm{path}}^{(n)}}=n-1,
    \qquad
    \lambda_{\bW_{\mathrm{rw}}^{(n)}}=\cos\prt{\frac{\pi}{n}},
    \qquad
    \tau
    =
    \frac{1}{1-\cos(\pi/n)}
    =
    \Theta\prt{(n-1)^2}.
\]

Every connected $n$-node graph of diameter $n-1$ is a relabeling of
$\operatorname{Path}_n$: a diametral path contains all $n$ nodes and cannot
have a chord.  Relabeling $\bW_{\mathrm{rw}}^{(n)}$ in the same way shows that
the graph components represented in $\widehat{\mathsf{P}}_n$ form exactly
$\mathsf{G}_{n,n-1}$.

Neither $\mathsf{A}_B$ nor $T_\varepsilon(A,f,O,\cG)$ depends on the matrix
paired with $\cG$.  Therefore
\[
    \inf_{A\in\mathsf{A}_B}
    \sup_{\substack{
        (\cG,\bW)\in\widehat{\mathsf{P}}_n,\\
        (\{f_i\},O)\in\mathsf{I}_n(\Delta,L,\sigma^2)
    }}
    T_\varepsilon(A,f,O,\cG)
    =
    \mathsf{T}_\varepsilon(\mathsf{G}_{n,n-1},B).
\]
More explicitly, apply \cite[Theorem~1]{lu2021optimal} with $\hat D=n-1$,
and let $\widehat{\cG}_{\mathrm{hard}}$ and the hard optimization--oracle
instance be those supplied by its construction.  Relabeling
$\bW_{\mathrm{rw}}^{(n)}$ in the same way as the path gives a matrix
$\bW_{\mathrm{hard}}$ such that
$(\widehat{\cG}_{\mathrm{hard}},\bW_{\mathrm{hard}})
\in\widehat{\mathsf{P}}_n$ and the same pair and instance are hard for every
$A\in\mathsf{A}_B$.  Finally, $n-1=\Theta(\sqrt{\tau})$ proves
\eqref{eq:lu-path-class-lower-bound}.

\subsection{Proof of Corollary~\ref{cor:reweighted-path-inverse-gap}}
\label{pf:cor:reweighted-path-inverse-gap}

Use the hard graph, matrix, and optimization--oracle instance constructed
above, and define
\[
    \bW_{\mathrm{harder}}
    :=
    \prt{1-\frac1n}\bI+\frac1n\bW_{\mathrm{hard}}.
\]
As a convex combination of $\bI$ and $\bW_{\mathrm{hard}}$, this matrix is
nonnegative, symmetric, and doubly stochastic.  It also preserves every
positive off-diagonal entry of $\bW_{\mathrm{hard}}$, and hence is a mixing
matrix on the same graph.
Because $\bW_{\mathrm{hard}}$ is permutation-similar to
$\bW_{\mathrm{rw}}^{(n)}$, its eigenvalues are
$\nu_k=\cos(k\pi/n)$ for $k=0,\ldots,n-1$.  The eigenvalues of
$\bW_{\mathrm{harder}}$ are therefore
\[
    \nu_k'
    =
    1-\frac{1-\cos(k\pi/n)}{n},
    \qquad k=0,\ldots,n-1.
\]
They are all positive for $n\ge2$, so the second-largest eigenvalue modulus is
$\nu_1'$.  Consequently,
\[
    1-\lambda_{\bW_{\mathrm{harder}}}
    =
    \frac{1-\cos(\pi/n)}{n},
    \qquad
    \tau_{\bW_{\mathrm{harder}}}
    =
    n\tau
    =
    \bar\tau
    =
    \Theta\prt{(n-1)^3}.
\]
Thus
$(\widehat{\cG}_{\mathrm{hard}},\bW_{\mathrm{harder}})
\in\overline{\mathsf{P}}_n$.  Since neither $\mathsf{A}_B$ nor
$T_\varepsilon(A,f,O,\cG)$ uses the designated matrix, replacing
$\bW_{\mathrm{hard}}$ by $\bW_{\mathrm{harder}}$ leaves the hard graph and
optimization--oracle instance unchanged.  Finally,
$n-1=\Theta(\bar\tau^{1/3})$ converts the diameter term in
\cite[Theorem~1]{lu2021optimal} into the second term of
\eqref{eq:lu-reweighted-path-class-lower-bound}.

\section{Proofs for Section~\ref{sec:tree-rgt}}
\label{app:tree-routing-matrix-proofs}

\subsection{Proof of Theorem \ref{thm:bfs-tree-optimal}}
\label{pf:thm:bfs-tree-optimal}

The breadth-first search discovers the root at level zero. Suppose that all
nodes at graph distance at most $\ell$ from the root have been discovered
with their correct depths. Every node at graph distance $\ell+1$ has a
neighbor at graph distance $\ell$, and hence receives a discovery message
at level $\ell$. Conversely, any node first reached at that level has a path
of length $\ell+1$ from the root and cannot have a shorter path, since it
was not discovered earlier. Therefore, every newly discovered node $i$
satisfies $h_i=\operatorname{dist}_{\cG}(1,i)$. Induction proves
\eqref{eq:bfs-pointwise-depth}.

For any other spanning tree $\cT'$ of $\cG$ rooted at node~$1$, its unique
root-to-$i$ path is also a path in $\cG$. Hence
\[
    \operatorname{dist}_{\cT'}(1,i)
    \ge
    \operatorname{dist}_{\cG}(1,i)
    =
    h_i
    \qquad\text{for every }i\in\cN.
\]
Taking the maximum over $i$ proves the minimization identity in
\eqref{eq:bfs-optimal-D}, and averaging over $i$ proves
\eqref{eq:bfs-optimal-Davg}. Since $D_{\cG}$ maximizes the distance over
all pairs of nodes, whereas $D$ maximizes only over pairs involving the
root, we have $D\le D_{\cG}$. Conversely, for any $i,j\in\cN$, the triangle
inequality and \eqref{eq:bfs-pointwise-depth} give
\[
    \operatorname{dist}_{\cG}(i,j)
    \le
    \operatorname{dist}_{\cG}(i,1)
    +
    \operatorname{dist}_{\cG}(1,j)
    =h_i+h_j
    \le 2D.
\]
Maximizing over $i,j$ yields $D_{\cG}\le2D$ and completes
\eqref{eq:bfs-optimal-D}.

\subsection{Proof of Lemma \ref{lem:Rk_norm}}
\label{pf:lem:Rk_norm}
We first prove the closed forms in \eqref{eq:RC-power-entrywise}. The identity
for $\bR^0=\bI$ is immediate. If it holds at $k$, then the unique
parent entry in row $i$ of $\bR$ gives
\[
    \brk{\bR^{k+1}}_{ij}
    =
    \sum_{\ell\in\cN}\brk{\bR}_{i\ell}\brk{\bR^k}_{\ell j}
    =
    \brk{\bR^k}_{p(i),j}
    =
    \mathbbm{1}_{\{p^{k+1}(i)=j\}}.
\]
Induction proves the first identity, and the second follows from
$\bC^k=(\bR^k)^\T$.

Since $p(1)=1$, a node reaches the root after repeated application of the
parent map precisely when $k\ge h_i$; that is,
$p^k(i)=1$ if and only if $h_i\le k$. Let $\bfe_j$ denote the $j$-th standard
basis vector. Equation~\eqref{eq:RC-power-entrywise} then gives that the $i$-th row of $\bR^k-\mone\bpi^\T$ satisfies:
\[
    \brk{\bR^k-\mone\bpi^\T}_{i,:}
    =
    \begin{cases}
        \mathbf{0}^\T, & h_i\le k,\\
        \bfe_{p^k(i)}^\T-\bpi^\T, & h_i>k.
    \end{cases}
\]
Each nonzero row has squared Euclidean norm $2$, and there are exactly
$n-n_k$ such rows. Hence
\begin{equation}
    \label{eq:Rk-residual-frobenius}
    \norm{\bR^k-\mone\bpi^\T}_F^2
    =
    2\prt{n-n_k}.
\end{equation}
Finally,
$\bC^k-\bpi\mone^\T=(\bR^k-\mone\bpi^\T)^\T$, so the corresponding spectral
and Frobenius norms are equal. The inequality
$\norm{\cdot}_2\le\norm{\cdot}_F$ completes the proof.

\subsection{Proof of Corollary \ref{lem:Rk_d}}
\label{pf:lem:Rk_d}

Since $D=\max_{i\in\cN}h_i$, we have $n_k=n$ if and only if $k\ge D$.
The result follows directly from Lemma~\ref{lem:Rk_norm}.

\subsection{Proof of Theorem \ref{thm:convergence}}
\label{pf:thm:convergence}

The chosen stepsize satisfies the condition of Lemma~\ref{lem:smooth}.
Moreover,
\[
    50\gamma L\sigma^2
    \le
    50L\sigma^2\frac{\sqrt{\Delta_f}}{\sqrt{nTL\sigma^2}}
    =
    \frac{50\sqrt{\Delta_f L\sigma^2}}{\sqrt{nT}}.
\]
On the other hand,
\[
    \frac{1}{\gamma}
    =
    \max\crk{
        40nDL,
        \sqrt{\frac{nTL\sigma^2}{\Delta_f}}
    }
    \le
    40nDL+
    \sqrt{\frac{nTL\sigma^2}{\Delta_f}},
\]
and hence
\[
    \frac{10\Delta_f}{\gamma nT}
    \le
    \frac{400D\Delta_f L}{T}
    +
    \frac{10\sqrt{\Delta_f L\sigma^2}}{\sqrt{nT}}.
\]
Substituting these estimates into Lemma~\ref{lem:smooth} and collecting terms
proves the result.

\subsection{Proof of Corollary \ref{cor:tree-rgt-accuracy}}
\label{pf:cor:tree-rgt-accuracy}

Independence of the samples within each mini-batch and
Assumption~\ref{a.var} give
\[
    \expect\brk{g_{i,B}(x,\boldsymbol{\xi}_i)}=\nabla f_i(x),
    \qquad
    \expect\norm{
        g_{i,B}(x,\boldsymbol{\xi}_i)-\nabla f_i(x)
    }^2
    \le\frac{\sigma^2}{B}.
\]
Theorem~\ref{thm:convergence} therefore applies with $\sigma^2/B$ in place of
$\sigma^2$, which proves the trajectory bound. The iteration choice
\eqref{eq:tree-rgt-epsilon-trajectory} implies
\[
    \frac{60\sqrt{\Delta_f L\sigma^2}}{\sqrt{nBT}}
    \le \frac{\varepsilon^2}{2}
\]
and
\[
    \frac{400D\Delta_f L}{T}
    +\frac{50D_{\avg}H_0}{T}
    \le \frac{\varepsilon^2}{2}.
\]
Adding these two inequalities proves the accuracy guarantee.

\section{Proofs for Section~\ref{sec:convergence-analysis}}
\label{app:convergence-proofs}

\subsection{Auxiliary Inequalities}

We first collect three elementary tools used in the convergence proofs. The
first two control Frobenius norms of finite matrix sums and matrix products,
while the third aggregates a nonnegative finite-memory convolution over the
full time horizon.

\begin{lemma}[Finite-sum Frobenius bound]
    \label{lem:sum_matrix}
    Let $\bA_1,\ldots,\bA_m$ be matrices of the same size. Then
    \[
        \norm{\sum_{i=1}^{m}\bA_i}_F^2
        \le m \sum_{i=1}^{m} \norm{\bA_i}_F^2.
    \]
\end{lemma}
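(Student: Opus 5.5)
The plan is to reduce the inequality to Cauchy--Schwarz, applied entry by entry. Write $\bB:=\sum_{i=1}^m\bA_i$. The squared Frobenius norm is the sum of squared entries, so it is enough to prove the scalar version for each index pair $(k,\ell)$ and then add up. For a fixed entry,
\[
    \brk{\bB}_{k\ell}^2
    =\prt{\sum_{i=1}^m 1\cdot\brk{\bA_i}_{k\ell}}^2
    \le m\sum_{i=1}^m\brk{\bA_i}_{k\ell}^2 .
\]
This follows from Cauchy--Schwarz applied to the vectors $\mone_m$ and $(\brk{\bA_1}_{k\ell},\ldots,\brk{\bA_m}_{k\ell})$.

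Summing this bound over all $(k,\ell)$ and exchanging the two finite sums gives
\[
    \norm{\bB}_F^2\le m\sum_{i=1}^m\norm{\bA_i}_F^2,
\]
which is the claim.

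An alternative route would use convexity of $\norm{\cdot}_F^2$ (Jensen's inequality) on the average $\frac1m\sum_i\bA_i$, followed by multiplying through by $m^2$. This is equivalent, so I would use the entrywise argument because it is shorter to state.

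I do not expect any real obstacle here. The only point that needs care is writing the exchange of summation explicitly, so that the factor $m$ is visibly uniform over all entries.
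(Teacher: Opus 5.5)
Your proof is correct and rests on the same ingredient as the paper's proof: Cauchy--Schwarz against the all-ones vector in $\reals^m$. The paper applies it differently, using the triangle inequality $\|\sum_i \bA_i\|_F \le \sum_i \|\bA_i\|_F$ first and then Cauchy--Schwarz on the $m$ scalars $\|\bA_i\|_F$, which works for any norm; you apply it entrywise and sum, which avoids the triangle inequality but relies on the sum-of-squares structure of the Frobenius norm.
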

\begin{proof}
    The triangle inequality followed by the Cauchy--Schwarz inequality gives
    \[
        \norm{\sum_{i=1}^{m}\bA_i}_F^2
        \le \prt{\sum_{i=1}^{m}\norm{\bA_i}_F}^2
        \le m\sum_{i=1}^{m}\norm{\bA_i}_F^2.
    \]
\end{proof}

\begin{lemma}[Matrix-product Frobenius bound]
    \label{lem:matrix_norm}
    Let $\bA$ and $\bB$ be matrices of compatible dimensions. Then
    \[
        \norm{\bA\bB}_F \le \norm{\bA}_2\norm{\bB}_F.
    \]
\end{lemma}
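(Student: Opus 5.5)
The plan is to reduce the product bound to a column-by-column statement and apply the defining property of the spectral norm to each column. Write $\bB=[b_1,\ldots,b_q]$ in terms of its columns $b_k$. Then $\bA\bB=[\bA b_1,\ldots,\bA b_q]$, and the squared Frobenius norm is the sum of the squared Euclidean norms of the columns:
\[
    \norm{\bA\bB}_F^2=\sum_{k=1}^{q}\norm{\bA b_k}^2 .
\]

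Next, we use the operator-norm characterization of the spectral norm, $\norm{\bA}_2=\sup_{v\ne\mathbf{0}}\norm{\bA v}/\norm{v}$. This gives $\norm{\bA b_k}\le\norm{\bA}_2\norm{b_k}$ for every $k$, including the trivial case $b_k=\mathbf{0}$.

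Squaring and summing over $k$ then yields
\[
    \norm{\bA\bB}_F^2\le\norm{\bA}_2^2\sum_{k=1}^{q}\norm{b_k}^2=\norm{\bA}_2^2\norm{\bB}_F^2 .
\]
Taking square roots of both nonnegative sides completes the argument.

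No step presents a genuine obstacle. The only point requiring care is to use the operator-norm definition of $\norm{\cdot}_2$, which coincides with the largest singular value. This convention is consistent with how the paper uses $\norm{\cdot}_2$ earlier, for instance in $\lambda_{\bW}=\norm{\bW-\bJ}_2$.
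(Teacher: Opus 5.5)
Your proof is correct and is essentially the paper's argument: the paper observes that $\norm{\bA}_2^2\bI-\bA^\T\bA$ is positive semidefinite, so the congruence $\bB^\T\prt{\norm{\bA}_2^2\bI-\bA^\T\bA}\bB$ has nonnegative trace. Written out, that trace condition is exactly the sum over the columns $b_k$ of your inequalities $\norm{\bA b_k}^2\le\norm{\bA}_2^2\norm{b_k}^2$, so the two presentations differ only in packaging (trace of a PSD matrix versus an explicit column sum).
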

\begin{proof}
    By the definition of the spectral norm,
    $\norm{\bA}_2^2\bI-\bA^\T\bA$ is positive semidefinite. Therefore, its
    congruence transformation
    \[
        \norm{\bA}_2^2\bB^\T\bB
        -\bB^\T\bA^\T\bA\bB
        =\bB^\T\prt{\norm{\bA}_2^2\bI-\bA^\T\bA}\bB
    \]
    is also positive semidefinite and thus has a nonnegative trace. It follows
    that
    \[
        \norm{\bA\bB}_F^2
        =\operatorname{tr}\prt{\bB^\T\bA^\T\bA\bB}
        \le \norm{\bA}_2^2\operatorname{tr}\prt{\bB^\T\bB}
        =\norm{\bA}_2^2\norm{\bB}_F^2.
    \]
    Taking square roots proves the claim.
\end{proof}

\begin{lemma}[Finite-memory convolution bound]
    \label{lem:sum_help}
    Let $T\ge1$ and $D\ge1$ be integers, and let
    $\crk{a_t}_{t=0}^{T-1}$ and $\crk{b_i}_{i=0}^{D-1}$ be nonnegative
    sequences. Then
    \[
        \sum_{t=0}^{T-1}
        \sum_{m=\max\crk{0,t-D}}^{t-1} b_{t-m-1}a_m
        \le
        \sum_{i=0}^{D-1}b_i\sum_{t=0}^{T-1}a_t,
    \]
    where a sum over an empty index set is understood to be zero.
\end{lemma}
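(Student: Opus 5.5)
The plan is to exchange the order of summation and then use the nonnegativity of all terms to enlarge the index range. Substitute $i:=t-m-1$. For fixed $t$, the constraint $\max\{0,t-D\}\le m\le t-1$ becomes $0\le i\le \min\{t-1,D-1\}$, with $m=t-1-i\ge0$. The left-hand side then equals
\[
    \sum_{t=0}^{T-1}\sum_{i=0}^{\min\{t,D\}-1} b_i\,a_{t-1-i}.
\]
When $t=0$ the inner sum is empty and contributes zero, which is consistent with the stated convention.

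Next, I would swap the two sums over the finite index set $\{(t,i):0\le i\le D-1,\ i+1\le t\le T-1\}$:
\[
    \sum_{i=0}^{D-1} b_i\sum_{t=i+1}^{T-1} a_{t-1-i}
    =
    \sum_{i=0}^{D-1} b_i\sum_{s=0}^{T-2-i} a_s ,
\]
where $s:=t-1-i$. If $i\ge T-1$, the inner range is empty and the term is zero.

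Because $a_s\ge0$, each inner sum is at most $\sum_{s=0}^{T-1}a_s$. Because $b_i\ge0$, multiplying by $b_i$ preserves these inequalities. Summing over $i$ then gives the claim.

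There is no real obstacle. The only points needing care are getting the index bookkeeping right at the boundaries ($t<D$, $t=0$, and $D>T$), and confirming that the exchange of summations covers exactly the same set of pairs. Both are handled by checking the set description above.
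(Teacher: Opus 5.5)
Your proposal is correct and follows the paper's proof: the same substitution $i=t-m-1$, the same exchange of summation yielding $\sum_{i} b_i\sum_{s=0}^{T-2-i}a_s$, and the same use of nonnegativity to enlarge the inner range to $\{0,\ldots,T-1\}$. Your explicit description of the index set $\{(t,i):0\le i\le D-1,\ i+1\le t\le T-1\}$ makes the boundary cases ($t=0$, $D>T$) slightly more transparent than the paper's $\min\{D-1,T-2\}$ bookkeeping, but the argument is the same.
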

\begin{proof}
    When $T=1$, the inner sum on the left-hand side is empty, so the claim is
    immediate. Suppose $T\ge2$. Set $i=t-m-1$ and $s=m$, or equivalently
    $t=s+i+1$. Reordering the summation gives
    \[
    \begin{aligned}
        \sum_{t=0}^{T-1}
        \sum_{m=\max\crk{0,t-D}}^{t-1}
        b_{t-m-1}a_m
        &=\sum_{i=0}^{\min\crk{D-1,T-2}}b_i
        \sum_{s=0}^{T-i-2}a_s \\
        &\le \sum_{i=0}^{D-1}b_i\sum_{s=0}^{T-1}a_s,
    \end{aligned}
    \]
    where the last inequality follows from the nonnegativity of both
    sequences.
\end{proof}

\subsection{Proof of Lemma \ref{lem:Ak}}
\label{pf:lem:Ak}
For $1\le k\le D$, \eqref{eq:RC-power-entrywise} implies
\[
    \bpi^\T\bC^k
    =
    \sum_{\ell=0}^{k}\bfe_{1,\ell}^\T .
\]
Consequently,
\[
    \bpi^\T\bA_k
    =
    \bpi^\T\prt{\bC^k-\bC^{k-1}}
    =
    \bfe_{1,k}^\T .
\]
If $k>D$, Corollary \ref{lem:Rk_d} gives $\bC^k=\bC^{k-1}=\bpi\mone^\T$, and
therefore $\bA_k=\mathbf{0}$.

\subsection{Proof of Lemma \ref{lem:RC-interaction}}
\label{pf:lem:RC-interaction}
Let
\[
    \cJ_q
    :=
    \crk{j\in\ints:\ \max\crk{1,q-D}\le j\le \min\crk{q,D}} .
\]
It is enough to control the entries column by column. Fix a node $v$ and let
$\bfe_v$ be the corresponding standard basis vector. Let $h_v$ be the rooted
depth of $v$. Since $\bS=\bpi\bpi^\T$, the insertion of $\bS$ keeps only the
component of $\bA_k\bfe_v$ that has reached the root:
\begin{equation}
    \label{eq:RC-interaction-P-column}
    \bS\bA_k\bfe_v
    =
    \bpi\brk{\bpi^\T\bA_k\bfe_v}.
\end{equation}
By Lemma \ref{lem:Ak}, and also for $k=0$ by $\bA_0=\bI$, this scalar is nonzero
only when $k=h_v$:
\begin{equation}
    \label{eq:RC-interaction-root-scalar}
    \bpi^\T\bA_k\bfe_v
    =
    \begin{cases}
        1, & k=h_v,\\
        0, & k\ne h_v .
    \end{cases}
\end{equation}

Let $\cK_q:=\crk{q-j:\ j\in\cJ_q}$. The $v$-th column of the interaction matrix is
\[
    s_v
    :=
    \sum_{j\in\cJ_q}
    \bPi_{\bR}\bR^{j-1}\bS\bA_{q-j}\bfe_v
    =
    \sum_{k\in\cK_q}
    \prt{\bR^{q-k-1}-\mone\bpi^\T}
    \bpi\brk{\bpi^\T\bA_k\bfe_v}.
\]
Thus the sum has at most one nonzero term. If $h_v\notin\cK_q$, then
$s_v=\mathbf{0}$. If $h_v\in\cK_q$, then
\[
    s_v
    =
    \prt{\bR^{q-h_v-1}-\mone\bpi^\T}\bpi
    =
    \bR^{q-h_v-1}\bpi-\mone .
\]
The condition $h_v\in\cK_q$ implies $q-h_v\in\cJ_q$ and hence
$q-h_v-1\ge0$. By \eqref{eq:RC-power-entrywise},
$\bR^{q-h_v-1}\bpi$ is the indicator vector of the nodes at rooted distance at
most $q-h_v-1$. Hence every entry of $s_v$ belongs to $\crk{-1,0}$, and in
particular $\norm{s_v}_\infty\le1$.

Since each entry of
    $\sum_{j\in\cJ_q}\bPi_{\bR}\bR^{j-1}\bS\bA_{q-j}$ has absolute value at
most $1$, we have
\begin{equation}
    \label{eq:RC-interaction-uniform}
    \norm{
    \sum_{j\in\cJ_q}\bPi_{\bR}\bR^{j-1}\bS\bA_{q-j}
    }_F^2
    \le
    n^2,
\end{equation}
which completes the proof.

\subsection{Proof of Lemma \ref{lem:Ak-barF}}
\label{pf:lem:Ak-barF}

Let
\[
    U_t
    :=
    \norm{\sum_{m=0}^{\min\crk{t,D}}\bpi^\T\bA_m
    \nabla\bar{\bF}^{(t-m)}}^2 .
\]
By Lemma~\ref{lem:Ak},
$\bpi^\T\bA_m=\bfe_{1,m}^\T$ for every $0\le m\le D$. Moreover, the $i$-th
row of $\nabla\bar{\bF}^{(s)}$ is
$\nabla f_i(x_i^{(s)})-\nabla f_i(x_1^{(s)})$. Hence, the sum defining $U_t$
collects exactly the agents whose information has reached the root by time
$t$:
\[
    U_t
    =
    \norm{
    \sum_{i\in\cN:\,h_i\le t}
    \prt{
    \nabla f_i(x_i^{(t-h_i)})
    -\nabla f_i(x_1^{(t-h_i)})}
    }^2 .
\]
There are at most $n$ summands. Lemma~\ref{lem:sum_matrix} and
Assumption~\ref{a.smooth} therefore give the pointwise bound
\[
    U_t
    \le
    nL^2
    \sum_{i\in\cN:\,h_i\le t}
    \norm{x_i^{(t-h_i)}-x_1^{(t-h_i)}}^2 .
\]
Taking expectations, summing over $t$, and setting $s=t-h_i$ for each agent
yield
\[
\begin{aligned}
    \sum_{t=0}^{T-1}\expect U_t
    &\le
    nL^2
    \sum_{i\in\cN:\,h_i\le T-1}
    \sum_{s=0}^{T-1-h_i}
    \expect\norm{x_i^{(s)}-x_1^{(s)}}^2\\
    &\le
    nL^2
    \sum_{s=0}^{T-1}\sum_{i=1}^{n}
    \expect\norm{x_i^{(s)}-x_1^{(s)}}^2=
    nL^2\sum_{s=0}^{T-1}
    \expect\norm{\hat{\bX}^{(s)}}_F^2.
\end{aligned}
\]
Here, the last identity follows from
$\sum_{i=1}^{n}\norm{x_i^{(s)}-x_1^{(s)}}^2
=\norm{\bPi_{\bR}\bX^{(s)}}_F^2$.
This proves the claimed estimate.

\subsection{Proof of Lemma \ref{lem:X_diff}}
\label{pf:lem:X_diff}

Equation~\eqref{eq:root-diff} expresses each root-trajectory increment as a
finite-memory sum of layer-delayed stochastic gradients:
\begin{equation}
    \label{eq:lem:X_diff-2}
    \Delta\bar{\bX}^{(t)}
    =
    -\gamma\mone\sum_{m=0}^{\min\crk{t,D}}\bpi^\T\bA_m\bG^{(t-m)}.
\end{equation}
To separate the sources of this movement, decompose
$\bG^{(s)}=\bTh^{(s)}+\nabla\bar{\bF}^{(s)}
+\nabla\bF(\bar{\bX}^{(s)})$. For the last component, discrete summation by
parts, together with the definition of $\bA_m$ and the finite-step identity
$\bPi_{\bC}\bC^D=\mathbf{0}$, gives
\[
\begin{aligned}
&\sum_{m=0}^{\min\crk{t,D}}\bpi^\T\bA_m
\nabla\bF(\bar{\bX}^{(t-m)})\\
&\quad=
\bpi^\T\sum_{m=0}^{\min\crk{t,D}-1}\bPi_{\bC}\bC^m
\prt{\nabla\bF(\bar{\bX}^{(t-m)})
-\nabla\bF(\bar{\bX}^{(t-m-1)})}\\
&\qquad
+\bpi^\T\bPi_{\bC}\bC^t\nabla\bF^{(0)}
-\bpi^\T\bPi_{\bC}\nabla\bF(\bar{\bX}^{(t)})
+\bpi^\T\nabla\bF(\bar{\bX}^{(t)}).
\end{aligned}
\]
Substituting this identity into \eqref{eq:lem:X_diff-2} yields
\begin{equation}
    \label{eq:lem:X_diff-2-1}
    \Delta\bar{\bX}^{(t)}
    =
    \bH_1^{(t)}+\bH_2^{(t)}+\bH_3^{(t)}+\bH_4^{(t)}+\bH_5^{(t)},
\end{equation}
where a sum over an empty index set is understood to be zero and
\[
\begin{aligned}
\bH_1^{(t)}
&:=
-\gamma\mone
\sum_{m=0}^{\min\crk{t,D}}\bpi^\T\bA_m\bTh^{(t-m)}, \quad \bH_2^{(t)}
:=
-\gamma\mone
\sum_{m=0}^{\min\crk{t,D}}\bpi^\T\bA_m\nabla\bar{\bF}^{(t-m)},\\
\bH_3^{(t)}
&:=
-\gamma\mone\bpi^\T
\sum_{m=0}^{\min\crk{t,D}-1}
\bPi_{\bC}\bC^m
\prt{\nabla\bF(\bar{\bX}^{(t-m)})
-\nabla\bF(\bar{\bX}^{(t-m-1)})},\\
\bH_4^{(t)}
&:=
-\gamma\mone\bpi^\T\bPi_{\bC}\bC^t\nabla\bF^{(0)},\quad \bH_5^{(t)}
:=
\gamma\mone\bpi^\T\bPi_{\bC}\nabla\bF(\bar{\bX}^{(t)})
-\gamma\mone\bpi^\T\nabla\bF(\bar{\bX}^{(t)}).
\end{aligned}
\]
The five terms represent, respectively, the layer-delayed stochastic noise,
the layer-delayed disagreement gradient, the finite-memory variation of the
root trajectory, the initialization boundary term, and the full gradient at
the root. We estimate them in this order.

For $\bH_1^{(t)}$, Lemma \ref{lem:Ak} shows that 
it collects the noise generated by disjoint tree layers at the
corresponding delayed times. These noise variables are centered and independent
across the relevant node--time pairs. Since at most $n$ nodes are involved,
Lemma \ref{lem:var} and $\norm{\mone}^2=n$ yield
\[
    \sum_{t=0}^{T-1}\expect\norm{\bH_1^{(t)}}_F^2
    \le
    \gamma^2 n^2\sigma^2 T.
\]
For $\bH_2^{(t)}$, Lemma \ref{lem:Ak-barF} gives
\[
    \sum_{t=0}^{T-1}\expect\norm{\bH_2^{(t)}}_F^2
    \le
    \gamma^2 n^2L^2
    \sum_{t=0}^{T-1}\expect\norm{\hat{\bX}^{(t)}}_F^2 .
\]
For $\bH_3^{(t)}$, Lemma \ref{lem:sum_matrix}, Assumption
\ref{a.smooth}, and Lemma \ref{lem:Rk_norm} give, for each $t$,
\[
    \expect\norm{\bH_3^{(t)}}_F^2
    \le
    2\gamma^2 n D L^2
    \sum_{m=0}^{\min\crk{t,D}-1}
    \prt{n-n_m}
    \expect\norm{\Delta\bar{\bX}^{(t-m-1)}}_F^2 .
\]
Summing over $t$ and applying Lemma \ref{lem:sum_help}, together with
$\sum_{m=0}^{D-1}(n-n_m)=nD_{\avg}$, yields
\[
    \sum_{t=0}^{T-1}\expect\norm{\bH_3^{(t)}}_F^2
    \le
    2\gamma^2 n^2 D D_{\avg}L^2
    \sum_{t=0}^{T-1}\expect\norm{\Delta\bar{\bX}^{(t)}}_F^2 .
\]
Similarly, for $\bH_4^{(t)}$, Lemma \ref{lem:Rk_norm} and
$\bPi_{\bC}\bC^t=\mathbf{0}$ for $t\ge D$ imply
\[
    \sum_{t=0}^{T-1}\expect\norm{\bH_4^{(t)}}_F^2
    \le
    2\gamma^2 n^2D_{\avg}\norm{\nabla\bF^{(0)}}_F^2 .
\]
Finally, since all rows of $\nabla\bF(\bar{\bX}^{(t)})$ are evaluated at
$x_1^{(t)}$, the definition $f=n^{-1}\sum_{i=1}^n f_i$ gives
\[
    \expect\norm{\bH_5^{(t)}}_F^2
    =
    \gamma^2 n^3\expect\norm{\nabla f(x_1^{(t)})}^2 .
\]
Then, it follows that
\begin{equation}
    \label{eq:lem:X_diff-4}
    \begin{aligned}
        \sum_{t=0}^{T-1}\expect\norm{\Delta\bar{\bX}^{(t)}}_F^2
        &\le
        2\gamma^2 n^2\sigma^2 T
        +10\gamma^2 n^2L^2
        \sum_{t=0}^{T-1}\expect\norm{\hat{\bX}^{(t)}}_F^2\\
        &\quad
        +10\gamma^2 n^2 D D_{\avg}L^2
        \sum_{t=0}^{T-1}\expect\norm{\Delta\bar{\bX}^{(t)}}_F^2\\
        &\quad
        +20\gamma^2 n^2 D_{\avg}\norm{\nabla\bF^{(0)}}_F^2
        +10\gamma^2 n^3
        \sum_{t=0}^{T-1}\expect\norm{\nabla f(x_1^{(t)})}^2 .
    \end{aligned}
\end{equation}
If $\gamma\le1/(6n\sqrt{D D_{\avg}}L)$, then
$10\gamma^2n^2DD_{\avg}L^2\le 5/18<1/2$. Moving the corresponding term in
\eqref{eq:lem:X_diff-4} to the left and multiplying the remaining terms by at
most two gives the claimed bound.

\subsection{Proof of Lemma \ref{lem:PiX}}
\label{pf:lem:PiX}

Using \eqref{eq:hatX-Y} and the decomposition
$\bG^{(s)}=\bTh^{(s)}+\nabla\bar{\bF}^{(s)}
+\nabla\bF(\bar{\bX}^{(s)})$, we have
\[
\begin{aligned}
    \hat{\bX}^{(t)}
    &=
    - \gamma\sum_{m=0}^{t-1}\sum_{j=1}^{t-m}
    \bPi_{\bR}\bR^{j-1}\bS\bA_{t-m-j}
    \prt{ \bTh^{(m)} + \nabla\bar{\bF}^{(m)}
    + \nabla\bF(\bar{\bX}^{(m)})} \\
    & := \bM_1^{(t)} + \bM_2^{(t)} + \bM_3^{(t)},
\end{aligned}
\] 
Here, $\bM_1^{(t)}$, $\bM_2^{(t)}$, and $\bM_3^{(t)}$ are the noise,
disagreement-gradient, and root-gradient terms, respectively.

For $\bM_1^{(t)}$, the finite supports of $\bA_k$ and
$\bPi_{\bR}\bR^{j-1}$ restrict the contributing indices to
$1\le t-m\le 2D$. Thus,
\begin{equation}
    \label{eq:lem:PiX-1}
    \begin{aligned}
        \expect\norm{\bM_1^{(t)}}_F^2
        &=
        \gamma^2\expect\norm{
        \sum_{m=\max\crk{0,t-2D}}^{t-1}
        \sum_{j=\max\crk{1,t-m-D}}^{\min\crk{t-m,D}}
        \bPi_{\bR}\bR^{j-1}\bS\bA_{t-m-j}\bTh^{(m)}
        }_F^2 .
    \end{aligned}
\end{equation}
Assumption \ref{a.var} and Lemma
\ref{lem:RC-interaction} give
\[
 \expect\norm{\bM_1^{(t)}}_F^2
 \le
 \gamma^2
 \sum_{m=\max\crk{0,t-2D}}^{t-1}
 \norm{
 \sum_{j=\max\crk{1,t-m-D}}^{\min\crk{t-m,D}}
 \bPi_{\bR}\bR^{j-1}\bS\bA_{t-m-j}
 }_F^2\sigma^2
 \le 8\gamma^2 n^2 D \sigma^2 .
\]
For $\bM_2^{(t)}$, Lemma \ref{lem:sum_matrix}, Assumption
\ref{a.smooth}, and Lemma \ref{lem:RC-interaction} yield
\[
\begin{aligned}
    \expect\norm{\bM_2^{(t)}}_F^2
    &=
    \gamma^2 \expect\norm{
    \sum_{m=\max\crk{0,t-2D}}^{t-1}
    \sum_{j=\max\crk{1,t-m-D}}^{\min\crk{t-m,D}}
    \bPi_{\bR}\bR^{j-1}\bS\bA_{t-m-j}
    \nabla\bar{\bF}^{(m)}
    }_F^2  \\
    &\le
    2 \gamma^2 D
    \sum_{m=\max\crk{0,t-2D}}^{t-1}
    \expect \norm{
    \sum_{j=\max\crk{1,t-m-D}}^{\min\crk{t-m,D}}
    \bPi_{\bR}\bR^{j-1}\bS\bA_{t-m-j}
    }_F^2
    \norm{ \nabla\bar{\bF}^{(m)} }_F^2 \\
    &\le
    8 \gamma^2 n^2 D L^2
    \sum_{m=\max\crk{0,t-2D}}^{t-1}
    \expect \norm{\hat{\bX}^{(m)}}_F^2 .
\end{aligned}
\]
Summing over $t$ and applying Lemma \ref{lem:sum_help} gives
\begin{equation}
    \label{eq:lem:PiX-2}
    \sum_{t=0}^{T-1} \expect\norm{\bM_2^{(t)}}_F^2 \le 16 \gamma^2 n^2 D^2 L^2 \sum_{t=0}^{T-1} \expect \norm{\hat{\bX}^{(t)}}_F^2. 
\end{equation}
Reordering the sum in $\bM_3^{(t)}$ gives
\[
\begin{aligned}
    \bM_3^{(t)}
    &=
    - \gamma\sum_{m=0}^{t-1}\sum_{j=1}^{t-m}
    \bPi_{\bR}\bR^{j-1}\bS\bA_{t-m-j}
    \nabla\bF(\bar{\bX}^{(m)}) \\
    &=
    - \gamma\sum_{j=1}^{t}
    \bPi_{\bR}\bR^{j-1}\bpi
    \sum_{m=0}^{t-j}
    \bpi^\T\bA_{t-m-j} \nabla\bF(\bar{\bX}^{(m)}) .
\end{aligned}
\]
With $s=t-j$, split the inner sum into delay-disagreement, full-gradient, and
initialization-boundary terms:
\[
\begin{aligned}
    &\sum_{m=0}^{t-j}
    \bpi^\T\bA_{t-m-j}\nabla\bF(\bar{\bX}^{(m)})
    =
    \sum_{q=0}^{s}
    \bpi^\T\bA_q\nabla\bF(\bar{\bX}^{(s-q)}) \\
    = &
    \underbrace{
    \sum_{q=0}^{s}
    \bpi^\T\bA_q
    \brk{\nabla\bF(\bar{\bX}^{(s-q)})
    -\nabla\bF(\bar{\bX}^{(s)})}
    }_{:=\bM_{3,1}^{(s)}}
    +
    \underbrace{
    \sum_{q=0}^{D}
    \bpi^\T\bA_q\nabla\bF(\bar{\bX}^{(s)})
    }_{:=\bM_{3,2}^{(s)}} 
    \underbrace{
    -\sum_{q=\min\crk{s,D}+1}^{D}
    \bpi^\T\bA_q\nabla\bF(\bar{\bX}^{(s)})
    }_{:=\bM_{3,3}^{(s)}} .
\end{aligned}
\]
Lemma \ref{lem:Ak} and the delay identity
\eqref{eq:tree-rgt-decision-delay} give
\[
\begin{aligned}
    \bM_{3,1}^{(t-j)} & = \sum_{q=0}^{\min\crk{t-j,D}} \sum_{i\in\cI_{1,q}} \brk{\nabla f_i(x_1^{(t-j-q)}) - \nabla f_i(x_1^{(t-j)})} \\
    & = \sum_{q=0}^{\min\crk{t-j,D}} \sum_{i\in\cI_{1,q}} \brk{\nabla f_i(x_i^{(t-j)}) - \nabla f_i(x_1^{(t-j)})} \\
    & = \sum_{i \in \brk{n}, h_i \le t-j} \brk{\nabla f_i(x_i^{(t-j)}) - \nabla f_i(x_1^{(t-j)})}. \\
\end{aligned}
\]
Assumption \ref{a.smooth} and Lemma \ref{lem:sum_matrix} then imply
\begin{equation}
    \label{eq:lem:PiX-m31}
\begin{aligned}
    \norm{\bM_{3,1}^{(t-j)}}^2 & \le n \sum_{i \in \brk{n}, h_i \le t-j} L^2 \norm{x_i^{(t-j)} - x_1^{(t-j)}}^2 \le n L^2 \norm{\hat{\bX}^{(t-j)}}_F^2.
\end{aligned}
\end{equation}
Since $\bPi_{\bR}\bR^{j-1}\bpi$ is the negative indicator of the nodes beyond
depth $j-1$, $\norm{\bPi_{\bR}\bR^{j-1}\bpi}^2=n-n_{j-1}$. Lemmas
\ref{lem:sum_matrix} and \ref{lem:sum_help} give
\begin{equation}
    \label{eq:lem:PiX-m31-1}
    \begin{aligned}
    &  \sum_{t=0}^{T-1} \expect\norm{\sum_{j=1}^{t} \bPi_{\bR}\bR^{j-1}\bpi\bM_{3,1}^{(t-j)}}_F^2 = \sum_{t=0}^{T-1}\expect \norm{\sum_{j=1}^{\min\crk{t,D}} \bPi_{\bR}\bR^{j-1}\bpi\bM_{3,1}^{(t-j)}}_F^2 \\
    \le & n D L^2 \sum_{t=0}^{T-1} \sum_{j=1}^{\min\crk{t,D}} (n-n_{j-1})\expect \norm{\hat{\bX}^{(t-j)}}_F^2 \le n^2 D D_{\avg} L^2  \sum_{t=0}^{T-1} \expect \norm{\hat{\bX}^{(t)}}_F^2.
    \end{aligned}
\end{equation}
The full-gradient term satisfies
\[
\begin{aligned}
    \bM_{3,2}^{(t-j)}
    &=
    \sum_{q=0}^{D}\sum_{i\in \cI_{1,q}}
    \nabla f_i(x^{(t-j)}_1)
    =
    \sum_{i\in \brk{n}} \nabla f_i(x^{(t-j)}_1)
    =
    n \nabla f(x_1^{(t-j)}),
\end{aligned}
\]
and hence
\begin{equation}
    \label{eq:lem:PiX-m32}
    \norm{\bM_{3,2}^{(t-j)}}^2 = n^2 \norm{\nabla f(x_1^{(t-j)})}^2.
\end{equation}
Applying the same argument as for \eqref{eq:lem:PiX-m31-1},
\begin{equation}
    \label{eq:lem:PiX-m32-1}
    \begin{aligned}
    &  \sum_{t=0}^{T-1} \expect\norm{\sum_{j=1}^{t} \bPi_{\bR}\bR^{j-1}\bpi\bM_{3,2}^{(t-j)}}_F^2 = \sum_{t=0}^{T-1}\expect \norm{\sum_{j=1}^{\min\crk{t,D}} \bPi_{\bR}\bR^{j-1}\bpi\bM_{3,2}^{(t-j)}}_F^2 \\
    \le & n^2 D \sum_{t=0}^{T-1} \sum_{j=1}^{\min\crk{t,D}} (n-n_{j-1})\expect \norm{\nabla f(x_1^{(t-j)})}^2 \le n^3 D D_{\avg} \sum_{t=0}^{T-1} \expect \norm{\nabla f(x_1^{(t)})}^2.
    \end{aligned}
\end{equation}
Finally, $\bM_{3,3}^{(t-j)}$ is nonzero only for $t-j<D$. Set
\[
\begin{aligned}
    & \tilde{\bM}_{3,3}^{(t)}
    :=
    \sum_{j=1}^{t}
    \bPi_{\bR}\bR^{j-1}\bpi\bM_{3,3}^{(t-j)}  \\
    = &
    -\sum_{j=\max\crk{1,t-D+1}}^{\min\crk{t,D}}
    \bPi_{\bR}\bR^{j-1}\bpi
    \sum_{q=t-j+1}^{D}\sum_{i\in\cI_{1,q}}
    \nabla f_i(x_1^{(t-j)})  \\
    = &
    -\sum_{j=\max\crk{1,t-D+1}}^{\min\crk{t,D}}
    \bPi_{\bR}\bR^{j-1}\bpi
    \sum_{i\in \brk{n},\, h_i > t-j}
    \brk{\nabla f_i(x_1^{(t-j)})-\nabla f_i(x^{(0)})
    +\nabla f_i(x^{(0)})}.
\end{aligned}
\]
The two finite-support conditions imply
$\tilde{\bM}_{3,3}^{(t)}=\mathbf{0}$ for $t\ge2D$. Let
\[
    \cJ_t:=\crk{j:\ \max\crk{1,t-D+1}\le j\le \min\crk{t,D}}.
\]
Define
\[
    \Phi_{t,j}
    :=
    n \sum_{i\in \brk{n},\, h_i > t-j}
    \expect\brk{
    2 \norm{ \nabla f_i(x_1^{(t-j)})
    - \nabla f_i(x^{(t-j)}_i) }^2
    + 2 \norm{ \nabla f_i(x^{(0)}) }^2 } .
\]
Lemma \ref{lem:sum_matrix}, Assumption \ref{a.smooth}, and
\eqref{eq:tree-rgt-decision-delay} give
\begin{equation}
    \label{eq:lem:PiX-m33}
    \begin{aligned}
        & \sum_{t=0}^{T-1}\expect \norm{\tilde{\bM}_{3,3}^{(t)}}_F^2
        \le
        \sum_{t=0}^{\min\crk{T-1,2D-1}}
        D
        \sum_{j\in\cJ_t}
        \norm{\bPi_{\bR}\bR^{j-1}\bpi}^2
        \Phi_{t,j}\\
        \le &
        2 n D
        \sum_{t=0}^{\min\crk{T-1,2D-1}}
        \sum_{j\in\cJ_t}
        \prt{n- n_{j-1}}
        \sum_{i\in \brk{n}}
        \expect \brk{
        L^2 \norm{x_1^{(t-j)} - x^{(t-j)}_i}^2
        + \norm{ \nabla f_i(x^{(0)}) }^2 } \\
        \le &
        2 n D L^2
        \sum_{t=0}^{\min\crk{T-1,2D-1}}
        \sum_{j\in\cJ_t}
        \prt{n- n_{j-1}}
        \expect \norm{\hat{\bX}^{(t-j)}}_F^2
        + 8n^2 D^2 D_{\avg} \norm{\nabla\bF^{(0)}}_F^2 \\
        \le &
        2 n^2 D D_{\avg} L^2
        \sum_{t=0}^{T-1} \expect \norm{\hat{\bX}^{(t)}}_F^2
        + 8n^2 D^2 D_{\avg} \norm{\nabla\bF^{(0)}}_F^2.
    \end{aligned}
\end{equation}
The last two inequalities use $s=t-j$ and
\[
    \sum_{j=1}^{D}\prt{n-n_{j-1}}=nD_{\avg}.
\]
Combining \eqref{eq:lem:PiX-m31-1}--\eqref{eq:lem:PiX-m33} with
$\norm{a+b+c}^2\le 3(\norm{a}^2+\norm{b}^2+\norm{c}^2)$ gives
\begin{equation}
    \label{eq:lem:PiX-3-1}
    \begin{aligned}
        & \sum_{t=0}^{T-1}\expect \norm{\bM_{3}^{(t)}}_F^2
        \le
        10 \gamma^2 n^2 D D_{\avg} L^2
        \sum_{t=0}^{T-1} \expect \norm{\hat{\bX}^{(t)}}_F^2 \\
        &\quad
        + 10 \gamma^2 n^3 D D_{\avg}
        \sum_{t=0}^{T-1} \expect \norm{\nabla f(x_1^{(t)})}^2
        + 30 \gamma^2 n^2 D^2 D_{\avg}
        \norm{\nabla\bF^{(0)}}_F^2 .
    \end{aligned}
\end{equation}
Combining \eqref{eq:lem:PiX-1}, \eqref{eq:lem:PiX-2}, and
\eqref{eq:lem:PiX-3-1}, and using $D_{\avg}\le D$, gives, for
$\gamma \le 1/(20 n D L)$,
\[
\begin{aligned}
    & \sum_{t=0}^{T-1} \expect\norm{\hat{\bX}^{(t)}}_F^2  \le 3 \sum_{t=0}^{T-1} \expect\norm{\bM_1^{(t)}}_F^2 + 3 \sum_{t=0}^{T-1} \expect\norm{\bM_2^{(t)}}_F^2 + 3 \sum_{t=0}^{T-1} \expect\norm{\bM_3^{(t)}}_F^2 \\
    \le & 25 \gamma^2 n^2 D \sigma^2 T + 80 \gamma^2 n^2 D^2 L^2 \sum_{t=0}^{T-1} \expect\norm{\hat{\bX}^{(t)}}_F^2 + 30\gamma^2 n^3 D D_{\avg} \sum_{t=0}^{T-1} \expect \norm{\nabla f(x_1^{(t)})}^2  \\
    & \quad + 90 \gamma^2 n^2 D^2 D_{\avg} \norm{\nabla\bF^{(0)}}_F^2 .
\end{aligned}
\]
The stepsize condition gives $80 \gamma^2 n^2 D^2 L^2 \le 1/5$. Absorbing
this term into the left-hand side and rounding the constants upward yields
\[
\begin{aligned}
    & \sum_{t=0}^{T-1} \expect\norm{\hat{\bX}^{(t)}}_F^2 \le 32 \gamma^2 n^2 D \sigma^2 T + 40 \gamma^2 n^3 D D_{\avg} \sum_{t=0}^{T-1} \expect \norm{\nabla f(x_1^{(t)})}^2 \\
    & \quad + 120 \gamma^2 n^2 D^2 D_{\avg} \norm{\nabla\bF^{(0)}}_F^2 .
\end{aligned}
\]
This is the claimed disagreement bound.

\subsection{Proof of Lemma \ref{lem:smooth}}
\label{pf:lem:smooth}

    Let $f^\star:=\inf_x f(x)$ and
    $\Delta_f:=f(x^{(0)})-f^\star$. By Assumption \ref{a.smooth}, the aggregate
    objective $f:=\frac{1}{n}\sum_{i=1}^n f_i$ is $L$-smooth. Applying the
    smoothness inequality along the root update gives
    \begin{equation}
        \label{eq:lem:smooth-1}
        \expect f(x_1^{(t+1)}) \le \expect f(x_1^{(t)}) + \expect \ip{\nabla f(x_1^{(t)})}{ x_1^{(t+1)} - x_1^{(t)}}  + \frac{L}{2}\expect \norm{x_1^{(t+1)} - x_1^{(t)}}^2.
    \end{equation}
    We first decompose the inner-product term in
    \eqref{eq:lem:smooth-1}. The layer representation
    \eqref{eq:pp2-layer} gives
    \begin{equation}
        \label{eq:lem:smooth-2}
        \prt{x_1^{(t+1)} - x_1^{(t)}}^\T
        = -\gamma\bpi^\T\bY^{(t)}
        = -\gamma\bpi^\T\sum_{m=0}^{\min\crk{t,D}}\bA_m \bG^{(t-m)} .
    \end{equation}
    To separate the stochastic error, the disagreement-gradient error, and the
    delayed averaged gradients in this update, write
    $\bG^{(s)}=\bTh^{(s)}+\nabla\bar{\bF}^{(s)}
    +\nabla\bF(\bar{\bX}^{(s)})$ and define
    \[
    \begin{aligned}
        \bQ_1^{(t)} & := \bpi^\T\sum_{m=0}^{\min\crk{t,D}}\bA_m \bTh^{(t-m)},\quad \bQ_2^{(t)} := \bpi^\T\sum_{m=0}^{\min\crk{t,D}}\bA_m \nabla\bar{\bF}^{(t-m)}, \\
        \bQ_3^{(t)} & := \bpi^\T\sum_{m=0}^{\min\crk{t,D} - 1} \bPi_\bC\bC^m
        \prt{\nabla \bF(\bar{\bX}^{(t-m)})
        - \nabla \bF(\bar{\bX}^{(t-m-1)})}
        + \bpi^\T\bPi_\bC\bC^{t} \nabla\bF^{(0)}, \\
    \end{aligned}
    \]
    Here, $\bQ_1^{(t)}$ collects the layer-delayed stochastic errors,
    $\bQ_2^{(t)}$ collects the corresponding disagreement-gradient errors,
    and $\bQ_3^{(t)}$ is the finite-memory correction associated with the
    delayed averaged gradients. In particular, the latter gradients satisfy
    the exact decomposition
    \[
    \bpi^\T\sum_{m=0}^{\min\crk{t,D}}\bA_m \nabla\bF(\bar{\bX}^{(t-m)}) = \bQ_3^{(t)} + \mone^\T\nabla \bF(\bar{\bX}^{(t)}).
    \]
    Lemma \ref{lem:independ1} eliminates the stochastic component from the
    expected inner product:
    \[
    \begin{aligned}
        & \expect \ip{\nabla f(x_1^{(t)})}{\prt{\bQ_1^{(t)}}^\T } = \expect \ip{\nabla f(x_1^{(t)})}{\sum_{m=0}^{\min\crk{t,D}} \sum_{j\in\cI_{1,m}}\prt{g_j(x_j^{(t-m)},\xi_j^{(t-m)})
            -\nabla f_j(x_j^{(t-m)})} } = 0.
    \end{aligned}
    \]
    Since
    $\mone^\T\nabla\bF(\bar{\bX}^{(t)})
    =n\nabla f(x_1^{(t)})^\T$, the expected inner product decomposes into the
    principal descent direction and two correction terms:
    \begin{equation}
        \label{eq:lem:smooth-2-2}
        \begin{aligned}
            &\expect\ip{\nabla f(x_1^{(t)})}{x_1^{(t+1)}-x_1^{(t)}} 
            =
            -\gamma n\expect\norm{\nabla f(x_1^{(t)})}^2
            -\gamma\expect\bP_1^{(t)}
            -\gamma\expect\bP_2^{(t)},
        \end{aligned}
    \end{equation}
    where
    \[
    \begin{aligned}
        \bP_1^{(t)}
        &:=
        \ip{\nabla f(x_1^{(t)})}{
        \prt{\bQ_2^{(t)}}^\T},\quad 
        \bP_2^{(t)}
        :=
        \ip{\nabla f(x_1^{(t)})}{
        \prt{\bQ_3^{(t)}}^\T} .
    \end{aligned}
    \]
    We bound the two corrections separately. For the disagreement-gradient
    term $\bP_1^{(t)}$, Young's inequality gives
    \begin{equation}
        \label{eq:lem:smooth-3}
        \begin{aligned}
            -\gamma \expect  \bP_1^{(t)} \le & \frac{n}{4}\gamma \expect \norm{\nabla f(x_1^{(t)})}^2 + \frac{\gamma}{n}\expect\norm{\bQ_2^{(t)}}^2 .
        \end{aligned}
    \end{equation}
    Summing over $t$ and applying Lemma \ref{lem:Ak-barF} converts the second
    term into accumulated root-relative disagreement:
    \begin{equation}
        \label{eq:lem:smooth-3-1}
        -\gamma \sum_{t=0}^{T-1}\expect  \bP_1^{(t)} \le \frac{n}{4}\gamma  \sum_{t=0}^{T-1} \expect \norm{\nabla f(x_1^{(t)})}^2 + \gamma  L^2 \sum_{t=0}^{T-1} \expect \norm{\hat{\bX}^{(t)}}_F^2.
    \end{equation}
    We next control the finite-memory correction $\bP_2^{(t)}$. Applying
    Young's inequality and separating the two components of $\bQ_3^{(t)}$
    yields
    \begin{equation}
        \label{eq:lem:smooth-4}
        \begin{aligned}
            & -\gamma \expect  \bP_2^{(t)} \le  \frac{\gamma n}{4} \expect \norm{\nabla f(x_1^{(t)})}^2 + \frac{\gamma }{n} \expect \norm{\bQ_3^{(t)}}^2 \\
            & \le \frac{\gamma n}{4} \expect \norm{\nabla f(x_1^{(t)})}^2 + \frac{2\gamma}{n}\norm{\bPi_\bC\bC^{t}}_2^2 \norm{\nabla\bF^{(0)}}_F^2 \\
            & \quad + \frac{2\gamma}{n}  \expect \norm{  \bpi^\T\sum_{m=0}^{\min\crk{t,D} - 1} \bPi_\bC\bC^m \prt{\nabla \bF(\bar{\bX}^{(t-m)}) - \nabla \bF(\bar{\bX}^{(t-m-1)})}   }_F^2.\\
        \end{aligned}
    \end{equation}
    It remains to estimate the telescoping component in
    \eqref{eq:lem:smooth-4}. Since $\bpi^\T\bPi_{\bC}\bC^m$ is the negative
    indicator of the nodes with depth greater than $m$, we may reorder the sum
    by node and telescope over $m$. Equation
    \eqref{eq:tree-rgt-decision-delay} then identifies the resulting endpoint
    with $x_i^{(t)}$. The common negative sign disappears upon taking the norm,
    and hence
    \[
    \begin{aligned}
        & \norm{  \bpi^\T\sum_{m=0}^{\min\crk{t,D} - 1} \bPi_\bC\bC^m \prt{\nabla \bF(\bar{\bX}^{(t-m)}) - \nabla \bF(\bar{\bX}^{(t-m-1)})}   }_2^2 \\
        = & \norm{  \sum_{m=0}^{\min\crk{t,D} - 1} \sum_{i\in \brk{n}, h_i > m} \brk{\nabla f_i(x_1^{(t-m)}) - \nabla f_i(x_1^{(t-m-1)})} }_2^2 \\
        = & \norm{ \sum_{i\in \brk{n}} \sum_{m=0}^{\min\crk{h_i,t} -1 } \brk{\nabla f_i(x_1^{(t-m)}) - \nabla f_i(x_1^{(t-m-1)})}  }_2^2 = \norm{  \sum_{i\in \brk{n}} \brk{ \nabla f_i(x_1^{(t)}) -  \nabla f_i(x_1^{(t-\min\crk{h_i,t})}) }  }_2^2 \\
        = & \norm{ \sum_{i\in \brk{n}}  \brk{ \nabla f_i(x_1^{(t)}) -  \nabla f_i(x_i^{(t)}) }  }_2^2 \le n L^2 \sum_{i\in \brk{n}} \norm{x_1^{(t)} - x_i^{(t)}}^2 = n L^2 \norm{\hat{\bX}^{(t)}}_F^2.
    \end{aligned}
    \]
    Thus, the temporal correction is also controlled by the root-relative
    disagreement. Summing \eqref{eq:lem:smooth-4} over $t$ and using Lemma
    \ref{lem:Rk_norm} for the initialization component gives
    \begin{equation}
        \label{eq:lem:smooth-4-1}
        -\gamma \sum_{t=0}^{T-1}\expect  \bP_2^{(t)} \le  \frac{\gamma n}{4} \sum_{t=0}^{T-1} \expect \norm{\nabla f(x_1^{(t)})}^2 + 2 \gamma L^2 \sum_{t=0}^{T-1} \expect \norm{ \hat{\bX}^{(t)}  }_F^2 + 4\gamma D_{\avg} \norm{\nabla\bF^{(0)}}_F^2.
    \end{equation}
    We can now assemble the descent inequality. Since
    $\norm{\Delta\bar{\bX}^{(t)}}_F^2
    =n\norm{x_1^{(t+1)}-x_1^{(t)}}^2$, summing
    \eqref{eq:lem:smooth-1} over $t$, using
    $f(x_1^{(T)})\ge f^\star$, and applying
    \eqref{eq:lem:smooth-2-2}--\eqref{eq:lem:smooth-4-1} gives
    \begin{equation}
        \label{eq:lem:smooth-6}
        \begin{aligned}
            & -\Delta_f \le 
            -\gamma n\sum_{t=0}^{T-1}
            \expect\norm{\nabla f(x_1^{(t)})}^2
            -\gamma \sum_{t=0}^{T-1}\expect  \bP_1^{(t)}
            -\gamma \sum_{t=0}^{T-1}\expect  \bP_2^{(t)}
            + \frac{L}{2n}\sum_{t=0}^{T-1}
            \expect \norm{\Delta\bar{\bX}^{(t)}}_F^2\\
            \le &  -\frac{\gamma n}{2} \sum_{t=0}^{T-1}\expect \norm{\nabla f(x_1^{(t)})}^2 + 3 \gamma L^2 \sum_{t=0}^{T-1} \expect \norm{\hat{\bX}^{(t)}}_F^2  +  \frac{L}{2n} \sum_{t=0}^{T-1} \expect \norm{\Delta\bar{\bX}^{(t)}}_F^2 + 4\gamma D_{\avg} \norm{\nabla\bF^{(0)}}_F^2,
        \end{aligned}
    \end{equation}
    The remaining accumulated errors are exactly those controlled by Lemmas
    \ref{lem:X_diff} and \ref{lem:PiX}. We first substitute Lemma
    \ref{lem:X_diff} into \eqref{eq:lem:smooth-6}. The assumed bound
    $\gamma \le 1/(40 n D L)$ also implies
    $\gamma\le1/(6n\sqrt{DD_{\avg}}L)$ because $D_{\avg}\le D$, so
    \begin{equation}
        \label{eq:lem:smooth-7}
        \begin{aligned}
            & -\Delta_f \le  2\gamma^2 n L \sigma^2 T   + \prt{3 \gamma  L^2  + 10 \gamma^2 n  L^3}\sum_{t=0}^{T-1} \expect \norm{\hat{\bX}^{(t)}}_F^2\\
            & \quad + \prt{-\frac{\gamma n}{2} + 10\gamma^2 n^2 L} \sum_{t=0}^{T-1}\expect \norm{\nabla f(x_1^{(t)})}^2  + \prt{4\gamma D_{\avg} + 20 \gamma^2 n D_{\avg} L}\norm{\nabla\bF^{(0)}}_F^2 .\\
            \le &  2\gamma^2 n L \sigma^2 T + 5\gamma  L^2 \sum_{t=0}^{T-1} \expect \norm{\hat{\bX}^{(t)}}_F^2 -\frac{\gamma n}{4}  \sum_{t=0}^{T-1}\expect \norm{\nabla f(x_1^{(t)})}^2  + 5\gamma  D_{\avg}\norm{\nabla\bF^{(0)}}_F^2 .
        \end{aligned}
    \end{equation}
    The second inequality uses the same stepsize bound to simplify the three
    coefficients. We next substitute the disagreement estimate in Lemma
    \ref{lem:PiX} into \eqref{eq:lem:smooth-7}, which gives
    \begin{equation}
        \label{eq:lem:smooth-8}
        \begin{aligned}
            & -\Delta_f \le  2\gamma^2 n L \sigma^2 T + 160 \gamma^3 n^2 D L^2 \sigma^2 T + \prt{200 \gamma^3 n^3 D D_{\avg}L^2-\frac{\gamma n}{4}} \sum_{t=0}^{T-1} \expect \norm{\nabla f(x_1^{(t)})}^2 \\
            & \quad + \prt{ 600 \gamma^3 n^2 D^2 D_{\avg}L^2 + 5\gamma  D_{\avg}}\norm{\nabla\bF^{(0)}}_F^2 \\
            & \le  6\gamma^2 n L \sigma^2 T + 6\gamma D_{\avg} \norm{\nabla\bF^{(0)}}_F^2  - \frac{\gamma n}{8} \sum_{t=0}^{T-1} \expect \norm{\nabla f(x_1^{(t)})}^2.
        \end{aligned}
    \end{equation}
    The second inequality in \eqref{eq:lem:smooth-8} uses
    $D_{\avg}\le D$ and $\gamma nDL\le1/40$. It absorbs both the stochastic
    and initialization corrections while retaining a negative multiple of the
    accumulated gradient norm. Rearranging this inequality and dividing by
    $T$ gives
    \[
    \begin{aligned}
        &\frac{1}{T} \sum_{t=0}^{T-1}\expect \norm{\nabla f(x_1^{(t)})}^2 \le \frac{10\Delta_f}{\gamma n T} + 50 \gamma L \sigma^2 +  \frac{50 D_{\avg}}{T}\frac{1}{n}\norm{\nabla\bF^{(0)}}_F^2.
    \end{aligned}
    \]
This is the desired descent estimate.

\bibliographystyle{siamplain}
\bibliography{references_all}

@article{nedic2009distributed,
  title={Distributed subgradient methods for multi-agent optimization},
  author={Nedic, Angelia and Ozdaglar, Asuman},
  journal={IEEE Transactions on Automatic Control},
  volume={54},
  number={1},
  pages={48--61},
  year={2009},
  publisher={IEEE}
}

@article{pu2020asymptotic,
  title={Asymptotic network independence in distributed stochastic optimization for machine learning: Examining distributed and centralized stochastic gradient descent},
  author={Pu, Shi and Olshevsky, Alex and Paschalidis, Ioannis Ch},
  journal={IEEE signal processing magazine},
  volume={37},
  number={3},
  pages={114--122},
  year={2020},
  publisher={IEEE}
}

@article{xi2017add,
	title={ADD-OPT: Accelerated distributed directed optimization},
	author={Xi, Chenguang and Xin, Ran and Khan, Usman A},
	journal={IEEE Transactions on Automatic Control},
	volume={63},
	number={5},
	pages={1329--1339},
	year={2017},
	publisher={IEEE}
}

@article{nedic2015distributed,
	title={Distributed optimization over time-varying directed graphs},
	author={Nedi{\'c}, Angelia and Olshevsky, Alex},
	journal={IEEE Transactions on Automatic Control},
	volume={60},
	number={3},
	pages={601--615},
	year={2015},
	publisher={IEEE},
	doi={10.1109/TAC.2014.2364096}
}

@article{yuan2023removing,
  title={Removing data heterogeneity influence enhances network topology dependence of decentralized sgd},
  author={Yuan, Kun and Alghunaim, Sulaiman A and Huang, Xinmeng},
  journal={Journal of Machine Learning Research},
  volume={24},
  number={280},
  pages={1--53},
  year={2023}
}

@article{pu2021sharp,
  title={A sharp estimate on the transient time of distributed stochastic gradient descent},
  author={Pu, Shi and Olshevsky, Alex and Paschalidis, Ioannis Ch},
  journal={IEEE Transactions on Automatic Control},
  volume={67},
  number={11},
  pages={5900--5915},
  year={2021},
  publisher={IEEE}
}

@article{huang2022improving,
  title={Improving the transient times for distributed stochastic gradient methods},
  author={Huang, Kun and Pu, Shi},
  journal={IEEE Transactions on Automatic Control},
  volume={68},
  number={7},
  pages={4127--4142},
  year={2022},
  publisher={IEEE},
  doi={10.1109/TAC.2022.3201141}
}

@article{xin2020general,
  title={A general framework for decentralized optimization with first-order methods},
  author={Xin, Ran and Pu, Shi and Nedi{\'c}, Angelia and Khan, Usman A},
  journal={Proceedings of the IEEE},
  volume={108},
  number={11},
  pages={1869--1889},
  year={2020},
  publisher={IEEE}
}

@article{koloskova2021improved,
  title={An improved analysis of gradient tracking for decentralized machine learning},
  author={Koloskova, Anastasiia and Lin, Tao and Stich, Sebastian U},
  journal={Advances in Neural Information Processing Systems},
  volume={34},
  pages={11422--11435},
  year={2021}
}

@article{xiao2004fast,
  title={Fast linear iterations for distributed averaging},
  author={Xiao, Lin and Boyd, Stephen},
  journal={Systems \& Control Letters},
  volume={53},
  number={1},
  pages={65--78},
  year={2004},
  publisher={Elsevier}
}

@article{nedic2017achieving,
	title={Achieving geometric convergence for distributed optimization over time-varying graphs},
	author={Nedic, Angelia and Olshevsky, Alex and Shi, Wei},
	journal={SIAM Journal on Optimization},
	volume={27},
	number={4},
	pages={2597--2633},
	year={2017},
	publisher={SIAM}
}

@article{nedic2018network,
  title={Network topology and communication-computation tradeoffs in decentralized optimization},
  author={Nedi{\'c}, Angelia and Olshevsky, Alex and Rabbat, Michael G},
  journal={Proceedings of the IEEE},
  volume={106},
  number={5},
  pages={953--976},
  year={2018},
  publisher={IEEE}
}

@article{pu2020push,
  title={Push--pull gradient methods for distributed optimization in networks},
  author={Pu, Shi and Shi, Wei and Xu, Jinming and Nedi{\'c}, Angelia},
  journal={IEEE Transactions on Automatic Control},
  volume={66},
  number={1},
  pages={1--16},
  year={2021},
  publisher={IEEE},
  doi={10.1109/TAC.2020.2972824}
}

@article{ying2021exponential,
  title={Exponential graph is provably efficient for decentralized deep training},
  author={Ying, Bicheng and Yuan, Kun and Chen, Yiming and Hu, Hanbin and Pan, Pan and Yin, Wotao},
  journal={Advances in Neural Information Processing Systems},
  volume={34},
  pages={13975--13987},
  year={2021}
}

@article{hoory2006expander,
  title={Expander Graphs and Their Applications},
  author={Hoory, Shlomo and Linial, Nathan and Wigderson, Avi},
  journal={Bulletin of the American Mathematical Society},
  volume={43},
  number={4},
  pages={439--561},
  year={2006},
  doi={10.1090/S0273-0979-06-01126-8}
}

@article{vogels2021relaysum,
  title={Relaysum for decentralized deep learning on heterogeneous data},
  author={Vogels, Thijs and He, Lie and Koloskova, Anastasiia and Karimireddy, Sai Praneeth and Lin, Tao and Stich, Sebastian U and Jaggi, Martin},
  journal={Advances in Neural Information Processing Systems},
  volume={34},
  pages={28004--28015},
  year={2021}
}

@article{song2022communication,
  title={Communication-Efficient Topologies for Decentralized Learning with $ O (1) $ Consensus Rate},
  author={Song, Zhuoqing and Li, Weijian and Jin, Kexin and Shi, Lei and Yan, Ming and Yin, Wotao and Yuan, Kun},
  journal={Advances in Neural Information Processing Systems},
  volume={35},
  pages={1073--1085},
  year={2022}
}

@article{pu2021distributed,
  title={Distributed stochastic gradient tracking methods},
  author={Pu, Shi and Nedi{\'c}, Angelia},
  journal={Mathematical Programming},
  volume={187},
  pages={409--457},
  year={2021},
  publisher={Springer}
}

@article{lian2017can,
  title={Can decentralized algorithms outperform centralized algorithms? a case study for decentralized parallel stochastic gradient descent},
  author={Lian, Xiangru and Zhang, Ce and Zhang, Huan and Hsieh, Cho-Jui and Zhang, Wei and Liu, Ji},
  journal={Advances in neural information processing systems},
  volume={30},
  year={2017}
}

@inproceedings{ding2023dsgd,
  title={DSGD-CECA: Decentralized SGD with Communication-Optimal Exact Consensus Algorithm},
  author={Ding, Lisang and Jin, Kexin and Ying, Bicheng and Yuan, Kun and Yin, Wotao},
  booktitle={International Conference on Machine Learning},
  pages={8067--8089},
  year={2023},
  organization={PMLR}
}

@article{shi2015extra,
  title={Extra: An exact first-order algorithm for decentralized consensus optimization},
  author={Shi, Wei and Ling, Qing and Wu, Gang and Yin, Wotao},
  journal={SIAM Journal on Optimization},
  volume={25},
  number={2},
  pages={944--966},
  year={2015},
  publisher={SIAM}
}

@inproceedings{tang2018d,
  title={$D^2$: Decentralized training over decentralized data},
  author={Tang, Hanlin and Lian, Xiangru and Yan, Ming and Zhang, Ce and Liu, Ji},
  booktitle={International Conference on Machine Learning},
  pages={4848--4856},
  year={2018},
  organization={PMLR}
}

@article{nguyen2025graphs,
  title={On graphs with finite-time consensus and their use in gradient tracking},
  author={Nguyen, Edward Duc Hien and Jiang, Xin and Ying, Bicheng and Uribe, C{\'e}sar A},
  journal={SIAM Journal on Optimization},
  volume={35},
  number={2},
  pages={872--898},
  year={2025},
  publisher={SIAM}
}

@article{you2024b,
  title={B-ary Tree Push-Pull Method is Provably Efficient for Distributed Learning on Heterogeneous Data},
  author={You, Runze and Pu, Shi},
  journal={Advances in Neural Information Processing Systems},
  volume={37},
  pages={97523--97561},
  year={2024}
}

@article{alghunaim2022unified,
  title={A unified and refined convergence analysis for non-convex decentralized learning},
  author={Alghunaim, Sulaiman A and Yuan, Kun},
  journal={IEEE Transactions on Signal Processing},
  volume={70},
  pages={3264--3279},
  year={2022},
  publisher={IEEE}
}

@article{song2024optimal,
  title={Optimal gradient tracking for decentralized optimization},
  author={Song, Zhuoqing and Shi, Lei and Pu, Shi and Yan, Ming},
  journal={Mathematical Programming},
  volume={207},
  number={1},
  pages={1--53},
  year={2024},
  publisher={Springer}
}

@article{kovalev2020optimal,
  title={Optimal and Practical Algorithms for Smooth and Strongly Convex Decentralized Optimization},
  author={Kovalev, Dmitry and Salim, Adil and Richt{\'a}rik, Peter},
  journal={Advances in Neural Information Processing Systems},
  volume={33},
  pages={18342--18352},
  year={2020}
}

@article{liang2025linear,
  title={On the Linear Speedup of the Push-Pull Method for Decentralized Optimization over Digraphs},
  author={Liang, Liyuan and Luo, Gan and Yuan, Kun},
  journal={arXiv preprint arXiv:2506.18075},
  year={2025}
}

@inproceedings{lu2021optimal,
  title={Optimal complexity in decentralized training},
  author={Lu, Yucheng and De Sa, Christopher},
  booktitle={International conference on machine learning},
  pages={7111--7123},
  year={2021},
  organization={PMLR}
}

@article{carmon2021lower,
  title={Lower bounds for finding stationary points {II}: First-order methods},
  author={Carmon, Yair and Duchi, John C. and Hinder, Oliver and Sidford, Aaron},
  journal={Mathematical Programming},
  volume={185},
  number={1--2},
  pages={315--355},
  year={2021},
  publisher={Springer},
  doi={10.1007/s10107-019-01431-x}
}

@article{ghadimi2013stochastic,
  title={Stochastic first- and zeroth-order methods for nonconvex stochastic programming},
  author={Ghadimi, Saeed and Lan, Guanghui},
  journal={SIAM Journal on Optimization},
  volume={23},
  number={4},
  pages={2341--2368},
  year={2013},
  publisher={SIAM}
}

@article{nesterov1983method,
  title={A method for unconstrained convex minimization problem with the rate of convergence $O(1/k^2)$},
  author={Nesterov, Yurii E.},
  journal={Doklady Akademii Nauk SSSR},
  volume={269},
  number={3},
  pages={543--547},
  year={1983}
}

@article{kingma2014adam,
  title={{Adam}: A method for stochastic optimization},
  author={Kingma, Diederik P. and Ba, Jimmy},
  journal={arXiv preprint arXiv:1412.6980},
  year={2014}
}

@article{tieleman2012lecture,
  title={Lecture 6.5---{RMSProp}: Divide the gradient by a running average of its recent magnitude},
  author={Tieleman, Tijmen and Hinton, Geoffrey},
  journal={COURSERA: Neural Networks for Machine Learning},
  volume={4},
  number={2},
  pages={26--31},
  year={2012}
}

@article{ward2018adagrad,
  title={{AdaGrad} stepsizes: Sharp convergence over nonconvex landscapes, from any initialization},
  author={Ward, Rachel and Wu, Xiaoxia and Bottou, Leon},
  journal={arXiv preprint arXiv:1806.01811},
  year={2018}
}

@article{zeiler2012adadelta,
  title={{AdaDelta}: An adaptive learning rate method},
  author={Zeiler, Matthew D.},
  journal={arXiv preprint arXiv:1212.5701},
  year={2012}
}

@inproceedings{scaman2017optimal,
  title={Optimal algorithms for smooth and strongly convex distributed optimization in networks},
  author={Scaman, Kevin and Bach, Francis and Bubeck, S{\'e}bastien and Lee, Yin Tat and Massouli{\'e}, Laurent},
  booktitle={International Conference on Machine Learning},
  pages={3027--3036},
  year={2017},
  organization={PMLR}
}

@article{boyd2004fastest,
  title={Fastest mixing Markov chain on a graph},
  author={Boyd, Stephen and Diaconis, Persi and Xiao, Lin},
  journal={SIAM Review},
  volume={46},
  number={4},
  pages={667--689},
  year={2004},
  publisher={SIAM}
}

@article{yuan2022decentralized,
  title={Decentralized training of foundation models in heterogeneous environments},
  author={Yuan, Binhang and He, Yongjun and Davis, Jared and Zhang, Tianyi and Dao, Tri and Chen, Beidi and Liang, Percy S and Re, Christopher and Zhang, Ce},
  journal={Advances in Neural Information Processing Systems},
  volume={35},
  pages={25464--25477},
  year={2022}
}

@article{yuan2022revisiting,
  title={Revisiting optimal convergence rate for smooth and non-convex stochastic decentralized optimization},
  author={Yuan, Kun and Huang, Xinmeng and Chen, Yiming and Zhang, Xiaohan and Zhang, Yingya and Pan, Pan},
  journal={Advances in Neural Information Processing Systems},
  volume={35},
  pages={36382--36395},
  year={2022}
}

@article{huang2026accelerated,
  title={An accelerated distributed stochastic gradient method with momentum},
  author={Huang, Kun and Pu, Shi and Nedi{\'c}, Angelia},
  journal={Mathematical Programming},
  volume={215},
  number={1--2},
  pages={193--236},
  year={2026},
  publisher={Springer},
  doi={10.1007/s10107-025-02217-0}
}

@article{erdos1959random,
  title={On Random Graphs. I},
  author={Erd{\H{o}}s, Paul and R{\'e}nyi, Alfr{\'e}d},
  journal={Publicationes Mathematicae Debrecen},
  volume={6},
  pages={290--297},
  year={1959}
}

@article{you2025distributed,
  title={Distributed learning over arbitrary topology: Linear speed-up with polynomial transient time},
  author={You, Runze and Pu, Shi},
  journal={arXiv preprint arXiv:2503.16123},
  year={2025}
}

@article{liang2025understanding,
  title={Understanding the influence of digraphs on decentralized optimization: Effective metrics, lower bound, and optimal algorithm},
  author={Liang, Liyuan and Huang, Xinmeng and Xin, Ran and Yuan, Kun},
  journal={SIAM Journal on Optimization},
  volume={35},
  number={3},
  pages={1570--1600},
  year={2025},
  publisher={SIAM},
  doi={10.1137/24M1657341}
}

@article{nedic2025ab,
  title={{AB/Push-Pull} method for distributed optimization in time-varying directed networks},
  author={Nedi{\'c}, Angelia and Nguyen, Duong Thuy Anh and Nguyen, Duong Tung},
  journal={Optimization Methods and Software},
  volume={40},
  number={5},
  pages={1044--1071},
  year={2025},
  publisher={Taylor \& Francis},
  doi={10.1080/10556788.2023.2261602}
}

@inproceedings{liang2025row,
  title={Achieving linear speedup and near-optimal complexity for decentralized optimization over row-stochastic networks},
  author={Liang, Liyuan and Chen, Xinyi and Luo, Gan and Yuan, Kun},
  booktitle={Proceedings of the 42nd International Conference on Machine Learning},
  series={Proceedings of Machine Learning Research},
  volume={267},
  pages={37100--37130},
  year={2025},
  publisher={PMLR},
  url={https://proceedings.mlr.press/v267/liang25c.html}
}

@article{kluger2026nearoptimal,
  title={Near-optimal decentralized stochastic convex optimization over networks},
  author={Kluger, Nitai and Attia, Amit and Koren, Tomer},
  journal={arXiv preprint arXiv:2606.04757},
  year={2026},
  doi={10.48550/arXiv.2606.04757}
}

@article{sun2026accelerated,
  title={Accelerated decentralized stochastic gradient descent for strongly convex optimization},
  author={Sun, Ming and Yuan, Kun},
  journal={arXiv preprint arXiv:2606.07496},
  year={2026},
  doi={10.48550/arXiv.2606.07496}
}

@article{you2026stochastic,
  title={Stochastic Push--Pull for Decentralized Nonconvex Optimization},
  author={You, Runze and Pu, Shi},
  journal={IEEE Transactions on Signal Processing},
  volume={74},
  pages={1383--1398},
  year={2026},
  publisher={IEEE},
  doi={10.1109/TSP.2026.3675119}
}

@techreport{krizhevsky2009learning,
  title={Learning multiple layers of features from tiny images},
  author={Krizhevsky, Alex and Hinton, Geoffrey E.},
  year={2009},
  institution={University of Toronto}
}

\end{document}